\documentclass[11pt]{article}
\usepackage{geometry}
\geometry{a4paper,left=20mm,right=20mm, top=20mm, bottom=20mm}
\usepackage{color}
\usepackage{indentfirst}
\usepackage{algorithm}
\usepackage{algorithmic}
\usepackage{amsthm}
\usepackage{subfigure}
\usepackage{graphicx}
\usepackage{amsmath}
\usepackage{amssymb}
\usepackage{epstopdf}
\usepackage{arydshln}
\usepackage{blkarray}
\usepackage{calc}

\usepackage [latin1]{inputenc} 
\usepackage[T1]{fontenc} 
\usepackage{multirow}
\usepackage{fancyhdr}
\usepackage[title]{appendix}
\usepackage{enumerate}
\usepackage{tikz}
\usetikzlibrary{trees}
\usepackage{pifont}
\usepackage[numbers,sort&compress]{natbib}
\usepackage{caption}
\usepackage{hyperref}
\newtheorem{Lemma}{\textbf{Lemma}}
\newtheorem{Theorem}{\textbf{Theorem}}
\newtheorem{Example}{\textbf{Example}}

\newtheorem{Assumption}{\textbf{Assumption}}

\newtheorem{remark}{Remark}


\fancyhead{} 
\fancyfoot[L]{} 
\fancyfoot[C]{} 
\fancyfoot[R]{\thepage} 
\date{\today}



\title{Quantum-inspired algorithm for truncated total least squares solution
\thanks{This work was supported by the National Key Research and Development Program of China No.\ 2021YFA1000600, the National Natural Science Foundation of China under Grant No.\ 11571265.}}

\author{Qian Zuo\footnotemark[2]\
\and Yimin Wei\footnotemark[3]
\and Hua Xiang\footnotemark[2] \footnotemark[4]}

\begin{document}
\maketitle
\renewcommand{\thefootnote}{\fnsymbol{footnote}}
\footnotetext[2]{School of Mathematics and Statistics, Wuhan University, Wuhan 430072, China.}
\footnotetext[3]{School of Mathematical Sciences and Shanghai Key Laboratory of Contemporary Applied Mathematics, Fudan University, Shanghai 200433, China.}
\footnotetext[4]{Hubei Key Laboratory of Computational Science, Wuhan University, Wuhan 430072, China.}
\footnotetext{E-mail addresses: {\tt zuoqian@whu.edu.cn(Q.Zuo)},\, {\tt ymwei@fudan.edu.cn(Y.Wei)},\, {\tt hxiang@whu.edu.cn(H.Xiang)}.}

\begin{abstract}
Total least squares (TLS) methods have been widely used in data fitting. Compared with the least squares method, for TLS problem we takes into account not only the observation errors, but also the errors in the measurement matrix. This is more realistic in practical applications. 
For the large-scale discrete ill-posed problem $Ax \approx b$, we introduce the quantum-inspired techniques to approximate the truncated total least squares (TTLS) solution. We analyze the accuracy of the quantum-inspired truncated total least squares algorithm and perform numerical experiments to demonstrate the efficiency of our method.
{\small
\begin{description}
\item[{\bf Keywords:}] Total least squares problems, truncated total least squares, sample model, randomized algorithms, quantum-inspired algorithm
\end{description}
}
\end{abstract}

\section{Introduction}
We consider the discrete ill-posed linear system $Ax \approx b$, $A \in \mathbb{R}^{m \times n}$ and $m \geq n$. The total least squares (TLS) problem can be formulated as \cite{SVHJV91}
\begin{equation}\label{QI:TLS:Eq:tls}
 \{x_{\rm TLS}, E_{\rm TLS}, f_{\rm TLS}\} := \mathop{\rm argmax} \limits_{x,E,f} \|E,F\|_{F} \ \ \ \ s.t. \ \ (A+E)x = b + f,
\end{equation}
where $E$ denotes the errors in the observation matrix $A$, $f$ denotes the errors in the observation vector $b$, and $\| \cdot \|_{F}$ represents the Frobenius matrix form.

Golub and Van Loan first proposed the concept of total least squares problem in \cite{GVL80}, and it also has been known as signal processing, automatic control, physics, astronomy, biology, statistics, economics, etc. \cite{SVHJV91,SVHPL02,BH91,SVH97,SVH04}. Compared with the least squares (LS) problem, the TLS solution not only includes the errors $f$ of observation vector $b$, but also errors $E$ from the measurement matrix $A$ of the variables. The more relationships between the LS and TLS problems can be seen in \cite{SVHJV91,GVL80,WMS92}. Besides, for the condition numbers of the total least squares problem have been considered in \cite{BG11,LJ11,XXW17,ZLWQ09,CT09,GTI13,MDB21,JL13,ZY19}. The concept of the core problem is proposed by Paige and Strako\v{s} in \cite{PS06} and used to find the minimum norm solution of the TLS problem.
For the TLS problems with multiple right-hand sides, i.e., $AX \approx B$, it has been considered in \cite{SVHJV91,HPSSV11,HPS13,LLW22,LJW22,ZMW17,HPSSV15,MZW20,HPS16}.
To name just a few, Van Huffel and Vandewalle give the generalizations to the nongeneric and multiple right-hand sides problems in \cite{SVHJV91}.
In \cite{LJW22}, Liu, Jia and Wei derive an explicit solution for the multidimensional total least squares problem with linear equality constraints (TLSE) problem and present the general formulae of its condition numbers and their computable upper bounds. In this work, we focus on the TLS problem with single right-hand sides, i.e., $Ax \approx b$.

The classical method to compute the TLS problem is based on SVD of the augment matrix $C =[A,\,b]$. Let $A \in \mathbb{R}^{m \times n}$ and $b \in \mathbb{R}^{m}$ with $m \geq n$. Suppose that the SVDs of $C$ and $A$ as follows, respectively.
\begin{equation}\label{QI:TLS:Eq:SVDAb}
    \begin{aligned}
        U^{T} C V & = \Sigma = {\rm diag}\{\sigma_1, \cdots, \sigma_t\}, \\
        {U}^{T}_{A} A V_{A} & = \Sigma_{A} = {\rm diag}\{\sigma^{A}_1, \cdots, \sigma^{A}_n\},
    \end{aligned}
\end{equation}
where $t = {\rm min}\{m,n+1\}$, $U \in \mathbb{R}^{m \times (n+1)}$, $V \in \mathbb{R}^{(n+1) \times (n+1)}$ are orthonormal and $\Sigma \in \mathbb{R}^{(n+1) \times (n+1)}$ is a diagonal matrix. Partition $U$, $V$ and $\Sigma$ as follows.
\begin{equation}\label{QI:TLS:Eq:SVDAb}
U = [U_1, u_{n+1}], \ \
V=
	\left[
	\begin{array}{cc}
	V_{11}& v_{12} \\
	v_{21}& v_{22}
	\end{array}
	\right], \ \
\Sigma=
	\left[
	\begin{array}{cc}
	\Sigma_{1}& 0 \\
	0         & \sigma_{n+1}
	\end{array}
	\right],
\end{equation}
where $u_{n+1} \in \mathbb{R}^{m}$, $v_{12} \in \mathbb{R}^{n}$, $v_{21}^{T} \in \mathbb{R}^{n}$. The genericity condition is
$\sigma^{A}_n > \sigma_{n+1}$, which ensures the existence and the uniqueness of the TLS solution \cite{GVL80}. The TLS solution is
\begin{equation}\label{QI:TLS:Eq:so1}
     x_{\rm TLS} = - \frac{v_{12}}{v_{22}}, \ \
     \left[
       \begin{array}{c}
         x_{\rm TLS} \\
         -1 \\
       \end{array}
     \right]
     = - \frac{v_{n+1}}{v_{22}}.
\end{equation}

However, in practice applications, if $\sigma^{A}_n$ coincides with $\sigma_{n+1}$, the TLS problem may still have a solution, but the solution is no longer unique. In this paper, we focus on the minimum norm TLS solution in the case where
\begin{equation}\label{QI:TLS:Eq:sqsn}
     \sigma^{A}_q > \sigma_{q+1} = \sigma_{q+2} = \cdots = \sigma_{n+1}, \ \ q \leq n,
\end{equation}
which implies that $\sigma_q > \sigma_{q+1} = \sigma_{q+2} = \cdots = \sigma_{n+1}$. Based on Eq.\eqref{QI:TLS:Eq:SVDAb}, we use a modified partition form in the following
\begin{equation}\label{QI:TLS:Eq:PVq}
\begin{array}{c@{\hspace{-5pt}}l}
V =
	\left[
	\begin{array}{cc}
	V_{11}& \quad V_{12} \\
	v_{21}& \quad v_{22}
	\end{array}
	\right]
& \begin{array}{l}
\, n\\
\, 1 ,
\end{array}\\
\begin{array}{cc}
\qquad \quad q &
\, n+1-q
\end{array}
\end{array}
\end{equation}
where $V_{11} \in \mathbb{R}^{n \times q}$, $v_{21}^{T} \in \mathbb{R}^{q}$, and $v_{22}$ is a row vector with full rank. According to the theorem 3.7 in \cite{SVHJV91}, the classical TLS solution has more than one solution in the condition \eqref{QI:TLS:Eq:sqsn}. Based on the the partition form of $V$ in \eqref{QI:TLS:Eq:PVq}, we can select the minimum norm TLS solution. The determination of parameter $q$ is a difficult problem in practical. For ill-posed problems that the large singular values dominated the solution, we generally select the parameter $q$ so that the last $n+1-q$ singular values of $C$ are very small. But, the smallest singular values of $C$ are rarely coincide \cite{SVHJV91}, it is realistic to define an error bound such that all singular values $\sigma_{i}$, satisfying $|\sigma_{i} - \sigma_{n+1}| < \epsilon$, are considered to coincide with $\sigma_{n+1}$.
Then, discarding the smallest singular values, and combining with the partition form \eqref{QI:TLS:Eq:PVq}, we can obtain the truncated total least squares (TTLS) solution \cite{SVHJV91}.

Due to the computational complexity, it is widely known that it may be unrealistic or extremely expensive to compute SVD for large-scale discrete ill-posed problems. Huffel \cite{SVH90} presents a partial SVD method based on Householder transformation or Lanczos bidiagonalization, see \cite{OS81,AB83} and the references therein. Gloub \emph{et al.} \cite{FGH97,GHO99} propose to use Tikhonov regularization to solve TLS.
In \cite{ZH22}, Zare and Hajarian propose an efficient Gauss-Newton algorithm for solving regularized total least squares problems. Recently, many types of randomized algorithm have been presented to calculate the low-rank approximation. The aim is to gain a projection by a random matrix \cite{MSM14,RT08,WLRT08} or random sampling \cite{AMT10,MRT11} with preconditioning \cite{CRT11,RR12}, see details in \cite{HMT11}. In \cite{XZ13,XZ15}, Xiang and Zou propose a randomized algorithm for solving the regularized LS solutions of large-scale discrete inverse problems. In \cite{WXZ16}, Wei, Xie and Zhang propose a regularization method, combining Tikhonov regularization in general form with the truncated generalized singular value decomposition (GSVD). Then the randomized algorithms are adopted to implement the truncation process. This randomized GSVD for the regularization of the large-scale ill-posed problems can achieve good accuracy with less computational time and memory requirement than the classical regularization methods. In \cite{JY18}, Jia and Yang propose a modified truncated randomized algorithms for the large-scale discrete ill-posed problems with general-form regularization.
Randomized algorithms are also utilized for the total least squares problems by Xie \emph{et al.} \cite{XXW19} and for core reduction problem by Zhang \emph{et al.} \cite{ZW20}.
All of these randomized algorithms can reduce the running time and still maintain good accuracy.

Based on the quantum simulation of resonant transitions, Wang and Xiang propose a quantum algorithm for total least squares problems in \cite{WX19}, and it can achieve at least polynomial speedup over the known classical algorithms. In terms of quantum algorithms, many quantum machine learning algorithms use quantum random access memory (QRAM) \cite{GLM08} as a tool to prepare quantum states. Due to the strict physical conditions required to maintain the coherence of quantum, the theoretical model of QRAM has not been well realized in practice. Compared with the quantum recommendation systems \cite{KP16},
Tang \cite{ET18} proposes a quantum-inspired classical algorithm for recommendation systems within logarithmic time by using the efficient low-rank approximation techniques of Frieze, Kannan and Vempala (FKV) algorithm \cite{FKV04}. Motivated by the dequantizing techniques, other papers are also proposed to deal with some low-rank matrix operations, such as matrix inversion, singular value transformation, non-negative matrix factorization, support vector machine, general minimum conical hull problems, principal component analysis, canonical correlation analysis, statistical leverage scores \cite{CGLLTW20,JGS19,CLS19,DBH21,DHLT20,ET182,KMKM21,ADBL20,ZX21}.
Dequantizing techniques in those algorithms involve two technologies, the Monte-Carlo singular value decomposition and sampling techniques, which could efficiently simulate some special operations on low-rank matrices.

Inspired by the dequantizing techniques, we propose a quantum-inspired algorithm for solving the TLS problem. First, the sample model and data structure are given in Section \ref{sec:QI:TLS:Sample:Model}, including vector and matrix sample model. Second, we present the quantum-inspired truncated total least squares (QiTTLS) algorithm to compute the minimum norm solution in Section \ref{sec:QI:TLS:Randomized:Algorithm}. Next, we demonstrate the algorithm analysis in Section \ref{sec:QI:TLS:Algorithm:Analysis}. Finally, we demonstrate the effectiveness behaviors of the proposed algorithms with the numerical examples from Hansen's Regularization Tools \cite{PCH07} in Section \ref{sec:QI:TLS:experiments}, and conclusions are made in Section \ref{sec:QI:TLS:conclusions}.

Throughout the paper, For a matrix $A \in \mathbb{R}^{m \times n}$, $A^T$ is the transpose of $A$, $\|A\|_{2}$ and ${\rm Tr}(A)$ denote the spectral norm and the trace of $A$, respectively. The symbol $A^{\dag}$ represents the Moore-Penrose inverse of $A$.
For a vector $v \in \mathbb{R}^{m}$, $\|v\|_{2}$ and $\|v\|_{\infty}$ also denote the Euclidean norm and the infinity norm, respectively. For any $1 \leq i \leq m$ and $1 \leq j \leq n$, denote the $i$th row of $A$ as $A_{i,:}$, and the $j$th column of $A$ as $A_{:,j}$. The $(i,j)$-entry of $A$ is denoted by $A_{i,j}$. Similarly, $v_{i}$ stands for the $i$th entry of a vector $v$. Let $I \in \mathbb{R}^{m \times m}$ be identity matrix.

\section{Sample model and data structure}\label{sec:QI:TLS:Sample:Model}
Throughout the quantum-inspired algorithm, the key is focusing on how to input the given matrix and vector. Obviously, it is not possible to load the entire matrix and vector into memory because it costs at least linear time. In this work, we assume that matrices and vectors are well prepared, and can be sampled according to some natural probability distributions, which are found in many applications of machine learning \cite{KP16,ET18,FKV04,CGLLTW20,JGS19}. Let us start from the sampling assumption of the matrix.

\begin{Assumption}\label{QI:TLS:asu1}
Given a matrix $M \in \mathbb{R}^{m \times n}$, the following conditions hold.
\begin{enumerate}[1)]
  \item We can sample a row index $i \in [m]$ of $M$, where the probability of row $i$ being chosen is
  \begin{equation}\label{QI:TLS:P}
    P_{i} = \frac{\|M_{i,:}\|^2_{2}}{||M||_F^2}.
  \end{equation}
  \item For all $i \in [m]$, we can sample an index $j \in [n]$ according to $\mathcal{D}_{M_{i,:}}$, i.e.,  the probability of $j$ being chosen is
  \begin{equation}\label{QI:TLS:DM}
    \mathcal{D}_{M_{i,:}}(j) = \frac{|M_{i,j}|^2}{||M_{i,:}||^2_{2}}.
  \end{equation}
\end{enumerate}
\end{Assumption}

Frieze \emph{et al.} \cite{FKV04} use these similar assumption to present a sublinear algorithm for seeking low-rank approximation. In \cite{ET18}, Tang gives a quantum-inspired classical algorithm for recommendation systems by using these similar assumption. As pointed out in \cite{CGLLTW20,CLS19,ET182}, there is a low-overhead data structure that satisfies the sampling assumption. We first describe the data structure for a vector, then for a matrix.

\begin{Lemma}(Vector sample model)(\cite{ET182})\label{QI:SLSMC:lemma:sample:V}
There exists a data structure storing a vector $v \in \mathbb{R}^{n}$ with $s$ nonzero entries in $O(s{\rm log}\,n)$ space, with the following properties:
\begin{enumerate}[a)]
  \item Querying and updating an entry of $v$ in $O({\rm log}\,n)$ time;
  \item Finding $||v||^2_{2}$ in $O(1)$ time;
  \item Sampling from $\mathcal{D}_{v}$ in $O({\rm log}\,n)$ time.
\end{enumerate}
\end{Lemma}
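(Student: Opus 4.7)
The plan is to realize the vector $v$ by a balanced binary search tree (BST), for example a red-black tree, whose leaves are in bijection with the nonzero entries of $v$. Each leaf keyed by an index $i$ with $v_i\neq 0$ stores the pair $(i,v_i)$ together with the squared magnitude $|v_i|^2$. Each internal node stores the sum of $|v_j|^2$ taken over all leaves in its subtree, so that in particular the root stores $\|v\|_2^2$. Since the tree has $s$ leaves and therefore $O(s)$ nodes, and each node stores $O(\log n)$ bits (for the index) plus a constant number of real numbers, the total space is $O(s\log n)$.

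For property (b), I would simply read off the value cached at the root, which gives $\|v\|_2^2$ in $O(1)$ time. For property (a), querying the $i$th entry is a standard search in a balanced BST on the key $i$, costing $O(\log n)$; updating $v_i$ to some new value $v_i'$ is done by first locating (or inserting) the leaf in $O(\log n)$ time, replacing its cached square by $|v_i'|^2$, and then walking back to the root adjusting each ancestor's stored sum by the difference $|v_i'|^2-|v_i|^2$. Because the tree is balanced the ancestor chain has length $O(\log n)$, so the whole update costs $O(\log n)$; insertions or deletions that require rotations can be maintained in the same time bound by recomputing the cached sums of the $O(1)$ nodes touched by each rotation.

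Property (c), sampling from $\mathcal{D}_v$, is implemented by a top-down descent from the root. At an internal node with children having subtree sums $s_L$ and $s_R$, I would draw a single uniform random bit biased by $s_L/(s_L+s_R)$ and recurse into the corresponding child; upon reaching a leaf at index $i$, I would output $i$. By the chain-rule telescoping of the conditional probabilities, the leaf at index $i$ is produced with probability $|v_i|^2/\|v\|_2^2=\mathcal{D}_v(i)$, as required. The depth is $O(\log n)$ and each step uses $O(1)$ arithmetic operations on the cached sums, for a total of $O(\log n)$.

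The main thing to guard against is keeping the height logarithmic under a potentially adversarial sequence of updates; this is why one must use a self-balancing variant (red-black, AVL, or a segment-tree-style structure indexed by $[n]$ with lazy allocation of nonzero branches) rather than a plain BST. Once balancing is in place, the sampling analysis is just the probability calculation above and the space/time bounds follow from standard BST accounting, so I expect no other real obstacle.
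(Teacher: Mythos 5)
Your construction is correct and is exactly the one the paper relies on: the paper does not write out a proof but cites Tang's result and illustrates it with the same binary tree whose leaves hold the squared entries (plus signs), whose internal nodes hold subtree sums of squares, and whose root holds $\|v\|_2^2$, with sampling done by the biased top-down descent you describe. Your additional remarks on restricting leaves to the $s$ nonzero entries for the $O(s\log n)$ space bound and on maintaining balance under updates are the standard accounting and do not diverge from the intended argument.
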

In \cite{ET18}, Tang gives a similar binary search tree (BST) diagram to analyze this data structure, as shown in the following figure.
\begin{figure}[H]
\centering
\begin{tikzpicture}[level distance=1.5cm,
  level 1/.style={sibling distance=3.5cm},
  level 2/.style={sibling distance=1.8cm}]
  \node {$||v||^2_{2}$}
    child {node {$v_1^2 + v_2^2$}
      child {node {$v_1^2$}
        child {node {${\rm sgn}(v_1)$}}
        }
      child {node {$v_2^2$}
        child {node {${\rm sgn}(v_2)$}}
        }
    }
    child {node {$v_3^2 + v_4^2$}
    child {node {$v_3^2$}
        child {node {${\rm sgn}(v_3)$}}
        }
      child {node {$v_4^2$}
        child {node {${\rm sgn}(v_4)$}}
        }
    };
\end{tikzpicture}
\caption{BST data structure for $v = (v_1, v_2, v_3, v_4)^{T} \in \mathbb{R}^{4}$.}
\end{figure}
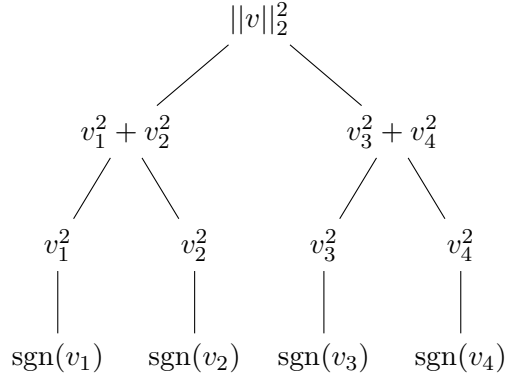

\begin{Lemma}(Matrix sample model)(\cite{ET182})\label{QI:TLS:lemma:sample:M}
Given a matrix $A \in \mathbb{R}^{m \times n}$ with $s$ nonzero entries in $O(s{\rm log}\,mn)$ space,
with the following properties:
\begin{enumerate}[a)]
  \item Querying and updating an entry of $A$ in $O({\rm log}\,mn)$ time;
  \item Sampling from $A_{i,:}$ for any $i \in [m]$ in $O({\rm log}\,n)$ time;
  \item Sampling from $A_{:,j}$ for any $j \in [n]$ in $O({\rm log}\,m)$ time;
  \item Finding  $||A||_F$, $\|A_{i,:}\|_{2}$ and $\|A_{:,j}\|_{2}$ in $O(1)$ time.
\end{enumerate}
\end{Lemma}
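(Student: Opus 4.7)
The plan is to bootstrap directly from the vector sample model of Lemma~\ref{QI:SLSMC:lemma:sample:V}, by layering one BST on top of another so that rows, columns, and the whole matrix each expose the same query/sample interface. Concretely, I maintain two parallel structures. The \emph{row structure} keeps, for every row index $i\in[m]$ with $A_{i,:}\neq 0$, an instance of the vector data structure from Lemma~\ref{QI:SLSMC:lemma:sample:V} on $A_{i,:}\in\mathbb{R}^{n}$; these $m$ (or fewer) per-row trees are indexed by an outer BST whose leaves hold the squared row norms $\|A_{i,:}\|_{2}^{2}$ and whose internal nodes store partial sums, exactly as in Lemma~\ref{QI:SLSMC:lemma:sample:V}. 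The \emph{column structure} is the analogous object obtained by transposing roles of rows and columns. Together they occupy $O(s\log mn)$ space, since each nonzero contributes one leaf to a row-tree and one leaf to a column-tree, and each stored index or value is $O(\log mn)$ bits.

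Given these two structures, properties (b)--(d) are almost immediate. For (b), sampling from $A_{i,:}$ is just invoking Lemma~\ref{QI:SLSMC:lemma:sample:V}(c) on the inner row-$i$ tree, costing $O(\log n)$; (c) is symmetric in the column structure and costs $O(\log m)$. For (d), $\|A\|_{F}^{2}$ is stored at the root of the outer row-tree (or column-tree), and $\|A_{i,:}\|_{2}^{2}$ (resp.\ $\|A_{:,j}\|_{2}^{2}$) is stored at the root of the corresponding inner tree, so each is read in $O(1)$. For the query part of (a), locating $A_{i,j}$ descends the outer row-tree to the row-$i$ inner tree in $O(\log m)$ and then applies Lemma~\ref{QI:SLSMC:lemma:sample:V}(a) for $O(\log n)$, totalling $O(\log mn)$.

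The nontrivial piece, and the main obstacle I expect, is the update part of (a): a single write to entry $A_{i,j}$ must remain consistent across both the row and the column structure. I would handle it by performing four coordinated edits: update the leaf for $A_{i,j}$ in the row-$i$ inner tree and propagate the change in its squared value up to the root (Lemma~\ref{QI:SLSMC:lemma:sample:V}(a) gives $O(\log n)$); then bubble the resulting delta in $\|A_{i,:}\|_{2}^{2}$ up the outer row-tree in $O(\log m)$; and symmetrically update the column-$j$ inner tree and the outer column-tree in $O(\log m+\log n)$. The only subtlety is when the update changes a zero entry into a nonzero one, or vice versa: this requires inserting or deleting a leaf in the corresponding inner tree, and possibly inserting or removing an entire per-row or per-column sub-tree in the outer structure. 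By implementing both layers as self-balancing BSTs (e.g.\ red-black trees), each insert/delete stays within $O(\log mn)$, and the partial-sum invariants are restored by the same rotate-and-update scheme used for the value-edit case. Summing, every update costs $O(\log mn)$, which gives (a).

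Once the construction and its invariants are in place, correctness of the sampling procedures reduces to the correctness already proved for Lemma~\ref{QI:SLSMC:lemma:sample:V}, since each call in (b)--(d) is literally a call into an inner vector-sample-model tree whose stored partial sums are, by the update invariant, always equal to the current squared norms of the corresponding row or column.
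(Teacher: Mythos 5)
Your construction is correct, but note that the paper does not actually prove this lemma: it is quoted from the cited reference, and the text only accompanies it with an illustrative binary-search-tree figure for a $4\times 2$ matrix. Moreover, that figure shows only a \emph{single} column-oriented tree (root $\|A\|_F^2$, children $\|A_{:,j}\|_2^2$, then the per-column vector trees), which by itself supports properties (c) and (d) and column-wise queries but cannot deliver property (b), sampling from a row $A_{i,:}$ in $O(\log n)$ time. Your two-parallel-structure design (a row-oriented forest of vector BSTs under an outer tree of row norms, plus its transpose) is the standard construction from the cited literature and is exactly what is needed to make all four claims true simultaneously; your treatment of the update case, including the zero/nonzero leaf insertion and deletion, is also right. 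The one place your accounting needs care is property (b): if reaching the row-$i$ inner tree genuinely requires descending the outer BST, as you describe for the query in (a), then row sampling would cost $O(\log m+\log n)$ rather than the claimed $O(\log n)$. The fix is standard and harmless: store the outer tree as a complete binary tree in array (heap) layout, or keep an auxiliary array of pointers to the per-row roots, so that the root of the row-$i$ vector structure is addressable in $O(1)$; then your invocation of Lemma~\ref{QI:SLSMC:lemma:sample:V}(c) gives the stated $O(\log n)$ bound, and everything else in your argument goes through unchanged.
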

Follow the work in \cite{CGLLTW20}, here we present a similar BST diagram for a matrix $A \in \mathbb{R}^{4 \times 2}$ to analyze this data structure, as shown in the following figure.
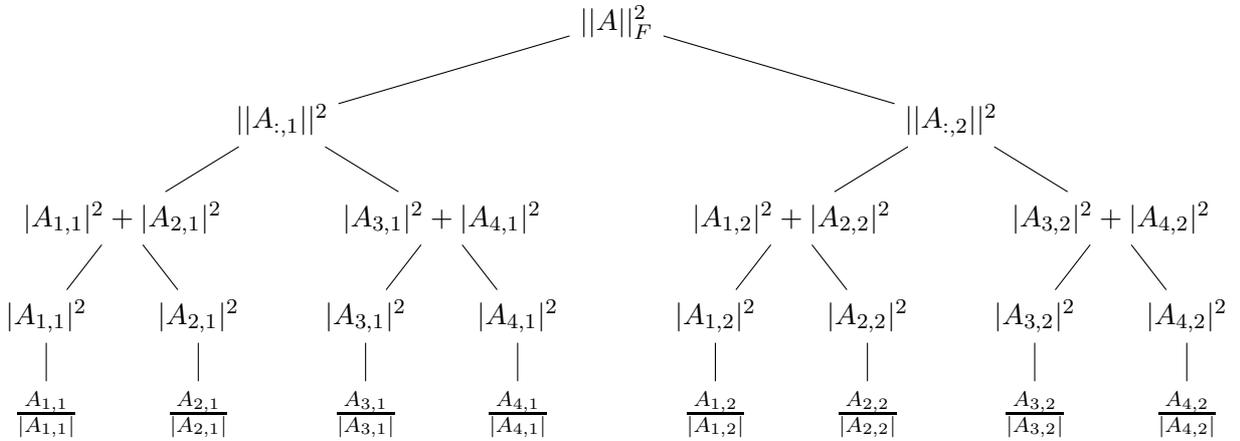
\begin{figure}[H]
\begin{tikzpicture}[level distance=1.3cm,
  level 1/.style={sibling distance=8.8cm},
  level 2/.style={sibling distance=4.2cm},
  level 3/.style={sibling distance=2cm}]
  \node {$||A||_F^2$}
    child {node {$||A_{:,1}||^2$}
            child {node {$|A_{1,1}|^2+|A_{2,1}|^2$}
                child {node {$|A_{1,1}|^2$}
                    child {node {$\frac{A_{1,1}}{|A_{1,1}|}$}}
                }
                child {node {$|A_{2,1}|^2$}
                    child {node {$\frac{A_{2,1}}{|A_{2,1}|}$}}
                }
        }
            child {node {$|A_{3,1}|^2+|A_{4,1}|^2$}
                child {node {$|A_{3,1}|^2$}
                    child {node {$\frac{A_{3,1}}{|A_{3,1}|}$}}
                }
                child {node {$|A_{4,1}|^2$}
                    child {node {$\frac{A_{4,1}}{|A_{4,1}|}$}}
                }
        }
    }
    child {node {$||A_{:,2}||^2$}
        child {node {$|A_{1,2}|^2+|A_{2,2}|^2$}
                child {node {$|A_{1,2}|^2$}
                    child {node {$\frac{A_{1,2}}{|A_{1,2}|}$}}
                }
                child {node {$|A_{2,2}|^2$}
                    child {node {$\frac{A_{2,2}}{|A_{2,2}|}$}}
                }
        }
        child {node {$|A_{3,2}|^2+|A_{4,2}|^2$}
                child {node {$|A_{3,2}|^2$}
                    child {node {$\frac{A_{3,2}}{|A_{3,2}|}$}}
                }
                child {node {$|A_{4,2}|^2$}
                    child {node {$\frac{A_{4,2}}{|A_{4,2}|}$}}
                }
        }
    };
\end{tikzpicture}
\caption{BST data structure for $A \in \mathbb{R}^{4 \times 2}$.}
\end{figure}

\section{Quantum-inspired algorithm}\label{sec:QI:TLS:Randomized:Algorithm}
There are already many efficient quantum-inspired classical algorithms, as discussed in the introduction. The main idea of quantum-inspired classical algorithms is follows. First, let us recall quantum algorithm, the key to quantum machine learning is quantum state preparation assumption. Given an input vector $v$, one can form a corresponding quantum state $|v\rangle$, QRAM provides support for many quantum machine learning (QML) algorithms from classical data to quantum data. Therefore, QML algorithms generally assume that some required quantum states are already prepared. In order to make the classical algorithm achieve nearly the same effect as the quantum algorithm. We need to use a similar data structure (defined in Section \ref{sec:QI:TLS:Sample:Model}) to satisfy state preparation assumption, which can also satisfy $\ell_2$-norm sampling assumption (defined in Section \ref{sec:QI:TLS:Sample:Model}). Second, for a given matrix $A \in \mathbb{R}^{m \times n}$, by the Eckhart-Young theorem in \cite{GL13}, the classical SVD can return a low-rank approximation $A_{(k)}$, which is the closest rank-$k$ matrix to $A$. However, the computational complexity is extremely expensive, which costs about $O(m^2 n)$.
Next, Frieze \emph{et al.} \cite{FKV04} propose a low approximation $A \widehat{V} \widehat{V}^{T}$ of $A$ in a linear time, which is an approximation projection onto the low-dimensional subspace spanned by $\widehat{V} \in \mathbb{R}^{m \times k}$, it needs about $O({\rm poly}(k,\frac{1}{\epsilon}, \log\,\frac{1}{\delta}))$, where $\epsilon$ and $\delta$ are error and failure probability, respectively. However, the right singular matrix $\widehat{V}$ is not column orthogonal matrix, it is not directly generalized to TLS. Drineas \emph{et al.} \cite{PDKM06} propose a linear time SVD algorithm to compute a low-rank matrix approximation, it outputs an approximation to the top $k$ singular values and the corresponding singular vectors. In fact, it is a modified FKV algorithm. Finally, combined the sample model with data structure and modified FKV algorithm, a quantum-inspired truncated total least squares (QiTTLS) algorithm is given. When the input matrix is low-rank, compared with the classical method, it can also achieve at least polynomial speedup.

\begin{algorithm} [H]
\caption{QiSVD: Quantum-inspired SVD algorithm.}\label{QI:TLS:QiTTLS:a1}
\begin{algorithmic}[1]
\REQUIRE The matrix $A \in \mathbb{R}^{m \times n}$ and vector $b$ satisfies the sample model and data structure,  $\epsilon > 0$, $k$ and $\delta \in (0,1)$. \\
\ENSURE The approximate left singular matrix $\hat{V}$.\\
\STATE Define $C = [A,\,b]$, $\xi = \frac{\epsilon}{2\epsilon + 4}$, $\alpha = \frac{\xi}{100 k^4}$, $\theta = \alpha \xi$ and $p = \left\lceil\frac{1}{\theta^2 \delta}\right\rceil$; \label{QI:TLS:QiTTLS:al:s0}
\STATE  Independently sample $p$ row indices $i_{1}, i_{2}, \cdots, i_{p}$ of $C \in \mathbb{R}^{m \times (n+1)}$ according to the probability distribution $\{P_{1}, P_{2}, \cdots, P_{m}\}$ defined in assumption \ref{QI:TLS:asu1}; \label{QI:TLS:QiTTLS:al:s1}
\STATE  Let $S \in \mathbb{R}^{p \times (n+1)}$ be the matrix formed by the normalized rows $\frac{{C}_{i_{t},:}}{\sqrt{pP_{i_{t}}}}$ for $t \in [p]$, i.e., $S_{t,:} = \frac{{C}_{i_{t},:}}{\sqrt{pP_{i_{t}}}}$; \label{QI:TLS:QiTTLS:al:s2}
\STATE Independently sample $p$ column indices $j_{1}, j_{2}, \cdots, j_{p}$ of $S \in \mathbb{R}^{p \times (n+1)}$ according to the probability distribution $\{P'_{1}, P'_{2}, \cdots, P'_{n+1}\}$, where $P'_{j} = \sum\limits_{t=1}^{p} \frac{\mathcal{D}_{C_{i_{t},:}}(j)}{p}$, and $i_{1}, i_{2}, \cdots, i_{p}$ are the row indices in step \ref{QI:TLS:QiTTLS:al:s1}; \label{QI:TLS:QiTTLS:al:s3}
\STATE Let $W \in \mathbb{R}^{p \times p}$ be the matrix formed by the normalized columns $\frac{S_{:,j_{t}}}{\sqrt{pP'_{j_{t}}}}$ for $t \in [p]$, i.e., $W_{:,t} = \frac{S_{:,j_{t}}}{\sqrt{pP'_{j_{t}}}}$; \label{QI:TLS:QiTTLS:al:s4}
\STATE Compute the SVD of $W$. Denoted here by $W = \sum^{p}_{t = 1} \bar{\sigma}_{t} \bar{u}_{t} \bar{v}^{T}_{t}$, where $\bar{\sigma}_{1} \geq \bar{\sigma}_{2} \geq \cdots \geq \bar{\sigma}_{p}$; \label{QI:TLS:QiTTLS:al:s5}
\STATE Let $l = {\rm min}\{k, {\rm max}\{t \in [p]: \bar{\sigma}_{t}^2 \geq \alpha \|W\|_F^2\}\}$; \label{QI:TLS:QiTTLS:al:s6}
\STATE Calculate the approximate right singular values $\hat{V} = S^{T} \bar{U} \bar{\Sigma}^{-1} \in \mathbb{R}^{(n+1) \times l}$, where $\bar{U} = (\bar{u}_{1}, \bar{u}_{2}, \cdots, \bar{u}_{l}) \in \mathbb{R}^{p \times l}$ and $\bar{\Sigma} = (\bar{\sigma}_{1}, \bar{\sigma}_{2}, \cdots, \bar{\sigma}_{l}) \in \mathbb{R}^{l \times l}$. \label{QI:TLS:QiTTLS:al:s7}
\end{algorithmic}
\end{algorithm}

\begin{remark}
The original quantum-inspired algorithms have been proposed in \cite{ET18,FKV04,CGLLTW20,JGS19,CLS19,ET182}. Here we modify the quantum-inspired algorithm, which is the step \ref{QI:TLS:QiTTLS:al:s6} of QiSVD algorithm. By properly setting the parameter $\alpha$, the computational complexity is independent of the input matrix condition number, the details can be seen in Section \ref{sec:QI:TLS:Algorithm:Analysis}. Moreover, the sampling order is modified here.
First, select $p$ rows of matrix $C$ according to a probability  $P_{i} = \frac{\|C_{i,:}\|_{2}}{\|C\|_F^2}$, and scale each row to form a matrix $S$. Second, select $p$ columns of matrix $S$ according to a probability $P'_{j} = \sum\limits_{t=1}^{p} \frac{\mathcal{D}_{C_{i_{t},:}}(j)}{p}$, and scale each column to form a matrix $W$. Finally, the singular values of matrix $W$ and the corresponding left singular vectors are calculated by the SVD of $W$, then form the approximate right singular vectors of matrix C.
\end{remark}

\begin{remark}
In QiSVD algorithm, the truncation parameter $k$ is previous determined, it is estimated by the randomized algorithm with GCV \cite{GHW21}. The parameter $p$ is used to adjust the balance between the computational complexity and reliability.
\end{remark}

\begin{Lemma}
Given a matrix $A \in \mathbb{R}^{m \times n}$ and a vector $b \in \mathbb{R}^{m}$ with $C = [A,\,b]$ satisfying the sample model and data structure. In QiSVD algorithm, $W$ can be formed by sampling $p$ rows of $C$ with probabilities $\{P_i\}_{i=1}^{m}$ and $p$ columns of $S$ with probabilities $\{P'_j\}_{j=1}^{n+1}$ where $P_{i} = \frac{\|C_{i,:}\|_{2}}{\|C\|_F^2}$ and $P'_{j} = \sum\limits_{t=1}^{p} \frac{\mathcal{D}_{C_{i_{t},:}}(j)}{p}$. Then, it holds that
\begin{equation}\label{QI:TLS:Eq:CFSFWF}
    \|C\|_F = \|S\|_F = \|W\|_F.
\end{equation}
\end{Lemma}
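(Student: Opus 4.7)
The plan is to verify both equalities by direct computation of squared Frobenius norms, exploiting the fact that the sampling probabilities have been chosen precisely so that the normalization constants $\sqrt{pP_{i_t}}$ and $\sqrt{pP'_{j_t}}$ make each sampled-then-rescaled row or column contribute exactly $\|C\|_F^2/p$ to the Frobenius norm squared. In other words, the claim is an exact identity, not merely an unbiased-estimator statement.

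First I would handle $\|S\|_F = \|C\|_F$. Using $P_i = \|C_{i,:}\|_2^2/\|C\|_F^2$ (as in Assumption~\ref{QI:TLS:asu1}) and $S_{t,:} = C_{i_t,:}/\sqrt{pP_{i_t}}$, a one-line calculation gives $\|S_{t,:}\|_2^2 = \|C_{i_t,:}\|_2^2/(pP_{i_t}) = \|C\|_F^2/p$, so summing over $t\in[p]$ yields $\|S\|_F^2 = \|C\|_F^2$. This step is purely algebraic and uses nothing about the realized indices $i_t$.

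The main (though still short) step is to show that $P'_j$, as defined in Algorithm~\ref{QI:TLS:QiTTLS:a1}, coincides with the row-norm-squared sampling distribution on $S$, namely $P'_j = \|S_{:,j}\|_2^2/\|S\|_F^2$. To see this I would expand
\begin{equation*}
\|S_{:,j}\|_2^2 \;=\; \sum_{t=1}^{p} \frac{C_{i_t,j}^2}{pP_{i_t}} \;=\; \frac{\|C\|_F^2}{p}\sum_{t=1}^{p}\frac{C_{i_t,j}^2}{\|C_{i_t,:}\|_2^2} \;=\; \frac{\|C\|_F^2}{p}\sum_{t=1}^{p}\mathcal{D}_{C_{i_t,:}}(j),
\end{equation*}
and then divide by $\|S\|_F^2 = \|C\|_F^2$ to get exactly $P'_j$. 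This is the only place where the specific definition of $P'_j$ as an average of row-conditional distributions has to be used, and it is the most likely place to trip over a missing factor or mis-indexed sum.

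With $P'_j = \|S_{:,j}\|_2^2/\|S\|_F^2$ established, the argument for $\|W\|_F = \|S\|_F$ mirrors the first step: $W_{:,t} = S_{:,j_t}/\sqrt{pP'_{j_t}}$ gives $\|W_{:,t}\|_2^2 = \|S_{:,j_t}\|_2^2/(pP'_{j_t}) = \|S\|_F^2/p$, and summing over $t$ gives $\|W\|_F^2 = \|S\|_F^2$. Chaining the two equalities concludes the proof. I do not anticipate any serious obstacle; the care-points are (i) keeping the squares in the probabilities consistent with Assumption~\ref{QI:TLS:asu1} (the statement of the algorithm writes $P_i$ without an explicit square, but the sample model requires one), and (ii) correctly recognizing the algebraic identity that turns $P'_j$ into the column-norm distribution of $S$.
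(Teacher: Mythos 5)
Your proposal is correct and follows essentially the same three-step route as the paper's proof: compute $\|S\|_F^2=\|C\|_F^2$ directly from the row normalization, verify the identity $P'_j=\|S_{:,j}\|_2^2/\|S\|_F^2$, and then repeat the normalization computation for the columns of $W$. Your side remark about the missing square in the written form of $P_i$ is also a fair catch of a typo in the lemma statement.
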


\begin{proof}
For any row $t \in [p]$ of matrix $S$, using Eq.\eqref{QI:TLS:P} we have
\begin{equation}\label{QI:TLS:Eq:SFAF}
    \|S\|_F^2 = \sum_{t=1}^{p}\|S_{t,:}\|^2_{2} = \sum_{t=1}^{p}\left\|\frac{C_{i_{t},:}}{\sqrt{pP_{i_{t}}}}\right\|^2_{2} = \sum_{t=1}^{p}\frac{\|C_{i_{t},:}\|^2_{2}}{p\frac{\|C_{i_{t},:}\|^2_{2}}{||C||_F^2}} = \sum_{t=1}^{p}\frac{\|C\|_F^2}{p} = \|C\|_F^2.
\end{equation}

Combining the definition $\mathcal{D}_{C_{i,:}}(j)$ in assumption \ref{QI:TLS:asu1}, and using Eq.\eqref{QI:TLS:Eq:SFAF}, it follows that
\begin{equation}\label{QI:TLS:Eq:qj}
    P'_{j} = \sum\limits_{t=1}^{p} \frac{\mathcal{D}_{C_{i_{t},:}}(j)}{p} = \frac{1}{\|C\|_F^2}\sum\limits_{t=1}^{p} \frac{|C_{i_t,j}|^2}{p\frac{\|C_{i_t,:}\|^2_{2}}{\|C\|_F^2}} = \frac{1}{\|C\|_F^2}\sum\limits_{t=1}^{p} \frac{|C_{i_t,j}|^2}{pP_{i_t}} = \frac{\|S_{:,j}\|^2_{2}}{\|S\|_F^2}.
\end{equation}

For any column $t \in [p]$ of matrix $W$, it yields that
\begin{equation}\label{QI:TLS:Eq:WFSF}
    \|W\|_F^2 = \sum_{t=1}^{p}\|W_{:,t}\|^2_{2} = \sum_{t=1}^{p}\left\|\frac{S_{:,j_{t}}}{\sqrt{p P'_{j_{t}}}}\right\|^2_{2} = \sum_{t=1}^{p}\frac{\|S_{:,j_{t}}\|^2_{2}}{p\frac{\|S_{:,j_{t}}\|^2_{2}}{\|S\|_F^2}} = \sum_{t=1}^{p}\frac{\|S\|_F^2}{p} = \|S\|_F^2.
\end{equation}
The results follows from Eqs.\eqref{QI:TLS:Eq:SFAF} and \eqref{QI:TLS:Eq:WFSF}.
\hfill
\end{proof}

\begin{Lemma}(\cite{FKV04})\label{QI:TLS:Lemma:MTMNTN}
Given a matrix $M \in \mathbb{R}^{m \times n}$, let $P = \{P_{1}, P_{2}, \cdots, P_{m}\}$ be a probability distribution on $[m]$ such that $P_{i} = \frac{\|M_{i,:}\|^2}{\|M\|^2_F}, i \in [m]$. Let $(i_1, i_2, \cdots, i_{p})$ be a sequence of $p$ independent samples from $[m]$, each chosen according to distribution $P$. Let $N$ be the ${p \times n}$ matrix with
\begin{equation}\label{QI:TLS:Eq:NN}
    N_{t,:} = \frac{M_{i_t,:}} {\sqrt{p P_{i_t}}}, \ \ t \in [p].
\end{equation}
Then, For all $\theta > 0$, it holds that
\begin{equation}\label{QI:TLS:Eq:Pmmnn}
    {\rm Pr} (\|M^{T}M-N^{T}N\|_F \geq \theta \|M\|_F^2 ) \leq \frac{1}{\theta^2 p}.
\end{equation}
\end{Lemma}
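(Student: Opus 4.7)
The plan is to recognize $N^{T}N$ as a sum of i.i.d.\ random rank-one matrices that, thanks to the row-norm-squared sampling weights, is an unbiased estimator of $M^{T}M$; then apply Markov's inequality to $\|M^{T}M-N^{T}N\|_{F}^{2}$ after bounding its expectation.

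First I would write $N^{T}N=\sum_{t=1}^{p}X_{t}$, where $X_{t}=\frac{1}{pP_{i_{t}}}\,M_{i_{t},:}^{T}M_{i_{t},:}$. A one-line expectation check shows
\[
\mathbb{E}[X_{t}]=\sum_{i=1}^{m}P_{i}\cdot\frac{M_{i,:}^{T}M_{i,:}}{pP_{i}}=\frac{1}{p}\sum_{i=1}^{m}M_{i,:}^{T}M_{i,:}=\frac{M^{T}M}{p},
\]
so $\mathbb{E}[N^{T}N]=M^{T}M$. This is exactly where the specific choice $P_{i}=\|M_{i,:}\|_{2}^{2}/\|M\|_{F}^{2}$ earns its keep: the $P_{i}$ in the denominator cancels the row norm that would otherwise make the estimator biased or high-variance.

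Next I would compute the second moment. Because the $X_{t}$ are independent with common mean $M^{T}M/p$, the cross terms vanish and
\[
\mathbb{E}\bigl[\|N^{T}N-M^{T}M\|_{F}^{2}\bigr]=p\Bigl(\mathbb{E}[\|X_{1}\|_{F}^{2}]-\tfrac{1}{p^{2}}\|M^{T}M\|_{F}^{2}\Bigr).
\]
For the remaining expectation I would use $\|M_{i,:}^{T}M_{i,:}\|_{F}=\|M_{i,:}\|_{2}^{2}$ together with the definition of $P_{i}$ to get
\[
\mathbb{E}[\|X_{1}\|_{F}^{2}]=\sum_{i=1}^{m}P_{i}\cdot\frac{\|M_{i,:}\|_{2}^{4}}{p^{2}P_{i}^{2}}=\frac{\|M\|_{F}^{2}}{p^{2}}\sum_{i=1}^{m}\|M_{i,:}\|_{2}^{2}=\frac{\|M\|_{F}^{4}}{p^{2}}.
\]
Plugging back gives $\mathbb{E}\bigl[\|N^{T}N-M^{T}M\|_{F}^{2}\bigr]\leq\frac{\|M\|_{F}^{4}}{p}$, where I have discarded the nonnegative $-\|M^{T}M\|_{F}^{2}/p$ term.

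Finally, Markov's inequality applied to the nonnegative random variable $\|N^{T}N-M^{T}M\|_{F}^{2}$ yields
\[
\Pr\bigl(\|M^{T}M-N^{T}N\|_{F}\geq\theta\|M\|_{F}^{2}\bigr)=\Pr\bigl(\|M^{T}M-N^{T}N\|_{F}^{2}\geq\theta^{2}\|M\|_{F}^{4}\bigr)\leq\frac{1}{\theta^{2}p},
\]
which is the claim. The only slightly delicate step is the Frobenius second-moment calculation; everything else is bookkeeping, and the proof never needs any structural assumption on $M$ beyond the sampling distribution.
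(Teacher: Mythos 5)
Your proof is correct, and the paper itself gives no proof of this lemma (it simply cites \cite{FKV04}); your argument --- writing $N^{T}N$ as a sum of independent unbiased rank-one estimators, computing the variance $\mathbb{E}\|N^{T}N-M^{T}M\|_F^2 \leq \|M\|_F^4/p$ entrywise, and finishing with Markov's inequality --- is exactly the standard Frieze--Kannan--Vempala argument being cited. All the key computations (the cancellation of $P_i$, the identity $\|M_{i,:}^{T}M_{i,:}\|_F=\|M_{i,:}\|_2^2$, and the discarded $-\|M^{T}M\|_F^2/p$ term) check out.
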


\begin{Lemma}(\cite{ZX21})\label{QI:TLS:Lemma:MMTNNT}
Given a matrix $M \in \mathbb{R}^{m \times n}$, let $P' = \{P'_{1}, P'_{2}, \cdots, P'_{n}\}$ be a probability distribution distribution on $[n]$ such that $P'_{j} = \frac{\|M_{:,j}\|^2}{\|M\|^2_F}, j \in [n]$. Let $(j_1, j_2, \cdots, j_{p})$ be a sequence of $p$ independent samples from $[n]$, each chosen according to distribution $P'$. Let $N$ be the ${m \times p}$ matrix with
\begin{equation}\label{QI:TLS:Eq:NN}
    N_{:,t} = \frac{M_{:,j_t}}  {\sqrt{p P'_{j_t}}}, \ \ t \in [p].
\end{equation}
Then, for all $\theta > 0$, it yields that
\begin{equation}\label{QI:TLS:Eq:Pmmnn}
    {\rm Pr} (\|MM^{T} - NN^{T}\|_F \geq \theta \|M\|_F^2 ) \leq \frac{1}{\theta^2 p}.
\end{equation}
\end{Lemma}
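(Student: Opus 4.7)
The plan is to obtain the statement as the transpose-dual of the previous lemma, avoiding a second-moment computation. The two results are perfectly symmetric: row-norm sampling of $M$ approximates $M^{T}M$, while column-norm sampling of $M$ approximates $MM^{T}$, so applying the first result to the transposed matrix should yield the second.

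Concretely, I would set $M' := M^{T} \in \mathbb{R}^{n \times m}$. Rows of $M'$ are columns of $M$, so $\|M'_{j,:}\|_{2}^{2} = \|M_{:,j}\|_{2}^{2}$ and $\|M'\|_{F} = \|M\|_{F}$; hence the column-sampling distribution $P'$ used in this lemma coincides with the row-norm distribution for $M'$ used in the previous lemma. Next I would let $N' := N^{T} \in \mathbb{R}^{p \times m}$ and verify by an index check that
\[ N'_{t,:} = (N_{:,t})^{T} = \frac{M_{:,j_{t}}^{T}}{\sqrt{p P'_{j_{t}}}} = \frac{M'_{j_{t},:}}{\sqrt{p P'_{j_{t}}}}, \]
so $N'$ is precisely the matrix produced by the row-sampling construction of the previous lemma applied to $M'$ with the sample indices $(j_{1}, \dots, j_{p})$. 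Invoking that lemma on the pair $(M', N')$ gives, for every $\theta > 0$,
\[ {\rm Pr}\bigl(\|(M')^{T} M' - (N')^{T} N'\|_{F} \geq \theta \|M'\|_{F}^{2}\bigr) \leq \frac{1}{\theta^{2} p}, \]
and rewriting $(M')^{T} M' = M M^{T}$, $(N')^{T} N' = N N^{T}$, and $\|M'\|_{F} = \|M\|_{F}$ produces exactly the claimed bound.

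I do not expect any real obstacle: the argument is a symbolic duality, and the only care needed is keeping transposes straight when verifying that $N'$ coincides with the row-sampled matrix obtained from $M'$. If a fully self-contained proof were desired instead, one could bound $\mathbb{E}\,\|MM^{T} - NN^{T}\|_{F}^{2}$ entry-wise using independence of the column samples and then apply Markov's inequality, essentially reproducing the original proof of the previous lemma with the roles of rows and columns interchanged.
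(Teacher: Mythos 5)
Your derivation is correct. Note that the paper itself gives no proof of this lemma at all --- it is quoted verbatim from the reference [ZX21], just as the preceding row-sampling lemma is quoted from [FKV04] --- so there is no in-paper argument to compare against. Your transpose-duality reduction is the natural way to establish it from the row-sampling lemma: setting $M' = M^{T}$ and $N' = N^{T}$, the column-norm distribution on $M$ is exactly the row-norm distribution on $M'$, the matrix $N'$ is exactly the row-sampled matrix built from $M'$ with the same sample indices, and $(M')^{T}M' - (N')^{T}N' = MM^{T} - NN^{T}$ with $\|M'\|_{F} = \|M\|_{F}$, so the tail bound transfers verbatim. All the transposes check out, and the argument supplies a complete proof where the paper offers only a citation; your fallback of redoing the FKV second-moment computation with rows and columns interchanged would also work but is unnecessary given the clean duality.
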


To summarize of QiSVD algorithm, according to lemmas \ref{QI:TLS:Lemma:MTMNTN} and \ref{QI:TLS:Lemma:MMTNNT}, with probability at least $1 - \delta$, we have $\|C^{T}C - S^{T}S\|_{F} \leq \theta \|C\|^2_{F}$ and $\|SS^{T} - WW^{T} \|_{F} \leq \theta \|S\|^2_{F}$. That is, we can gain the approximate right singular matrix $\hat{V}$ of input matrix $C$, i.e., $\hat{V} = S^{T} \bar{U} \bar{\Sigma}^{-1} \in \mathbb{R}^{(n+1) \times l}$. Once obtained the approximate right singular matrix $\hat{V}$ of matrix $C$, then we can make a similar partition of $\hat{V}$. As a result, we can form the so-called TTLS solution. The detailed algorithm analysis is given as follows.

\begin{algorithm} [H]
\caption{QiTTLS: Quantum-inspired algorithm for TTLS.}\label{QI:TLS:QiTTLS:a2}
\begin{algorithmic}[1]
\REQUIRE The matrix $A \in \mathbb{R}^{m \times n}$ and vector $b$ satisfies the sample model and data structure, 
$\epsilon > 0$, $k$, $\delta \in (0,1)$, the parameter $l$ is defined in QiSVD algorithm, and $d  \leq l$.\\
\ENSURE $x_{\rm QiTTLS}$.\\
\STATE Compute $\hat{V} = {\rm QiSVD}(C)$, with the parameters $\epsilon$, $k$, $\delta$ and $C = [A,\,b]$; \label{QI:TLS:MQiTTLS:a2:s1}
\STATE Let $\hat{\mathbf{V}}_{11} = \hat{V}_{1:n,1:d}$, $\hat{\mathbf{v}}_{21}= \hat{V}_{n+1,1:d}$, and form the solution
$x_{\rm QiTTLS} = (\hat{\mathbf{V}}_{11}^T)^{\dag}\hat{\mathbf{v}}_{21}^{T}$.\label{QI:TLS:MQiTTLS:a2:s2}
\end{algorithmic}
\end{algorithm}

\begin{remark}
In most cases, quantum-inspired algorithm can estimate accurately the large singular values of large-scale matrices. Based on the above QiSVD algorithm, we can get a good approximation of the right singular vectors associated with the large singular values. Therefore, we choose the solution of TTLS as $x_{\rm QiTTLS} = (\hat{\mathbf{V}}_{11}^T)^{\dag}\hat{\mathbf{v}}_{21}^{T}$.
\end{remark}

\begin{algorithm} [H]
\caption{TTLS: Classical TTLS.}\label{QI:TLS:TTLS:a3}
\begin{algorithmic}[1]
\REQUIRE $A \in \mathbb{R}^{m \times n}$, $b \in \mathbb{R}^{m}$, $d \leq q$.\\
\ENSURE $x_{\rm TTLS}$.\\
\STATE Compute $U \Sigma V^{T} = {\rm SVD}(C)$, where $C = [A,\,b]$; \label{QI:TLS:TTLS:a3:s1}
\STATE Let $\mathbf{V}_{11} = V_{1:n,1:d}$, $\mathbf{v}_{21}= V_{n+1,1:d}$, and form the solution
$x_{\rm TTLS} = (\mathbf{V}_{11}^T)^{\dag} \mathbf{v}_{21}^{T}$.\label{QI:TLS:TTLS:a3:s2}
\end{algorithmic}
\end{algorithm}

\begin{remark}
For most ill-conditioned matrices in practical applications, the parameter $q$ in condition \eqref{QI:TLS:Eq:sqsn} is generally difficult to know in advance. Here we choose the truncation parameter
$d \leq q$, and other small singular values can be regarded as zero. Then, the relative error between the solution $x_{\rm QiTTLS}$ of the QiTTLS algorithm and the solution $x_{\rm TTLS}$ of the TTLS algorithm is analyzed. The proof will be given in theorem \ref{QI:TLS:Theorem:UI}.
\end{remark}

\section{Algorithm analysis}\label{sec:QI:TLS:Algorithm:Analysis}
\subsection{Error estimates}\label{sec:QI:TLS:Error:Alys}
\begin{Lemma}(Weyl's inequality \cite{HW12})\label{QI:SLSMC:Lemma:SmSn}
For two matrices $M \in \mathbb{C}^{m \times n}$, $N \in \mathbb{C}^{m \times n}$ and any $i\in [{\rm min}(m,n)]$, $|\sigma_{i}(M) - \sigma_{i}(N)| \leq \|M - N\|_{2}$.
\end{Lemma}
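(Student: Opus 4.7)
The plan is to prove this classical inequality via the Courant--Fischer--Weyl min-max characterization of singular values, which expresses $\sigma_i$ as an optimum of a Rayleigh-type quotient over subspaces. Specifically, I would use
\[
\sigma_i(M) \;=\; \max_{\substack{S \subseteq \mathbb{C}^n \\ \dim S = i}} \; \min_{\substack{x \in S \\ \|x\|_2 = 1}} \|Mx\|_2,
\]
and the analogous expression for $\sigma_i(N)$. This is the standard starting point for perturbation bounds on singular values, and it reduces the inequality to manipulations with the triangle inequality.

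First, I would observe that for every unit vector $x$, the triangle inequality in $\mathbb{C}^m$ gives $\|Mx\|_2 \le \|Nx\|_2 + \|(M-N)x\|_2 \le \|Nx\|_2 + \|M-N\|_2$. Next, I would fix an arbitrary $i$-dimensional subspace $S$ and take the minimum over unit vectors $x \in S$ on both sides, obtaining
\[
\min_{\substack{x \in S \\ \|x\|_2 = 1}} \|Mx\|_2 \;\le\; \min_{\substack{x \in S \\ \|x\|_2 = 1}} \|Nx\|_2 \;+\; \|M-N\|_2.
\]
Then I would take the maximum over all $i$-dimensional subspaces $S$ on both sides and invoke the min-max formula to conclude $\sigma_i(M) \le \sigma_i(N) + \|M-N\|_2$, i.e., $\sigma_i(M) - \sigma_i(N) \le \|M-N\|_2$.

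Finally, by symmetry (swapping the roles of $M$ and $N$ and using $\|N-M\|_2 = \|M-N\|_2$), the reverse bound $\sigma_i(N) - \sigma_i(M) \le \|M-N\|_2$ follows, and combining the two yields $|\sigma_i(M) - \sigma_i(N)| \le \|M-N\|_2$, as claimed. There is no serious obstacle here; the only point requiring a bit of care is the order of the $\min$ and $\max$ operations when passing the triangle-inequality bound through the min-max formula, but this is routine once the correct variational characterization is in hand. The inequality is independent of the index $i$ and holds uniformly for all $i \in [\min(m,n)]$.
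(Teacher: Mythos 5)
Your proof is correct: the min-max (Courant--Fischer) characterization of singular values combined with the triangle inequality is the canonical argument for Weyl's perturbation bound, and each step (passing the bound through the inner $\min$ and then the outer $\max$, followed by the symmetry argument) is sound. Note that the paper itself offers no proof of this lemma --- it is stated as a known result with a citation --- so there is nothing to compare against; your argument is the standard textbook derivation and fills that gap correctly.
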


\begin{Lemma}\label{QI:TLS:Lemma:V2TVF}
Given a matrix $A \in \mathbb{R}^{m \times n}$ and a vector $b \in \mathbb{R}^{m}$ satisfying the sample model and data structure, the parameters $(\epsilon, \delta, k)$ in the specified range of QiSVD algorithm. QiSVD algorithm outputs the approximate right singular matrix $\hat{V} \in \mathbb{R}^{(n+1) \times l}$, then with probability $1- \delta$, it holds that
\begin{equation}\label{QI:SLSMC:Eq:VFVR}
    \begin{aligned}
        \|\hat{V}^{T} \hat{V} \|_{2} & \leq 1 + \xi, \\
        \|\hat{V}\|^2_F & \leq k +  \sqrt{k} \xi, \\
        \|\hat{V}\|_{2} &  \leq \sqrt{1 + \xi}.
    \end{aligned}
\end{equation}
\end{Lemma}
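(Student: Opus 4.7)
The plan is to express $\hat{V}^T\hat{V}$ as an identity plus a small perturbation, bound the perturbation in the spectral and Frobenius norms using Lemma \ref{QI:TLS:Lemma:MMTNNT}, and then read off all three inequalities.

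First I would use the definition $\hat{V} = S^T\bar{U}\bar{\Sigma}^{-1}$ to write
\begin{equation*}
\hat{V}^T\hat{V} \;=\; \bar{\Sigma}^{-1}\bar{U}^T SS^T \bar{U}\bar{\Sigma}^{-1}.
\end{equation*}
Since $\bar{u}_1,\dots,\bar{u}_l$ are left singular vectors of $W$, one has $\bar{U}^T WW^T\bar{U} = \bar{\Sigma}^2$, so $\bar{\Sigma}^{-1}\bar{U}^T WW^T\bar{U}\bar{\Sigma}^{-1} = I_l$. Subtracting this, the perturbation
\begin{equation*}
E \;:=\; \hat{V}^T\hat{V} - I_l \;=\; \bar{\Sigma}^{-1}\bar{U}^T(SS^T - WW^T)\bar{U}\bar{\Sigma}^{-1}
\end{equation*}
captures everything that must be controlled.

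Next I would apply Lemma \ref{QI:TLS:Lemma:MMTNNT} to the sampling of columns of $S$ that produces $W$: with $p = \lceil 1/(\theta^2\delta)\rceil$, with probability at least $1-\delta$,
\begin{equation*}
\|SS^T - WW^T\|_F \;\leq\; \theta\|S\|_F^2 \;=\; \theta\|C\|_F^2,
\end{equation*}
the last equality coming from $\|S\|_F = \|C\|_F$. The truncation rule in step~\ref{QI:TLS:QiTTLS:al:s6} of QiSVD gives $\bar{\sigma}_l^2 \geq \alpha\|W\|_F^2 = \alpha\|C\|_F^2$, hence $\|\bar{\Sigma}^{-1}\|_2^2 \leq 1/(\alpha\|C\|_F^2)$. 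Combining these, and using that $\bar{U}$ has orthonormal columns (so multiplying by $\bar{U}$ or $\bar{U}^T$ does not enlarge the Frobenius norm), yields
\begin{equation*}
\|E\|_F \;\leq\; \|\bar{\Sigma}^{-1}\|_2^2\,\|SS^T - WW^T\|_F \;\leq\; \frac{\theta}{\alpha} \;=\; \xi,
\end{equation*}
and a fortiori $\|E\|_2 \leq \xi$.

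From $\|E\|_2 \leq \xi$ the first bound follows immediately: $\|\hat{V}^T\hat{V}\|_2 \leq 1+\xi$, and taking square roots gives the third bound $\|\hat{V}\|_2 \leq \sqrt{1+\xi}$. For the Frobenius norm I would use
\begin{equation*}
\|\hat{V}\|_F^2 \;=\; \mathrm{Tr}(\hat{V}^T\hat{V}) \;=\; l + \mathrm{Tr}(E),
\end{equation*}
and then apply the Cauchy--Schwarz inequality for the trace, $|\mathrm{Tr}(E)| \leq \sqrt{l}\,\|E\|_F \leq \sqrt{k}\,\xi$, together with $l \leq k$, to obtain $\|\hat{V}\|_F^2 \leq k + \sqrt{k}\,\xi$. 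The only mildly delicate point is this last step: a naive bound $\|\hat{V}\|_F^2 \leq l\,\|\hat{V}^T\hat{V}\|_2$ would give $k+k\xi$, which is weaker than claimed, so the Cauchy--Schwarz-on-the-trace trick (rather than going through the spectral norm) is what one must remember to use.
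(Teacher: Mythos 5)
Your proposal is correct and follows essentially the same route as the paper: write $\hat{V}^{T}\hat{V}-I=\bar{\Sigma}^{-1}\bar{U}^{T}(SS^{T}-WW^{T})\bar{U}\bar{\Sigma}^{-1}$, bound it by $\theta\|S\|_F^2/(\alpha\|W\|_F^2)=\xi$ in both spectral and Frobenius norms via Lemma \ref{QI:TLS:Lemma:MMTNNT} and the truncation rule $\bar{\sigma}_l^2\geq\alpha\|W\|_F^2$, and then deduce the three bounds, with the Frobenius bound coming from $\|\hat{V}\|_F^2=l+\mathrm{Tr}(\hat{E})$ and Cauchy--Schwarz on the diagonal of $\hat{E}$ (the paper does exactly this, just less explicitly). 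The only cosmetic difference is that the paper invokes Weyl's inequality to pass from $\|\hat{V}^{T}\hat{V}-I\|_2\leq\xi$ to $\|\hat{V}\|_2\leq\sqrt{1+\xi}$, whereas you take the square root of the first bound directly, which is equivalent.
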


\begin{proof}
According to $\hat{V} = S^{T} \bar{U} \bar{\Sigma}^{-1}$ and $\bar{\sigma}^2_{l} \geq \alpha \|W\|_F^2$ in step \ref{QI:TLS:QiTTLS:al:s6} of QiSVD algorithm, since $\theta = \alpha \xi$ in QiSVD algorithm, based on lemma \ref{QI:TLS:Lemma:MTMNTN}, then with probability $1- \delta$, we have
\begin{equation}\label{QI:TLS:Eq:VTVI2}
  \begin{aligned}
    \|\hat{V}^{T} \hat{V} - I\|_{2}
     & = \|\bar{\Sigma}^{-T} \bar{U}^{T} SS^{T} \bar{U}\bar{\Sigma}^{-1} - \bar{\Sigma}^{-T} \bar{U}^{T}WW^{T}\bar{U}\bar{\Sigma}^{-1}\|_{2} \\
     & = \|\bar{\Sigma}^{-T} \bar{U}^{T}(SS^{T} - WW^{T})\bar{U}\bar{\Sigma}^{-1} \|_{2} \\
     & \leq \|\bar{\Sigma}^{-1}\|^2_{2} \| \bar{U} \|^2_{2} \|SS^{T} - WW^{T}\|_{2} \\
     & \leq \|\bar{\Sigma}^{-1}\|^2_{2}  \|\bar{U} \|^2_{2} \|SS^{T} - WW^{T}\|_F \\
     & \leq \frac{\|SS^{T} - WW^{T}\|_F}{\alpha\|W\|^2_F} \\
     & \leq \frac{\theta \|S\|^2_F}{\alpha\|W\|^2_F} \\
     & = \xi.
  \end{aligned}
\end{equation}

Similar to Eq.\eqref{QI:TLS:Eq:VTVI2}, with probability at least $1 - \delta$, it yields that
\begin{equation}\label{QI:TLS:Eq:VjIVF}
\begin{aligned}
    \|\hat{V}^{T} \hat{V} - I\|_F
     & = \|\bar{\Sigma}^{-T} \bar{U}^{T} SS^{T} \bar{U}\bar{\Sigma}^{-1} - \bar{\Sigma}^{-T} \bar{U}^{T}WW^{T}\bar{U}\bar{\Sigma}^{-1}\|_F \\
     & = \|\bar{\Sigma}^{-T} \bar{U}^{T}(SS^{T} - WW^{T})\bar{U}\bar{\Sigma}^{-1} \|_F \\
     & \leq \|\bar{\Sigma}^{-1}\|^2_{2} \| \bar{U} \|^2_{2} \|SS^{T} - WW^{T}\|_F \\
     & \leq \frac{\|SS^{T} - WW^{T}\|_F}{\alpha\|W\|^2_F} \\
     & \leq \frac{\theta \|S\|^2_F}{\alpha\|W\|^2_F} \\
     & = \xi.
\end{aligned}
\end{equation}

Set $\hat{V}^{T} \hat{V} - I = \hat{E}$ with $\|\hat{E}\|_2 \leq \xi$, since $l \leq k$, we have
\begin{equation}\label{QI:TLS:Eq:VRV}
    \begin{aligned}
    \|\hat{V}^{T} \hat{V}\|_{2}  = \|I + \hat{V}^{T} \hat{V} - I\|_{2} \leq \| I \|_{2} + \|\hat{V}^{T} \hat{V} - I \|_{2}  \leq 1 + \xi,\\
    \|\hat{V}\|_F^2 = \sum_{j=1}^{l}  (\hat{V}_{:,j})^{T} \hat{V}_{:,j} = \sum_{j=1}^{l}(1 + \hat{E}_{jj}) = l +  \sum_{j=1}^{l}\hat{E}_{jj} \leq l + \sqrt{l} \xi \leq k + \sqrt{k} \xi.
    \end{aligned}
\end{equation}

By lemma \ref{QI:SLSMC:Lemma:SmSn} and Eq.\eqref{QI:TLS:Eq:VTVI2}, for any $i \in [l]$, it holds that
\begin{equation}\label{QI:TLS:Eq:SigmaV}
    |\sigma_{i}(\hat{V}^{T}\hat{V}) - \sigma_{i}(I)| \leq  \|\hat{V}^{T} \hat{V} - I\|_{2} \leq \xi,
\end{equation}
which implies that $\sigma_{\rm max} (\hat{V}^{T}\hat{V}) \leq 1+ \xi$. Moreover, we have
\begin{equation}\label{QI:TLS:Eq:normV2}
    \|\hat{V}\|_{2}  \leq \sqrt{1 + \xi}.
\end{equation}
\end{proof}

\begin{Lemma}\label{QI:TLS:Lemma:VFVR}
Given a matrix $A \in \mathbb{R}^{m \times n}$ and a vector $b \in \mathbb{R}^{m}$ satisfying the sample model and data structure, the parameters $(\epsilon, \delta, k)$ in the specified range of QiSVD algorithm, the matrices $S$, $W$ and the singular values $\bar{\sigma}_{t}$, $t \in [p]$ of matrix $W$ are defined in QiSVD algorithm. QiSVD algorithm outputs the approximate right singular matrix $\hat{V}  \in \mathbb{R}^{(n+1) \times l}$, then with probability $1- \delta$, it holds that
\begin{equation}\label{QI:SLSMC:Eq:VFVR}
    \begin{aligned}
\|C \hat{V}\|_F^2
\geq \sum^{l}_{t=1}\bar{\sigma}^2_{t} - \frac{2 \theta \|S \|^2_F}{\sqrt{\alpha}} - \theta(k + \sqrt{k} \xi) \|C\|^2_F.
    \end{aligned}
\end{equation}
where $C = [A,\,b]$.
\end{Lemma}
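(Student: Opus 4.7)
The plan is to chain two reductions: from $\|C\hat V\|_F^2$ to $\|S\hat V\|_F^2$ via the row-sampling concentration $\|C^T C - S^T S\|_F \leq \theta \|C\|_F^2$ of Lemma \ref{QI:TLS:Lemma:MTMNTN}, and from $\|S\hat V\|_F^2$ to $\sum_{t=1}^{l}\bar\sigma_t^2$ via the column-sampling concentration $\|SS^T - WW^T\|_F \leq \theta \|S\|_F^2$ of Lemma \ref{QI:TLS:Lemma:MMTNNT}. Both events hold simultaneously with probability at least $1-\delta$ (in the sense used throughout the paper) by the choice $p = \lceil 1/(\theta^2\delta)\rceil$ in QiSVD and a standard union bound.

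For the first reduction, I would write
\[
\|C\hat V\|_F^2 - \|S\hat V\|_F^2 = {\rm Tr}\bigl((C^T C - S^T S)\,\hat V \hat V^T\bigr),
\]
and apply $|{\rm Tr}(MN)|\leq \|M\|_F\|N\|_F$. Since $\hat V \hat V^T$ is positive semidefinite with the same nonzero spectrum as $\hat V^T \hat V$, its Frobenius norm is dominated by its trace, so $\|\hat V \hat V^T\|_F \leq {\rm Tr}(\hat V^T \hat V) = \|\hat V\|_F^2 \leq k + \sqrt{k}\,\xi$ by Lemma \ref{QI:TLS:Lemma:V2TVF}. This already delivers the $\theta(k+\sqrt{k}\,\xi)\|C\|_F^2$ term in the claim.

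For the second reduction, substituting $\hat V = S^T \bar U \bar \Sigma^{-1}$ and using the identity $WW^T \bar U = \bar U \bar \Sigma^2$ gives the clean split
\[
S\hat V \;=\; SS^T\bar U \bar \Sigma^{-1} \;=\; \bar U \bar \Sigma + \Delta \bar U \bar \Sigma^{-1}, \qquad \Delta := SS^T - WW^T.
\]
Expanding the squared Frobenius norm,
\[
\|S\hat V\|_F^2 \;=\; \sum_{t=1}^{l}\bar\sigma_t^2 \;+\; 2\,{\rm Tr}(\bar U^T \Delta \bar U) \;+\; \|\Delta \bar U \bar \Sigma^{-1}\|_F^2,
\]
the last term is non-negative and can be discarded, while the cross term is handled by cyclicity and Cauchy--Schwarz: $|{\rm Tr}(\bar U^T \Delta \bar U)| = |{\rm Tr}(\Delta \bar U \bar U^T)| \leq \|\Delta\|_F\,\|\bar U \bar U^T\|_F = \sqrt{l}\,\|\Delta\|_F \leq \sqrt{k}\,\theta\|S\|_F^2$, using that $\bar U$ has orthonormal columns so $\|\bar U \bar U^T\|_F^2 = {\rm Tr}(\bar U \bar U^T) = l \leq k$.

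The one point that needs attention is the cosmetic packaging of the cross term: the sharper bound $2\sqrt{k}\,\theta\|S\|_F^2$ that the algebra produces is relaxed to $\tfrac{2\theta\|S\|_F^2}{\sqrt{\alpha}}$ by invoking $\sqrt{k}\leq 1/\sqrt{\alpha}$, which holds because $\alpha=\xi/(100k^4)\leq 1/k$ whenever $\xi\leq 1$ and $k\geq 1$. This relaxed form is chosen so that the error term meshes with the truncation threshold $\bar\sigma_l^2 \geq \alpha\|W\|_F^2$ used in the convergence analysis downstream. Adding the two error contributions then gives the advertised lower bound; the main obstacle in the proof is really only the algebraic bookkeeping of the identity $WW^T \bar U = \bar U \bar \Sigma^2$ and the Frobenius-trace manipulations, both of which are routine once set up.
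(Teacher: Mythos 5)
Your proposal is correct, and its first half (passing from $\|C\hat V\|_F^2$ to $\|S\hat V\|_F^2$ at the cost of $\theta(k+\sqrt{k}\xi)\|C\|_F^2$ via the trace of $\hat V^T(C^TC-S^TS)\hat V$) is essentially identical to the paper's, differing only in whether one bounds the trace by $\|C^TC-S^TS\|_2\|\hat V\|_F^2$ columnwise or by $\|C^TC-S^TS\|_F\|\hat V\hat V^T\|_F$ with $\|\hat V\hat V^T\|_F\le{\rm Tr}(\hat V^T\hat V)$; both land on the same term. The second half is where you genuinely diverge. The paper writes $S\hat V = WW^T\bar U\bar\Sigma^{-1}+(SS^T-WW^T)\bar U\bar\Sigma^{-1}$, applies the reverse triangle inequality to get $\|S\hat V\|_F \ge \bigl(\sum_t\bar\sigma_t^2\bigr)^{1/2}-\|(SS^T-WW^T)\bar U\bar\Sigma^{-1}\|_F$, bounds the error factor by $\theta\|S\|_F^2/(\sqrt{\alpha}\|W\|_F)$ using the truncation threshold $\bar\sigma_l^2\ge\alpha\|W\|_F^2$, and then expands $(a-b)^2\ge a^2-2ab$ with $a\le\|W\|_F$ to arrive at $\sum_t\bar\sigma_t^2-2\theta\|S\|_F^2/\sqrt{\alpha}$. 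You instead expand $\|\bar U\bar\Sigma+\Delta\bar U\bar\Sigma^{-1}\|_F^2$ exactly, discard the nonnegative quadratic term, and kill the $\bar\Sigma$, $\bar\Sigma^{-1}$ factors in the cross term by trace cyclicity, getting the sharper $2\sqrt{k}\,\theta\|S\|_F^2$ without ever invoking the threshold $\bar\sigma_l^2\ge\alpha\|W\|_F^2$; the threshold-dependent form $2\theta\|S\|_F^2/\sqrt{\alpha}$ is recovered only by the cosmetic relaxation $\sqrt{k}\le 1/\sqrt{\alpha}$, which follows from $\alpha\le 1/k$. Your route buys a strictly better intermediate constant (by a factor of order $k^{3/2}/\sqrt{\xi}$) and a cleaner argument; the paper's route keeps the error term in the $\alpha$-dependent form it wants downstream without any final relaxation step. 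Both are valid, and your treatment of the probability via a union bound over the two sampling events is consistent with (indeed slightly more careful than) the paper's own accounting.
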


\begin{proof}
By a direct computation, applying Eq.\eqref{QI:TLS:Eq:VRV}, then with probability $1- \delta$, it follows that
\begin{equation}\label{QI:TLS:Eq:VVE6}
    \begin{aligned}
         \left|{\rm Tr} \left[\hat{V}^{T}(C^{T}C - S^{T}S)\hat{V} \right]\right|
        & = \left|\sum_{i=1}^{l} (\hat{V}^{T})_{i,:} (C^{T}C - S^{T}S)  \hat{V}_{:,i}  \right| \\
        & \leq \sum_{i=1}^{l} \left| (\hat{V}^{T})_{i,:} (C^{T}C - S^{T}S) \hat{V}_{:,i} \right| \\
        & \leq \|C^{T}C - S^{T}S\|_{2} \|\hat{V}\|_F^2 \\
        & \leq \|C^{T}C - S^{T}S\|_F \|\hat{V}\|_F^2 \\
        & \leq \theta (k + \sqrt{k} \xi) \|C\|^2_F.
    \end{aligned}
\end{equation}
That is, we have
\begin{equation}\label{QI:TLS:Eq:CVR}
\|C \hat{V}\|_F^2
=  {\rm Tr} \left( \hat{V}^{T} S^{T} S \hat{V} \right) + {\rm Tr} \left[ \hat{V}^{T}  (C^{T}C - S^{T}S)\hat{V} \right]
\geq \|S\hat{V}\|^2_F - \theta(k + \sqrt{k} \xi) \|C\|^2_F.
\end{equation}
According to the triangle inequality, and using $\hat{V} = S^{T} \bar{U} \bar{\Sigma}^{-1}$, we have
\begin{equation}\label{QI:TLS:Eq:WWUS}
    \begin{aligned}
         \left|\| WW^{T} \bar{U} \bar{\Sigma}^{-1}\|_F - \|( SS^{T} - WW^{T}) \bar{U} \bar{\Sigma}^{-1}\|_F\right|
         & \leq \| WW^{T} \bar{U} \bar{\Sigma}^{-1} + ( SS^{T} - WW^{T}) \bar{U} \bar{\Sigma}^{-1} \|_F
         = \|S \hat{V}\|_F,
    \end{aligned}
\end{equation}
which implies that
\begin{equation}\label{QI:TLS:Eq:SVR}
    \begin{aligned}
         \|S \hat{V}\|_F^2
         & \geq (\| WW^{T} \bar{U} \bar{\Sigma}^{-1}\|_F - \|( SS^{T} - WW^{T}) \bar{U} \bar{\Sigma}^{-1}\|_F)^2 \\
         & = \left[\left(\sum^{l}_{t=1}\bar{\sigma}^2_{t}\right)^{\frac{1}{2}} - \|(SS^{T} - WW^{T}) \bar{U} \bar{\Sigma}^{-1}\|_F\right]^2 \\
         & \geq \left[\left(\sum^{l}_{t=1}\bar{\sigma}^2_{t}\right)^{\frac{1}{2}}- \frac{\theta \|S \|^2_F}{\sqrt{\alpha} \|W\|_F}\right]^2 \\
         & > \sum^{l}_{t=1}\bar{\sigma}^2_{t} - \frac{2 \theta \|S \|^2_F}{\sqrt{\alpha}},
    \end{aligned}
\end{equation}
where the results follows from Eqs.\eqref{QI:TLS:Eq:CVR}, \eqref{QI:TLS:Eq:WWUS} and \eqref{QI:TLS:Eq:SVR}.
\end{proof}

\begin{Lemma}\label{QI:TLS:Lemma:SSWCE}
Given a matrix $A \in \mathbb{R}^{m \times n}$ and a vector $b \in \mathbb{R}^{m}$ satisfying the sample model and data structure, the parameters $(\epsilon, \delta, k)$ in the specified range of QiSVD algorithm, the matrices $S$, $W$ and the singular values $\bar{\sigma}_{t}, t \in [p]$ of matrix $W$ are defined in QiSVD algorithm. Suppose that $C = [A,\,b]$ has singular values $\sigma_{1}, \sigma_{2}, \cdots, \sigma_{n + 1}$. QiSVD algorithm outputs the approximate right singular matrix $\hat{V}$, then with probability $1- \delta$, it holds that
\begin{equation}\label{QI:TLS:Eq:SSWCE}
\sum_{t=1}^{k}\bar{\sigma}^2_{t}  \geq \sum_{t=1}^{k}\sigma^2_{t} - 2\sqrt{k} \theta \|S\|^2_F.
\end{equation}
\end{Lemma}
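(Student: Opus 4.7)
The plan is to exploit the Ky Fan variational characterization of the sum of the top-$k$ squared singular values: for any matrix $M$, $\sum_{t=1}^{k}\sigma_{t}^{2}(M) = \max_{U}\mathrm{Tr}(U^{T}MM^{T}U)$, where the maximum is taken over matrices $U$ with $k$ orthonormal columns. I would bridge $W$ to $C$ through $S$, showing that at each stage the sum of the top-$k$ squared singular values decreases by at most $\sqrt{k}\,\theta\|S\|_{F}^{2}$.

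First, to compare $W$ with $S$, let $U_{S}$ consist of the top-$k$ left singular vectors of $S$, so that $\mathrm{Tr}(U_{S}^{T}SS^{T}U_{S}) = \sum_{t=1}^{k}\sigma_{t}^{2}(S)$. Ky Fan's inequality gives
\[
\sum_{t=1}^{k}\bar{\sigma}_{t}^{2} \;\geq\; \mathrm{Tr}(U_{S}^{T}WW^{T}U_{S}) \;=\; \sum_{t=1}^{k}\sigma_{t}^{2}(S) \;-\; \mathrm{Tr}\bigl(U_{S}^{T}(SS^{T}-WW^{T})U_{S}\bigr).
\]
The residual trace is controlled by $|\mathrm{Tr}(U_{S}^{T}EU_{S})| = |\mathrm{Tr}(U_{S}U_{S}^{T}E)| \leq \|U_{S}U_{S}^{T}\|_{F}\|E\|_{F} = \sqrt{k}\,\|E\|_{F}$, since the rank-$k$ orthogonal projector $U_{S}U_{S}^{T}$ satisfies $\|U_{S}U_{S}^{T}\|_{F}^{2} = \mathrm{Tr}(U_{S}U_{S}^{T}) = k$. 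Combining with Lemma \ref{QI:TLS:Lemma:MMTNNT}, which with probability at least $1-\delta$ gives $\|SS^{T}-WW^{T}\|_{F} \leq \theta\|S\|_{F}^{2}$, yields $\sum_{t=1}^{k}\bar{\sigma}_{t}^{2} \geq \sum_{t=1}^{k}\sigma_{t}^{2}(S) - \sqrt{k}\,\theta\|S\|_{F}^{2}$.

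Second, I would run the entirely analogous argument on the $(n{+}1)\times(n{+}1)$ Gram matrices $C^{T}C$ and $S^{T}S$, taking $U_{C}$ to be the top-$k$ right singular vectors of $C$ so that $\mathrm{Tr}(U_{C}^{T}C^{T}CU_{C}) = \sum_{t=1}^{k}\sigma_{t}^{2}$. Using Ky Fan in the form $\sum_{t=1}^{k}\sigma_{t}^{2}(S) \geq \mathrm{Tr}(U_{C}^{T}S^{T}SU_{C})$, the same trace Cauchy–Schwarz estimate, Lemma \ref{QI:TLS:Lemma:MTMNTN} giving $\|C^{T}C-S^{T}S\|_{F} \leq \theta\|C\|_{F}^{2}$, and the identity $\|C\|_{F} = \|S\|_{F}$ from Eq.~\eqref{QI:TLS:Eq:CFSFWF}, I obtain $\sum_{t=1}^{k}\sigma_{t}^{2}(S) \geq \sum_{t=1}^{k}\sigma_{t}^{2} - \sqrt{k}\,\theta\|S\|_{F}^{2}$. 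Adding the two chained inequalities delivers the claim.

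The main obstacle is avoiding an extra factor of $\sqrt{k}$ in the final constant: invoking Weyl's inequality (Lemma \ref{QI:SLSMC:Lemma:SmSn}) index by index and summing would give $|\sigma_{t}^{2}(W) - \sigma_{t}^{2}(S)| \leq \|SS^{T}-WW^{T}\|_{2} \leq \theta\|S\|_{F}^{2}$ per index and hence $2k\,\theta\|S\|_{F}^{2}$ in total, which is too weak. The key refinement is the bound $|\mathrm{Tr}(U^{T}EU)| \leq \sqrt{k}\,\|E\|_{F}$, which exploits that the rank-$k$ projector $UU^{T}$ has Frobenius norm $\sqrt{k}$ rather than merely spectral norm $1$; equivalently, one could replace Weyl with the Hoffman–Wielandt/Mirsky inequality combined with a Cauchy–Schwarz over the first $k$ indices, obtaining the same $\sqrt{k}$ savings.
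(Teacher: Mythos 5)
Your proposal is correct, and it arrives at exactly the bound in the lemma, but by a genuinely different route than the paper. The paper's proof introduces the singular values $\tilde{\tau}_t$ of $S$ as an intermediate quantity just as you do, but then controls $\bigl|\sum_{t=1}^{k}(\tilde{\tau}_t^2-\sigma_t^2)\bigr|$ and $\bigl|\sum_{t=1}^{k}(\bar{\sigma}_t^2-\tilde{\tau}_t^2)\bigr|$ by first applying Cauchy--Schwarz over the $k$ indices and then the Hoffman--Wielandt theorem to the symmetric pairs $(C^{T}C, S^{T}S)$ and $(SS^{T}, WW^{T})$, i.e.\ $\sum_{t=1}^{k}(\lambda_t(A)-\lambda_t(B))^2 \leq \|A-B\|_F^2$ --- precisely the alternative you sketch in your closing remark. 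You instead fix the test subspaces spanned by the top-$k$ singular vectors of $S$ and of $C$, invoke the Ky Fan variational characterization to get one-sided comparisons, and bound the residual trace by $|\mathrm{Tr}(PE)|\leq\|P\|_F\|E\|_F=\sqrt{k}\,\|E\|_F$ for the rank-$k$ projector $P$. The two mechanisms extract the same $\sqrt{k}$ factor; the paper's version delivers a two-sided estimate $\bigl|\sum_{t=1}^{k}(\bar{\sigma}_t^2-\sigma_t^2)\bigr|\leq 2\sqrt{k}\,\theta\|S\|_F^2$ (slightly more information than needed), whereas yours gives only the lower bound actually required but is arguably more self-contained, since it needs only the variational principle and trace Cauchy--Schwarz rather than a perturbation theorem for full spectra. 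Your diagnosis of why a naive per-index Weyl bound loses a factor of $\sqrt{k}$ is also accurate.
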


\begin{proof}
Based on Cauchy-Schwarz inequality and Hoffman-Wielandt theorem \cite{GL13}, and suppose matrix $S$ has singular values $\tilde{\tau}_{1}, \tilde{\tau}_{2}, \cdots, \tilde{\tau}_{p}$, then by lemma \ref{QI:TLS:Lemma:MTMNTN}, we have
\begin{equation}\label{QI:TLS:Eq:SCE}
    \begin{aligned}
\left|\sum_{t=1}^{k}(\tilde{\tau}^2_{t} - \sigma^2_{t})\right|
& \leq \sqrt{k} \left[\sum_{t=1}^{k} (\tilde{\tau}^2_{t} - \sigma^2_{t})^2\right]^{\frac{1}{2}} \\
& = \sqrt{k} \left[\sum_{t=1}^{k} (\sigma_{t}(S^{T}S) - \sigma_{t}(C^{T}C))^2\right]^{\frac{1}{2}} \\
& \leq \sqrt{k} \|S^{T}S - C^{T}C\|_F \\
& \leq \sqrt{k} \theta \|S\|^2_F.
    \end{aligned}
\end{equation}
where $\sigma_{j}(A)$ denote the $j$th singular value of $A$.

Similarly, by lemma \ref{QI:TLS:Lemma:MMTNNT}, it yields that
\begin{equation}\label{QI:TLS:Eq:WSE}
    \begin{aligned}
\left|\sum_{t=1}^{k} (\bar{\sigma}^2_{t} - \tilde{\tau}^2_{t})\right|
& \leq \sqrt{k} \left[\sum_{t=1}^{k} (\bar{\sigma}^2_{t} - \tilde{\tau}^2_{t})^2\right]^{\frac{1}{2}} \\
& = \sqrt{k} \left[\sum_{t=1}^{k} (\sigma_{t}(WW^{T}) - \sigma_{t}(SS^{T}))^2\right]^{\frac{1}{2}}\\
& \leq \sqrt{k} \|SS^{T} - WW^{T}\|_F  \\
& \leq \sqrt{k} \theta \|S\|^2_F.
    \end{aligned}
\end{equation}
Combining Eqs.\eqref{QI:TLS:Eq:SCE} with \eqref{QI:TLS:Eq:WSE}, it follows that
\begin{equation}\label{QI:TLS:Eq:WCE}
\begin{aligned}
\left|\sum_{t=1}^{k}  (\bar{\sigma}^2_{t} - \sigma^2_{t})\right|
& = \left|\sum_{t=1}^{k}  (\bar{\sigma}^2_{t} - \tilde{\tau}^2_{t}  + \tilde{\tau}^2_{t}  - \sigma^2_{t}) \right| \\
& \leq \left|\sum_{t=1}^{k} (\bar{\sigma}^2_{t} - \tilde{\tau}^2_{t})\right|  + \left|\sum_{t=1}^{k} (\tilde{\tau}^2_{t}  - \sigma^2_{t})\right| \\
& \leq \sqrt{k} \|SS^{T} - WW^{T}\|_F + \sqrt{k} \|S^{T}S - C^{T}C\|_F \\
& = 2\sqrt{k} \theta \|S\|^2_F,
\end{aligned}
\end{equation}
which implies that
\begin{equation}\label{QI:TLS:Eq:SWCE}
\sum_{t=1}^{k}\bar{\sigma}^2_{t} \geq \sum_{t=1}^{k}\sigma^2_{t} - 2\sqrt{k} \theta \|S\|^2_F.
\end{equation}
\end{proof}

\begin{Lemma}\label{QI:TLS:Lemma:CVVFCKE}
Suppose that a matrix $A \in \mathbb{R}^{m \times n}$ and a vector $b \in \mathbb{R}^{m}$ satisfying the sample model and data structure, the parameters $(\epsilon, \delta, k)$ in the specified range of QiSVD algorithm. QiSVD algorithm outputs the approximate right singular matrix $\hat{V} \in \mathbb{R}^{(n+1) \times l}$, then with probability $1- \delta$, it follows that

\begin{equation*}
    \|C - C\hat{V}\hat{V}^{T}\|^2_F <  \|C - C_{(k)}\|^2_F + \epsilon \|C\|_F^2,
\end{equation*}
where $C = [A,\,b]$ and $C_{(k)} = \sum_{i= 1}^{k} \sigma_{i} u_{i} v_{i}^{T}$.
\end{Lemma}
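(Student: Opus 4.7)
The plan is to reduce everything to two ingredients already available: (i) the approximate orthonormality of $\hat V$ from Lemma~\ref{QI:TLS:Lemma:V2TVF}, and (ii) the lower bound on $\|C\hat V\|_F^2$ from Lemma~\ref{QI:TLS:Lemma:VFVR} combined with the singular value bound from Lemma~\ref{QI:TLS:Lemma:SSWCE}. The rest is parameter bookkeeping using $\xi=\epsilon/(2\epsilon+4)$, $\alpha=\xi/(100k^4)$, $\theta=\alpha\xi$.

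First I would open up the Frobenius norm as
\[
\|C-C\hat V\hat V^T\|_F^2
=\|C\|_F^2-2\|C\hat V\|_F^2+\|C\hat V\hat V^T\|_F^2 .
\]
The last term is $\mathrm{Tr}\!\bigl((\hat V^T\hat V)\,\hat V^TC^TC\hat V\bigr)$; since $\hat V^TC^TC\hat V$ is positive semidefinite, the trace inequality $\mathrm{Tr}(AB)\le\|B\|_2\mathrm{Tr}(A)$ together with $\|\hat V^T\hat V\|_2\le 1+\xi$ from Lemma~\ref{QI:TLS:Lemma:V2TVF} gives $\|C\hat V\hat V^T\|_F^2\le(1+\xi)\|C\hat V\|_F^2$, hence
\[
\|C-C\hat V\hat V^T\|_F^2\;\le\;\|C\|_F^2-(1-\xi)\|C\hat V\|_F^2 .
\]

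Next I would lower-bound $\|C\hat V\|_F^2$ by chaining Lemmas~\ref{QI:TLS:Lemma:VFVR} and~\ref{QI:TLS:Lemma:SSWCE} (and using $\|S\|_F=\|C\|_F$ from the sampling identity~\eqref{QI:TLS:Eq:CFSFWF}). The only subtlety is that Lemma~\ref{QI:TLS:Lemma:VFVR} produces $\sum_{t=1}^l\bar\sigma_t^2$ while Lemma~\ref{QI:TLS:Lemma:SSWCE} controls $\sum_{t=1}^k\bar\sigma_t^2$. When $l<k$, step~\ref{QI:TLS:QiTTLS:al:s6} of QiSVD forces $\bar\sigma_t^2<\alpha\|W\|_F^2=\alpha\|C\|_F^2$ for $l<t\le k$, so
\[
\sum_{t=1}^{l}\bar\sigma_t^2\;\ge\;\sum_{t=1}^{k}\bar\sigma_t^2-k\alpha\|C\|_F^2
\;\ge\;\sum_{t=1}^{k}\sigma_t^2-2\sqrt k\,\theta\|C\|_F^2-k\alpha\|C\|_F^2 ,
\]
and plugging this into Lemma~\ref{QI:TLS:Lemma:VFVR} yields
\[
\|C\hat V\|_F^2\;\ge\;\sum_{t=1}^{k}\sigma_t^2-R\|C\|_F^2,\qquad
R:=2\sqrt k\,\theta+k\alpha+\tfrac{2\theta}{\sqrt\alpha}+\theta(k+\sqrt k\,\xi).
\]

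Substituting back and using $\|C\|_F^2-\sum_{t=1}^{k}\sigma_t^2=\|C-C_{(k)}\|_F^2$, $\sum_{t=1}^{k}\sigma_t^2\le\|C\|_F^2$, I get
\[
\|C-C\hat V\hat V^T\|_F^2\;\le\;\|C-C_{(k)}\|_F^2+\xi\|C\|_F^2+(1-\xi)R\,\|C\|_F^2.
\]
The last paragraph is then the parameter check: $\xi=\epsilon/(2\epsilon+4)$ satisfies $4\xi\le\epsilon$, and with $\alpha=\xi/(100k^4)$, $\theta=\alpha\xi$ every summand of $R$ is of order $\xi/k^{3}$ or smaller (e.g.\ $k\alpha=\xi/(100k^3)$, $2\theta/\sqrt\alpha=2\xi\sqrt\alpha=\xi^{3/2}/(5k^2)$, etc.), so $\xi+(1-\xi)R<4\xi\le\epsilon$ and the claimed strict inequality follows. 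The probability statement comes from Lemmas~\ref{QI:TLS:Lemma:MTMNTN} and~\ref{QI:TLS:Lemma:MMTNNT} via the choice $p=\lceil1/(\theta^2\delta)\rceil$, possibly after a harmless rescaling of $\delta$ by a universal constant to cover the union bound over the two sampling events used in Lemmas~\ref{QI:TLS:Lemma:V2TVF}--\ref{QI:TLS:Lemma:SSWCE}.

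The main obstacle is not any single estimate but the accounting: ensuring the $k\alpha$ slack introduced by the $l<k$ case and the $R$-terms from the approximation lemmas can all be simultaneously swallowed into $\epsilon\|C\|_F^2$. Once one trusts that $\xi$ is the dominant contribution and every other term carries an extra factor $\alpha$ or $\sqrt\alpha$, the specific choice $\alpha=\xi/(100k^4)$ makes the verification essentially automatic.
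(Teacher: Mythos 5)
Your proposal is correct, and it reaches the same intermediate estimate $\|C\hat V\|_F^2\ \gtrsim\ \sum_{t=1}^k\sigma_t^2 - O(\xi)\|C\|_F^2$ through the same chain (Lemma~\ref{QI:TLS:Lemma:VFVR}, Lemma~\ref{QI:TLS:Lemma:SSWCE}, the case split on $l$ versus $t_{\max}$, and the norm identities $\|S\|_F=\|W\|_F=\|C\|_F$), but the final assembly is genuinely different from the paper's. The paper introduces the thin SVD $\hat V = X\Sigma_{\hat V}Y^T$, proves $\|C-CXX^T\|_F^2=\|C\|_F^2-\|CX\|_F^2<\|C-C_{(k)}\|_F^2+2\xi\|C\|_F^2$ using the exact orthonormality of $X$, bounds $\|XX^T-\hat V\hat V^T\|_F\le\xi$, and then transfers to $\hat V$ via the weighted triangle inequality $(a+b)^2\le(1+\xi)a^2+(1+1/\xi)b^2$, ending with the constant $3\xi^2+4\xi<\epsilon$. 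You instead expand $\|C-C\hat V\hat V^T\|_F^2=\|C\|_F^2-2\|C\hat V\|_F^2+\mathrm{Tr}\bigl((\hat V^T\hat V)(\hat V^TC^TC\hat V)\bigr)$ and absorb the non-orthonormality directly via $\mathrm{Tr}(BA)\le\|B\|_2\,\mathrm{Tr}(A)$ for $A\succeq 0$ together with $\|\hat V^T\hat V\|_2\le1+\xi$ from Lemma~\ref{QI:TLS:Lemma:V2TVF}, arriving at $\|C\|_F^2-(1-\xi)\|C\hat V\|_F^2$. This avoids the auxiliary matrix $X$ and the $\|XX^T-\hat V\hat V^T\|_F$ estimate entirely, and yields a slightly sharper final constant (roughly $2\xi$ rather than $4\xi+3\xi^2$), so the strict inequality $<\epsilon\|C\|_F^2$ follows with room to spare; the paper's detour through $X$ buys nothing here except reuse of Eq.~\eqref{QI:TLS:Eq:XXVVR}, which it needs nowhere else. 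Two cosmetic remarks: your parenthetical "every summand of $R$ is of order $\xi/k^3$ or smaller" is not literally true for the term $2\theta/\sqrt{\alpha}=\xi^{3/2}/(5k^2)$, though your explicit evaluation of it is right and the needed bound $R<\xi$ certainly holds; and you correctly use $\alpha=\xi/(100k^4)$ from the algorithm, whereas the paper's own proof silently switches to $\alpha=\xi/(16k^4)$ --- an internal inconsistency of the paper that your version does not inherit.
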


\begin{proof}
Motivated by the work in \cite{PDKM06}, using the thin singular value decomposition of $\hat{V} \in \mathbb{R}^{(n+1) \times l}$, namely
\begin{equation}\label{QI:TLS:Eq:SVDV}
    \hat{V} = X \Sigma_{\hat{V}} Y^{T},
\end{equation}
where $X \in \mathbb{R}^{(n+1) \times l}$, $Y \in \mathbb{R}^{l \times l}$ are orthonormal and $\Sigma_{\hat{U}} \in \mathbb{R}^{l \times l}$ is a diagonal matrix, we obtain
\begin{equation}\label{QI:TLS:Eq:VVE2}
\|C \hat{V}\|_F^2  = \|CX \Sigma_{\hat{V}}\|_F^2 \leq \|CX\|_F^2 \|\Sigma_{\hat{V}}\|^2_{2} = \|CX\|_F^2 \|\hat{V}^{T}\hat{V}\|_{2}.
\end{equation}
Since $\xi = \frac{\epsilon}{2\epsilon+4} < 1$, using lemma \ref{QI:TLS:Lemma:V2TVF}, it holds that
\begin{equation}\label{QI:TLS:Eq:CXE}
        \|CX\|_F^2 \geq \frac{1}{\|\hat{V}^{T}\hat{V}\|_{2}} \|C \hat{V}\|_F^2 \geq \frac{1}{1 + \xi}  \|C \hat{V}\|_F^2 \geq ({1 - \xi}) \|C \hat{V}\|_F^2.
\end{equation}
Based on lemma \ref{QI:TLS:Lemma:VFVR}, we have
\begin{equation}\label{QI:TLS:Eq:FCXE}
\|C X \|_F^2
\geq ({1 - \xi})  \left[ \sum^{l}_{t=1}\bar{\sigma}^2_{t} - \frac{2 \theta \|S \|^2_F}{\sqrt{\alpha}} - \theta(k + \sqrt{k} \xi) \|C\|^2_F \right].
\end{equation}

Since $l = {\rm min}\{k, {\rm max}\{t \in [p]: \bar{\sigma}_{t}^2 \geq \alpha \|W\|_F^2\}\}$ in the step \ref{QI:TLS:QiTTLS:al:s6} of QiSVD algorithm, set $t_{\rm max} = {\rm max}\{t \in [p]: \bar{\sigma}_{t}^2 \geq \alpha \|W\|_F^2\}$, there are two items as follows.
\begin{itemize}
  \item $l = t_{\rm max} \leq k$\\
    Since $l = t_{\rm max} \leq k$, we have $\bar{\sigma}_{t}^2 < \alpha \|W\|_F^2$, $t = l+1, l+2, \cdots, k$, i.e.,
\begin{equation}\label{QI:TLS:Eq:tktw}
    \sum_{t=l+1}^{k}\bar{\sigma}^2_{t} < (k - l)\alpha\|W\|_F^2.
\end{equation}
Using lemma \ref{QI:TLS:Lemma:SSWCE} and  Eq.\eqref{QI:TLS:Eq:tktw}, it yields that
\begin{equation}\label{QI:TLS:Eq:sskl}
\sum_{t=1}^{l}\bar{\sigma}^2_{t}
= \sum_{t=1}^{k}\bar{\sigma}^2_{t} - \sum_{t = l+1}^{k}\bar{\sigma}^2_{t} \\
> \sum_{t=1}^{k}\sigma^2_{t} - 2\sqrt{k} \theta \|S\|^2_F - (k - l) \alpha \|W\|_F^2.
\end{equation}
  \item $l = k < t_{\rm max}$\\
Applying $l = k$ and lemma \ref{QI:TLS:Lemma:SSWCE}, we obtain
\begin{equation}\label{QI:TLS:Eq:ltk}
\sum_{t=1}^{l}\bar{\sigma}^2_{t} = \sum_{t=1}^{k}\bar{\sigma}^2_{t} > \sum_{t=1}^{k}\sigma^2_{t} - 2\sqrt{k} \theta \|S\|^2_F.
\end{equation}
\end{itemize}

Using Eqs.\eqref{QI:TLS:Eq:sskl} and \eqref{QI:TLS:Eq:ltk}, the Eq.\eqref{QI:TLS:Eq:FCXE} becomes
\begin{equation}\label{QI:TLS:Eq:FCXE}
\begin{aligned}
\|C X \|_F^2
        & \geq ({1 - \xi}) \left[\sum_{t=1}^{k}\sigma^2_{t} - 2\sqrt{k} \theta \|S\|^2_F - (k - l) \alpha \|W\|_F^2 - \frac{2 \theta \|S \|^2_F}{\sqrt{\alpha}} - \theta(k + \sqrt{k} \xi) \|C\|^2_F \right] \\
        & \geq \sum_{t=1}^{k}\sigma^2_{t} - 2\sqrt{k} \theta \|C\|^2_F - k\alpha \|W\|_F^2 - \frac{2\theta\|S\|_F^2}{\sqrt{\alpha}} - \theta  (k + \sqrt{k} \xi) \|C\|^2_F - \xi \|W\|_F^2 \\
        & \geq \sum_{t=1}^{k}\sigma^2_{t} - \left[ 2\sqrt{k} \theta + k\alpha + 2\theta/\sqrt{\alpha} + \theta  (k + \sqrt{k} \xi) + \xi \right] \|C\|^2_F.
\end{aligned}
\end{equation}

Since $\alpha = \frac{\xi}{16 k^4 }$, $\theta = \alpha \xi$ and $\xi = \frac{\epsilon}{2\epsilon +4}$, it holds that
\begin{equation}\label{QI:TLS:Eq:klbesk}
\begin{aligned}
&~~~~ 2\sqrt{k} \theta + k\alpha + 2\theta/\sqrt{\alpha} + \theta  (k + \sqrt{k} \xi) + \xi \\
& = 2\sqrt{k} \alpha \xi + k\alpha + 2\sqrt{\alpha}\xi + k \alpha \xi + \sqrt{k} \alpha \xi^2 + \xi \\
& \leq 2k \alpha \xi + k\alpha + 2 k \sqrt{\alpha}\xi + k \alpha \xi + k \alpha \xi + \xi \\
& = 4k \alpha \xi + k\alpha + 2 k \sqrt{\alpha}\xi + \xi \\
& < \frac{\xi}{4 k} + \frac{\xi}{16 k} + \frac{\xi}{2k}  + \xi
< 2\xi.
\end{aligned}
\end{equation}
Combining Eqs.\eqref{QI:TLS:Eq:FCXE} and \eqref{QI:TLS:Eq:klbesk}, we have
\begin{equation}\label{QI:TLS:Eq:CXt2e}
\|C X \|_F^2
> \sum_{t=1}^{k}\sigma^2_{t} - 2\xi \|C\|^2_F.
\end{equation}

By direct computation, applying Eq.\eqref{QI:TLS:Eq:CXt2e}, we obtain
\begin{equation}\label{QI:TLS:Eq:CCXXT}
  \begin{aligned}
    \|C - C XX^{T}\|_F^2
    & = \|C\|_F^2 - \|CX\|_F^2 \\
    & < \|C\|_F^2 - \sum_{t=1}^{k}\sigma^2_{t} + 2\xi \|C\|^2_F\\
    & = \|C - C_{(k)}\|_F^2 + 2\xi \|C\|^2_F,
  \end{aligned}
\end{equation}
where $C_{(k)} = \sum_{i= 1}^{k} \sigma_{i} u_{i} v_{i}^{T}$.

Using Eqs.\eqref{QI:TLS:Eq:SVDV} and \eqref{QI:TLS:Eq:VjIVF}, we have
\begin{equation}\label{QI:TLS:Eq:XXVVR}
    \|XX^{T} - \hat{V}  \hat{V}^{T} \|_F
    = \|X(I - \Sigma_{\hat{V}}^{2}) X^{T}\|_F   \\
    = \|I - \Sigma_{\hat{V}}^{2}\|_F \\
    = \|Y(I - \Sigma_{\hat{V}}^{2})Y^{T}\|_F \\
    = \|I - \hat{V}^{T} \hat{V} \|_F \\
    \leq \xi.
\end{equation}

By the triangle inequality, combining Eqs.\eqref{QI:TLS:Eq:CCXXT} with \eqref{QI:TLS:Eq:XXVVR}, for $\xi \geq 0$, it yields that
\begin{equation}\label{QI:TLS:Eq:CCVVR}
\begin{aligned}
   \|C - C\hat{V}  \hat{V}^{T} \|_F^2
   & \leq \left(\|C - CXX^{T}\|_F + \|CXX^{T} - C\hat{V}  \hat{V}^{T}\|_F\right)^2 \\
   & \leq (1 + \xi)\|C - CXX^{T}\|_F^2 + (1 + 1/\xi)\|CXX^{T} - C\hat{V}  \hat{V}^{T}\|_F^2  \\
   & \leq (1 + \xi)\|C - CXX^{T}\|_F^2 + (1 + 1/\xi)\|XX^{T} - \hat{V}  \hat{V}^{T} \|_F^2 \|C\|_F^2 \\
   & < (1 + \xi)  (\|C - C_{(k)}\|_F^2 + 2\xi \|C\|_F^2) + (1 + 1/\xi) \xi^2 \|C\|_F^2 \\
   & = \|C - C_{(k)}\|_F^2 + (3\xi^2 + 4 \xi) \|C\|_F^2 \\
   & < \|C - C_{(k)}\|_F^2 + \epsilon \|C\|_F^2,
\end{aligned}
\end{equation}
where the last equality follows that $3\xi^2 + 4 \xi = 3 \left(\frac{\epsilon}{2\epsilon+4}\right)^2 + 4\left(\frac{\epsilon}{2\epsilon+4}\right) < \epsilon$.
\end{proof}

\begin{Lemma}(\cite{SJG91})\label{QI:TLS:Lemma:ERI}
Let $A$ be an $n \times n$ positive definite Hermitian matrix and $A = LL^{T}$ its Cholesky factorization. If $E$ is an $n \times n$ Hermitian matrix satisfying $\|A^{-1}\|_2 \|E\|_2 < 1$, then there is a unique Cholesky factorization
$$A + E = (L + G)(L + G)^{T},$$
and
$$\frac{\|G\|_F}{\|L\|_2} \leq \frac{\kappa} {\sqrt{2(1 - \|A^{-1}\|_2 \|E\|_2)}} \frac{\|E\|_F}{\|A\|_2}, $$
where $\kappa$ denotes the condition number of $A$.
\end{Lemma}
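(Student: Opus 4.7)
The plan is to first establish existence and uniqueness of the perturbed Cholesky factor, derive the quadratic matrix equation satisfied by $G$, reduce it to a normalized symmetric form, and finally obtain the Frobenius bound via a fixed-point argument that encodes the $1/\sqrt{2}$ factor from the lower-triangular/symmetric correspondence.

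First I would show that $A+E$ is positive definite: for any nonzero $x$, the hypothesis $\|A^{-1}\|_2\|E\|_2<1$ yields $x^{T}(A+E)x \geq (1/\|A^{-1}\|_2 - \|E\|_2)\|x\|_2^2 > 0$. Hence $A+E$ has a unique Cholesky factorization $\tilde L \tilde L^{T}$ with $\tilde L$ lower triangular and positive on the diagonal, and $G:=\tilde L - L$ is lower triangular. Expanding $(L+G)(L+G)^{T}=LL^{T}+E$ yields the quadratic matrix equation $LG^{T} + GL^{T} + GG^{T} = E$. Left-multiplying by $L^{-1}$, right-multiplying by $L^{-T}$, and setting $H:=L^{-1}G$ (still lower triangular) and $\widetilde E := L^{-1}EL^{-T}$ (symmetric), rewrites this as
\[ H + H^{T} + HH^{T} = \widetilde E, \qquad \text{i.e.,}\qquad (I+H)(I+H)^{T}=I+\widetilde E, \]
with $\|\widetilde E\|_2 \leq \|L^{-1}\|_2^{2}\|E\|_2 = \|A^{-1}\|_2\|E\|_2 < 1$, so $I+\widetilde E$ is positive definite.

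The key algebraic identity that supplies the $1/\sqrt{2}$ factor is that for any lower triangular $M$, $\|M+M^{T}\|_F^{2} = 2\|M\|_F^{2} + 2\sum_i M_{ii}^{2} \geq 2\|M\|_F^{2}$. Applied to $M=H$ together with $H+H^{T}=\widetilde E - HH^{T}$, this gives
\[ \sqrt{2}\,\|H\|_F \leq \|H+H^{T}\|_F = \|\widetilde E - HH^{T}\|_F \leq \|\widetilde E\|_F + \|HH^{T}\|_F. \]
To close the estimate I would bound $\|HH^{T}\|_F$ by exploiting $(I+H)(I+H)^{T}=I+\widetilde E$: since $\sigma_{\min}(I+H)^{2}\geq 1-\|\widetilde E\|_2$ and $HH^{T}$ is positive semidefinite, a contraction-mapping argument on the ball $\{H:\|H\|_F \leq r\}$, with $r$ chosen as the positive root of $\sqrt{2}\,r = \|\widetilde E\|_F + r^{2}c(\|\widetilde E\|_2)$ for an appropriate constant $c$, extracts the refined bound $\|H\|_F \leq \|\widetilde E\|_F/\sqrt{2(1-\|\widetilde E\|_2)}$. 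Finally, $\|G\|_F = \|LH\|_F \leq \|L\|_2\|H\|_F$; substituting $\|\widetilde E\|_F \leq \|A^{-1}\|_2\|E\|_F$, $\|\widetilde E\|_2 \leq \|A^{-1}\|_2\|E\|_2$, and $\|A^{-1}\|_2=\kappa/\|A\|_2$ assembles the stated inequality $\|G\|_F/\|L\|_2 \leq \kappa\,\|E\|_F\bigl/\bigl(\sqrt{2(1-\|A^{-1}\|_2\|E\|_2)}\,\|A\|_2\bigr)$.

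The main obstacle is the sharp nonlinear closure: the crude estimate $\|HH^{T}\|_F \leq \|H\|_2\|H\|_F$ combined with a coarse spectral bound on $\|H\|_2$ only recovers the linearized inequality $\|H\|_F \lesssim \|\widetilde E\|_F/\sqrt{2}$, not the refined denominator $\sqrt{2(1-\|\widetilde E\|_2)}$. Obtaining the precise $(1-\|A^{-1}\|_2\|E\|_2)$ correction requires carefully tracking how the positive semidefinite contribution $HH^{T}$ reduces the effective size of the right-hand side in the fixed-point iteration, and verifying that the iteration is indeed a contraction on the ball of radius $r$; this delicate bookkeeping is where most of the technical work of the proof lies.
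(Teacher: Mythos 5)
The paper does not prove this lemma at all: it is imported verbatim, with citation, from Sun's perturbation analysis of the Cholesky factorization \cite{SJG91}, so there is no in-paper argument to measure your proposal against. Judged on its own terms, your setup is sound and matches the standard route: positive definiteness of $A+E$ under $\|A^{-1}\|_2\|E\|_2<1$, the quadratic equation $LG^{T}+GL^{T}+GG^{T}=E$, the normalization $H=L^{-1}G$, $\widetilde E=L^{-1}EL^{-T}$ with $\|\widetilde E\|_2\le\|L^{-1}\|_2^2\|E\|_2=\|A^{-1}\|_2\|E\|_2$, the identity $\|M+M^{T}\|_F^2=2\|M\|_F^2+2\sum_i M_{ii}^2$ for lower triangular $M$ (the correct source of the $\sqrt2$), and the final substitution $\|A^{-1}\|_2=\kappa/\|A\|_2$ are all correct; the stated bound does follow once one has $\|H\|_F\le\|\widetilde E\|_F\big/\sqrt{2(1-\|\widetilde E\|_2)}$.

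That intermediate inequality is precisely what you do not establish, and you say so yourself in your closing paragraph. The estimate $\sqrt2\,\|H\|_F\le\|\widetilde E\|_F+\|HH^{T}\|_F$ closes, via $\|HH^{T}\|_F\le\|H\|_2\|H\|_F$, only to $\|H\|_F\le\|\widetilde E\|_F/(\sqrt2-\|H\|_2)$, and to reach the stated denominator one would then need $\|H\|_2\le\sqrt2\,\bigl(1-\sqrt{1-\|\widetilde E\|_2}\bigr)$. This is false in general: for $I+H=\bigl(\begin{smallmatrix}1&0\\ c&1\end{smallmatrix}\bigr)$ one has $\|H\|_2=|c|$ while $\sqrt2\,(1-\sqrt{1-\|\widetilde E\|_2})\approx|c|/\sqrt2$ for small $c$. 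So the ``contraction-mapping argument on the ball of radius $r$,'' with its unspecified constant $c(\|\widetilde E\|_2)$ and unverified contraction property, is not routine bookkeeping that can be deferred: the most natural way of instantiating your sketch provably fails to produce the claimed constant, and the quadratic term must be handled by a genuinely different device (as in Sun's original argument). As written, the proposal is an accurate road map whose decisive step is missing.
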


\begin{Lemma}\label{QI:TLS:Theorem:VZ}
Given a matrix $A \in \mathbb{R}^{m \times n}$ and a vector $b \in \mathbb{R}^{m}$ satisfying the sample model and data structure, the parameters $(\epsilon, \delta, k)$ in the specified range of QiSVD algorithm. QiSVD algorithm outputs the approximate right singular matrix $\hat{V}$, then with probability $1- \delta$, there exists a column orthonormal matrix $Z \in \mathbb{R}^{(n+1) \times l}$ such that
\begin{equation}\label{QI:TLS:Eq:U10}
    \|\hat{V} - Z\|_F  < \xi.
\end{equation}
\end{Lemma}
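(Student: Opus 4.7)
The plan is to reduce the approximate orthonormality $\|\hat{V}^T\hat{V}-I\|_F\le\xi$ established in Lemma~\ref{QI:TLS:Lemma:V2TVF} to an honest orthonormalization error, using the Cholesky perturbation bound in Lemma~\ref{QI:TLS:Lemma:ERI}. The idea is to write $\hat{V}^T\hat{V}=I+E$ with $E:=\hat{V}^T\hat{V}-I$, view $I=I\cdot I^T$ as the (trivial) Cholesky factorization of the unperturbed matrix, and then extract a nearby orthonormal basis by dividing out the Cholesky factor of the perturbed Gram matrix.

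First, I would note that by Lemma~\ref{QI:TLS:Lemma:V2TVF} we have $\|E\|_2\le\xi$ and $\|E\|_F\le\xi$, and that $\xi=\epsilon/(2\epsilon+4)<1/2$, so $\hat{V}^T\hat{V}=I+E$ is Hermitian positive definite and $\|I^{-1}\|_2\|E\|_2=\|E\|_2\le\xi<1$, i.e.\ the hypothesis of Lemma~\ref{QI:TLS:Lemma:ERI} is satisfied with $A=I$ and Cholesky factor $L=I$. Applying that lemma yields a matrix $G\in\mathbb{R}^{l\times l}$ such that
\begin{equation*}
\hat{V}^T\hat{V}=I+E=(I+G)(I+G)^T,\qquad \|G\|_F\;\le\;\frac{\kappa(I)}{\sqrt{2(1-\|E\|_2)}}\,\frac{\|E\|_F}{\|I\|_2}\;\le\;\frac{\xi}{\sqrt{2(1-\xi)}}.
\end{equation*}

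Next, I would define the candidate orthonormal basis
\begin{equation*}
Z\;:=\;\hat{V}(I+G)^{-T}\;\in\;\mathbb{R}^{(n+1)\times l},
\end{equation*}
and verify column orthonormality by direct computation: $Z^TZ=(I+G)^{-1}\hat{V}^T\hat{V}(I+G)^{-T}=(I+G)^{-1}(I+G)(I+G)^T(I+G)^{-T}=I$. Then $\hat{V}=Z(I+G)^T$, hence $\hat{V}-Z=ZG^T$, and since $\|Z\|_2=1$,
\begin{equation*}
\|\hat{V}-Z\|_F\;=\;\|ZG^T\|_F\;\le\;\|Z\|_2\,\|G\|_F\;\le\;\frac{\xi}{\sqrt{2(1-\xi)}}.
\end{equation*}

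The final step is the strict inequality. Because $\xi<1/2$, we have $2(1-\xi)>1$, so $\xi/\sqrt{2(1-\xi)}<\xi$, giving $\|\hat{V}-Z\|_F<\xi$ with the same failure probability $\delta$ inherited from Lemma~\ref{QI:TLS:Lemma:V2TVF}. The only real obstacle is bookkeeping: making sure that the hypothesis $\|A^{-1}\|_2\|E\|_2<1$ in Lemma~\ref{QI:TLS:Lemma:ERI} is met (which reduces to $\xi<1$, automatic from the definition of $\xi$), and that $\xi<1/2$ is used exactly at the last step to convert $\xi/\sqrt{2(1-\xi)}$ to a strict bound by $\xi$. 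Everything else is algebraic manipulation of the Cholesky identity.
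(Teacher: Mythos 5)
Your proof is correct and follows essentially the same route as the paper: both reduce the statement to the Cholesky perturbation bound (Lemma~\ref{QI:TLS:Lemma:ERI}) applied to $\hat{V}^{T}\hat{V}=I+E$ with $\|E\|_F\leq\xi$ from Lemma~\ref{QI:TLS:Lemma:V2TVF}, and both close with $\xi<1/2\Rightarrow 1/\sqrt{2(1-\xi)}<1$. The only cosmetic difference is that the paper extracts the triangular factor via an explicit QR decomposition of $\hat{V}$ and sets $Z=Q\bigl[\begin{smallmatrix}I\\ \mathbf{0}\end{smallmatrix}\bigr]$, whereas you define $Z=\hat{V}(I+G)^{-T}$ directly; by uniqueness of the Cholesky factor these yield the same $Z$.
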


\begin{proof}
Using the QR decomposition of $\hat{V}$, i.e., $\hat{V} = Q\left[\begin{array}{c}
R \\
\mathbf{0}
\end{array}\right] \in \mathbb{R}^{(n+1) \times l}$, where $Q \in \mathbb{R}^{(n+1) \times (n+1)}$ be an orthonormal matrix and $R \in \mathbb{R}^{l \times l}$ be an upper triangular matrix. Then, applying Eq.\eqref{QI:TLS:Eq:VjIVF}, we have

\begin{equation}\label{QI:TLS:Eq:UURR}
    \|\hat{V}^{T}\hat{V} - I\|_F = \|R^{T}R - I\|_F \leq \xi.
\end{equation}
We find that $R^{T}R$ can be viewed as an approximate Cholesky decomposition of $I$.
That is, $R^{T}R = \hat{V}^{T}\hat{V} = I + \hat{E}$, where $\|\hat{E}\|_{2} \leq \xi$, $\|\hat{E}\|_F \leq \xi$.
Since $\xi = \frac{\epsilon}{2\epsilon+4}  < 1$, by lemma \ref{QI:TLS:Lemma:ERI}, it yields that
\begin{equation}\label{QI:TLS:Eq:RIF}
    \|R - I\|_F \leq \frac{\xi} {\sqrt{2(1 - \xi)}}.
\end{equation}
Since $0 < \xi < \frac{1}{2}$, we have $\frac{1} {\sqrt{2(1 - \xi)}} < 1$. The Eq.\eqref{QI:TLS:Eq:RIF} becomes
$\|R - I\|_F \leq \frac{\xi} {\sqrt{2(1 - \xi)}} < \xi$.

Setting $Z = Q\left[\begin{array}{c}
I \\
\mathbf{0}
\end{array}\right] \in \mathbb{R}^{(n+1) \times l}$, it follows that
\begin{equation}\label{QI:TLS:Eq:VZF}
    \|\hat{V} - Z\|_F  = \|R - I\|_F  < \xi.
\end{equation}
\end{proof}

\begin{Lemma}\label{QI:TLS:Theorem:CCZZ}
Given a matrix $A \in \mathbb{R}^{m \times n}$ and a vector $b \in \mathbb{R}^{m}$ satisfying the sample model and data structure, the parameters $(\epsilon, \delta, k)$ in the specified range of QiSVD algorithm. QiSVD algorithm outputs the approximate right singular matrix $\hat{V} \in \mathbb{R}^{(n+1) \times l}$, then with probability $1- \delta$, for $j \in [l]$, there exists a column orthonormal matrix $Z \in \mathbb{R}^{(n+1) \times l}$ such that
\begin{equation}\label{QI:TLS:Eq:U10}
    \|C - CZ_{j}Z_{j}^{T}\|^2_F  < \|C - C_{(j)}\|^2_F + 10 \epsilon \|C\|^2_F,
\end{equation}
where $C = [A,\,b]$, $C_{(j)} = \sum_{i= 1}^{j} \sigma_{i} u_{i} v_{i}^{T}$ and $Z_{j} = Z_{:,1:j}$.
\end{Lemma}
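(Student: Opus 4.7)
The strategy is to rerun the argument of Lemma \ref{QI:TLS:Lemma:CVVFCKE} but restricted to the leading $j \le l$ columns, and then transfer the resulting bound from $\hat V_j$ to the orthonormal $Z_j$. The crucial structural observation is that in the QR factorization used inside the proof of Lemma \ref{QI:TLS:Theorem:VZ}, the matrix $R \in \mathbb R^{l\times l}$ is upper triangular, so its first $j$ columns vanish below row $j$. This immediately yields the clean factorization $\hat V_j = Z_j R_{1:j,1:j}$. Moreover $\hat V_j^T \hat V_j = R_{1:j,1:j}^T R_{1:j,1:j}$ is the leading $j \times j$ principal block of $R^T R = I_l + \hat E$, so Eq.~\eqref{QI:TLS:Eq:UURR} gives $\|R_{1:j,1:j}\|_2^2 \le 1 + \xi$. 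Applied to $C \hat V_j = C Z_j R_{1:j,1:j}$, submultiplicativity then delivers $\|CZ_j\|_F^2 \ge \|C\hat V_j\|_F^2 / (1+\xi)$, which reduces the problem to lower-bounding $\|C\hat V_j\|_F^2$.

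The second step is to replay the computations of Lemmas \ref{QI:TLS:Lemma:VFVR} and \ref{QI:TLS:Lemma:SSWCE} with $k$ replaced by $j$. Because $j \le l$, we still have $\bar\sigma_j \ge \bar\sigma_l \ge \sqrt{\alpha}\,\|W\|_F$, so the spectral-norm bound $\|\bar\Sigma_j^{-1}\|_2 \le 1/(\sqrt{\alpha}\,\|W\|_F)$ on the truncated diagonal block remains valid. The trace/triangle-inequality chain of Lemma \ref{QI:TLS:Lemma:VFVR} then transplants verbatim to
\begin{equation*}
\|C\hat V_j\|_F^2 \ge \sum_{t=1}^{j} \bar\sigma_t^{\,2} - \frac{2\theta \|S\|_F^2}{\sqrt{\alpha}} - \theta\bigl(j + \sqrt{j}\,\xi\bigr)\|C\|_F^2,
\end{equation*}
and a Hoffman--Wielandt argument identical to Lemma \ref{QI:TLS:Lemma:SSWCE}, summing up to $j$ instead of $k$, supplies $\sum_{t=1}^{j}\bar\sigma_t^{\,2} \ge \sum_{t=1}^{j}\sigma_t^{\,2} - 2\sqrt{j}\,\theta\|S\|_F^2$. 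Combining these two inequalities with $\|S\|_F = \|C\|_F$ and $j \le k$, all perturbations collapse into a single bracket of the form $[\,2\sqrt{k}\,\theta + 2\theta/\sqrt{\alpha} + \theta(k + \sqrt{k}\,\xi)\,]\|C\|_F^2$, a quantity already bounded by a small multiple of $\xi$ in Eq.~\eqref{QI:TLS:Eq:klbesk}.

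Finally, dividing by $1+\xi$ and using $1/(1+\xi) \ge 1 - \xi$ costs at most one additional term of order $\xi\|C\|_F^2$, so altogether $\|CZ_j\|_F^2 \ge \sum_{t=1}^{j}\sigma_t^{\,2} - c\,\xi\,\|C\|_F^2$ for an explicit small constant $c$. Combining with $\|C - CZ_jZ_j^T\|_F^2 = \|C\|_F^2 - \|CZ_j\|_F^2$ and $\sum_{t=1}^{j}\sigma_t^{\,2} = \|C\|_F^2 - \|C - C_{(j)}\|_F^2$ proves the claimed inequality as soon as $c\xi < 10\epsilon$, which is immediate from $\xi = \epsilon/(2\epsilon+4)$. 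The main obstacle is exactly this final bookkeeping: one must aggregate the errors from the $C$-vs-$S$ sampling, the $S$-vs-$W$ sampling, the slack in $\|\hat V_j\|_F^2 \le j + \sqrt{j}\,\xi$, and the $1/(1+\xi)$ loss, and check that the combined constant still sits comfortably inside the generous $10\epsilon$ budget, which is essentially the same verification as the chain \eqref{QI:TLS:Eq:klbesk}--\eqref{QI:TLS:Eq:CCVVR} carried out for Lemma \ref{QI:TLS:Lemma:CVVFCKE}.
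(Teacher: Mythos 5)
Your proposal is correct, and it takes a genuinely different route from the paper. The paper transfers the bound from $\hat{V}_j$ to $Z_j$ additively: it writes $\hat{V}_j = Z_j + \bar{E}$ with $\|\bar{E}\|_F < \xi$ (from Lemma \ref{QI:TLS:Theorem:VZ}), expands $\|C - CZ_jZ_j^T\|_F$ by the triangle inequality to pick up extra terms of the form $(2\xi\sqrt{1+\xi}+\xi^2)\|C\|_F$, and then invokes a $j$-indexed version of Lemma \ref{QI:TLS:Lemma:CVVFCKE} (i.e.\ $\|C - C\hat{V}_j\hat{V}_j^T\|_F^2 < \|C-C_{(j)}\|_F^2 + \epsilon\|C\|_F^2$) before absorbing everything into the $10\epsilon$ budget via the estimate \eqref{QI:TLS:Eq:tri}. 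You instead exploit the exact multiplicative relation $\hat{V}_j = Z_j R_{1:j,1:j}$ coming from the upper-triangularity of $R$ in the QR factorization, bound $\|R_{1:j,1:j}\|_2^2 \le 1+\xi$ from the principal submatrix of $R^TR = I + \hat{E}$, and close with the Pythagorean identity $\|C - CZ_jZ_j^T\|_F^2 = \|C\|_F^2 - \|CZ_j\|_F^2$ (valid because $Z_j$ is column orthonormal) together with $j$-truncated reruns of Lemmas \ref{QI:TLS:Lemma:VFVR} and \ref{QI:TLS:Lemma:SSWCE}. Your version buys two things: it actually \emph{proves} the $j$-indexed analogue of Lemma \ref{QI:TLS:Lemma:CVVFCKE} that the paper's proof uses without explicit justification (Lemma \ref{QI:TLS:Lemma:CVVFCKE} as stated compares only $\hat{V}=\hat{V}_l$ against $C_{(k)}$), and it eliminates the two-case analysis on $l$ versus $t_{\rm max}$, since you compare $\sum_{t\le j}\bar{\sigma}_t^2$ directly with $\sum_{t\le j}\sigma_t^2$ rather than detouring through index $k$. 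The paper's route is shorter if one grants the $j$-indexed lemma; yours is more self-contained and lands well inside the $10\epsilon$ budget. All the individual steps you cite check out: $\bar{\sigma}_j \ge \bar{\sigma}_l \ge \sqrt{\alpha}\,\|W\|_F$ holds by step \ref{QI:TLS:QiTTLS:al:s6} of QiSVD, the leading block identity $(R^TR)_{1:j,1:j} = R_{1:j,1:j}^TR_{1:j,1:j}$ is a genuine consequence of triangularity, and $\|S\|_F = \|C\|_F = \|W\|_F$ lets you consolidate the error terms exactly as in \eqref{QI:TLS:Eq:klbesk}.
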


\begin{proof}
Based on lemma \ref{QI:TLS:Theorem:VZ}, denote $\hat{V}_{j} = \hat{V}_{:,1:j}$, for any $j \in [l]$, we have  $\|\hat{V}_{j} - Z_{j}\|_F < \xi$. Setting $\hat{V}_{j} = Z_{j} + \bar{E}$, with $\|\bar{E}\|_F < \xi$, using Eq.\eqref{QI:TLS:Eq:normV2}, it yields that

\begin{equation}\label{QI:TLS:Eq:CZZ}
\begin{aligned}
\|C - CZ_{j} Z_{j}^{T}\|_F
& = \|C - C(\hat{V}_j - \bar{E})(\hat{V}_j - \bar{E})^{T}\|_F \\
& \leq \|C - C\hat{V}_{j} \hat{V}^{T}_{j}\|_F + \|C\bar{E} \hat{V}^{T}_{j}\|_F + \|C \hat{V}_{j} \bar{E}^{T}\|_F + \|C\bar{E}\bar{E}^{T}\|_F \\
& < \|C - C\hat{V}_j \hat{V}_j^{T}\|_F + (2\xi \sqrt{1+\xi} + \xi^2) \|C\|_F.
\end{aligned}
\end{equation}
Squaring both sides of the Eq.\eqref{QI:TLS:Eq:CZZ}, we have
\begin{equation}\label{QI:TLS:Eq:CCZjZj}
\begin{aligned}
&~~~~\|C - CZ_{j}Z_{j}^{T}\|^2_F \\
& < \left[\|C - C\hat{V}_j \hat{V}_j^{T}\|_F + (2\xi \sqrt{1+\xi} + \xi^2) \|C\|_F \right]^2 \\
& = \|C - C\hat{V}_j \hat{V}_j^{T}\|^2_F + 2(2\xi \sqrt{1+\xi} + \xi^2)\|C - C\hat{V}_j \hat{V}_j^{T}\|_F \|C\|_F + (2\xi \sqrt{1+\xi} + \xi^2)^2 \|C\|^2_F.
\end{aligned}
\end{equation}
Using the fact that $\|C - C\hat{V}_j \hat{V}_j^{T}\|_F \leq \|C\|_F + \|C\|_F \|\hat{V}_j\|^2_{2}$
and Eq.\eqref{QI:TLS:Eq:normV2}, we have $\|C - C\hat{V}_j \hat{V}_j^{T}\|_F \leq (2+\xi)\|C\|_F$, applying lemma \ref{QI:TLS:Lemma:CVVFCKE}, we obtain
\begin{equation}\label{QI:TLS:Eq:AUUA}
\begin{aligned}
&~~~~\|C - CZ_{j}Z_{j}^{T}\|^2_F \\
& < \|C - C_{(j)}\|^2_F + \epsilon \|C\|_F^2 + 2 (2\xi \sqrt{1+\xi} + \xi^2)(2+\xi)\|C\|^2_F + (2\xi \sqrt{1+\xi} + \xi^2)^2 \|C\|^2_F   \\
& = \|C - C_{(j)} \|^2_F + \left[\epsilon + 2(2+\xi) (2\xi \sqrt{1+\xi} + \xi^2) + (2\xi \sqrt{1+\xi} + \xi^2)^2\right] \|C\|^2_F.
\end{aligned}
\end{equation}

Since $\xi = \frac{\epsilon}{2\epsilon +4}  < \frac{1}{2}$, it holds that
\begin{equation}\label{QI:TLS:Eq:tri}
\begin{aligned}
& ~~~~ \epsilon + 2(2+\xi) (2\xi \sqrt{1+\xi} + \xi^2) + (2\xi \sqrt{1+\xi} + \xi^2)^2 \\
& = \epsilon + \xi^4 + 6\xi^3 + 8\xi^2 + 4\xi^3 \sqrt{1+\xi} + 4\xi^2 \sqrt{1+\xi} + 8\xi \sqrt{1+\xi} \\
& < \epsilon + 15\xi + 16\xi \sqrt{1+\xi}
< \epsilon + 36\xi
= \frac{\epsilon^2 + 20\epsilon}{\epsilon+2}
< 10\epsilon.
\end{aligned}
\end{equation}
Then the results follows from Eqs.\eqref{QI:TLS:Eq:AUUA} and \eqref{QI:TLS:Eq:tri}.
\end{proof}

\begin{Theorem}\label{QI:TLS:Theorem:Vvev}
Suppose that a matrix $A \in \mathbb{R}^{m \times n}$ and a vector $b \in \mathbb{R}^{m}$ satisfying the sample model and data structure, the parameters $(\epsilon, \delta, k)$ in the specified range of QiSVD algorithm. Suppose that $C = [A,\,b]$ has singular values $\sigma_{1}, \sigma_{2}, \cdots, \sigma_{n + 1}$.
Assume that for $ l \leq q \leq k \leq n$, $\sigma^2_{i} - \sigma^2_{i+1} = \eta_{i}$, $i \in [q]$ and $\eta = {\rm min}\{\eta_{1}, \eta_{2}, \cdots, \eta_{q}\}$. Suppose that $V$ is the right singular matrix of $C$ defined in the introduction, and QiSVD algorithm outputs the approximate right singular matrix $\hat{V} \in \mathbb{R}^{(n+1) \times l}$, if $\eta \geq 20 \epsilon \|C\|^2_F$, then with probability $1- \delta$, it holds that
\begin{equation}\label{QI:TLS:Eq:VLV}
    \|V_{l} - \hat{V}\|_F \leq \sqrt{ \frac{40 k \epsilon}{\eta} } \|C\|_F + \xi,
\end{equation}
where $V_{l} = V_{:,1:l}$.
\end{Theorem}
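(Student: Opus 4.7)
The plan is to separately handle the failure of $\hat V$ to be exactly column-orthonormal and the subspace-recovery error, then invoke the spectral-gap hypothesis to convert the latter into a vector-level bound.

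First I interpose the column-orthonormal matrix $Z \in \mathbb{R}^{(n+1)\times l}$ furnished by Lemma \ref{QI:TLS:Theorem:VZ}, which satisfies $\|\hat V - Z\|_F < \xi$. By the triangle inequality,
$$\|V_l - \hat V\|_F \le \|V_l - Z\|_F + \|Z - \hat V\|_F < \|V_l - Z\|_F + \xi,$$
so the remaining task is to bound $\|V_l - Z\|_F \le \sqrt{40k\epsilon/\eta}\,\|C\|_F$.

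For each prefix $Z_j := Z_{:,1:j}$, $j\in[l]$, Lemma \ref{QI:TLS:Theorem:CCZZ} combined with the Pythagorean identity $\|C\|_F^2 = \|CZ_j\|_F^2 + \|C - CZ_jZ_j^T\|_F^2$ and the Eckart–Young equality $\|C - C_{(j)}\|_F^2 = \sum_{i>j}\sigma_i^2$ yields $\|CZ_j\|_F^2 > \sum_{i=1}^j \sigma_i^2 - 10\epsilon\|C\|_F^2$. Expanding in the right singular basis of $C$ gives $\|CZ_j\|_F^2 = \sum_i \sigma_i^2 (M_j)_{ii}$, where $M_j := V^T Z_j Z_j^T V$ is a rank-$j$ orthogonal projection in that basis whose diagonal entries lie in $[0,1]$ and sum to $j$. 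Writing $\gamma_j := \sum_{i>j}(M_j)_{ii} = \sum_{i\le j}(1 - (M_j)_{ii})$, splitting the sum at $i=j+1$ and exploiting the gap $\sigma_j^2 - \sigma_{j+1}^2 \ge \eta$ (valid since $j \le l \le q$) forces
$$\gamma_j \le \frac{10\epsilon\|C\|_F^2}{\eta}.$$
The trace identity $\|Z_j Z_j^T - V_jV_j^T\|_F^2 = 2\gamma_j$ then supplies a uniform Frobenius bound on every prefix projection difference.

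Next I upgrade the projection-distance bound to a column-wise distance bound. The telescoping identity $z_jz_j^T - v_jv_j^T = (Z_jZ_j^T - V_jV_j^T) - (Z_{j-1}Z_{j-1}^T - V_{j-1}V_{j-1}^T)$ together with $(a+b)^2 \le 2a^2+2b^2$ gives a uniform estimate $\|z_jz_j^T - v_jv_j^T\|_F^2 \lesssim \epsilon\|C\|_F^2/\eta$. Choosing the sign of each $v_j$ so that $v_j^T z_j \ge 0$ (permitted because singular vectors are defined only up to sign), I apply the elementary inequality $\|v_j - z_j\|^2 \le \|v_jv_j^T - z_jz_j^T\|_F^2$, valid for unit vectors with nonnegative inner product. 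Summing over $j\in[l]$ and using $l\le k$ yields $\|V_l - Z\|_F^2 \le 40k\epsilon\|C\|_F^2/\eta$, which, combined with the triangle inequality above, delivers the stated bound.

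The main obstacle I anticipate is the third step, converting the residual estimate of Lemma \ref{QI:TLS:Theorem:CCZZ} into the prefix projection-distance bound via the PSD/diagonal structure of $M_j$ and the spectral gap. This is precisely where the hypothesis $\eta \ge 20\epsilon\|C\|_F^2$ becomes essential: it forces $\gamma_j \le 1/2$, so the two projections are close enough that the subsequent rank-one telescoping and the unit-vector identity for $\|v_j - z_j\|$ are both meaningful, and the aggregation over $j \le l \le k$ accrues only the benign factor $k$ appearing in the final bound.
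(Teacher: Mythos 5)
Your overall architecture coincides with the paper's: interpose the orthonormal $Z$ from Lemma \ref{QI:TLS:Theorem:VZ}, use Lemma \ref{QI:TLS:Theorem:CCZZ} together with the Pythagorean/Eckart--Young identities and the spectral gap to control the prefix overlaps, convert to a column-by-column estimate, sum over $j\le l\le k$, and finish with the triangle inequality. Your derivation of $\gamma_j = j - \|Z_j^T V_j\|_F^2 \le 10\epsilon\|C\|_F^2/\eta$ is exactly the paper's inequality \eqref{QI:TLS:Eq:AUUAEE} (the paper obtains it with $\eta_j$ in place of $\eta$), and your identities $\|Z_jZ_j^T - V_jV_j^T\|_F^2 = 2\gamma_j$ and $\|v_j - z_j\|^2 \le \|v_jv_j^T - z_jz_j^T\|_F^2$ (for unit vectors with nonnegative inner product, after the sign normalization, which you handle more explicitly than the paper does) are both correct. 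Where the paper runs an entrywise combinatorial argument on $t_{a,b}=[(Z^T)_{a,:}V_{:,b}]^2$, you phrase the same content in terms of prefix projections; that is a legitimate repackaging.

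There is, however, a concrete quantitative gap in your telescoping step. With $\|P_j - Q_j\|_F^2 = 2\gamma_j \le 20\epsilon\|C\|_F^2/\eta$ (writing $P_j = Z_jZ_j^T$, $Q_j = V_jV_j^T$), the bound $(a+b)^2\le 2a^2+2b^2$ gives $\|z_jz_j^T - v_jv_j^T\|_F^2 \le 2\|P_j-Q_j\|_F^2 + 2\|P_{j-1}-Q_{j-1}\|_F^2 \le 80\epsilon\|C\|_F^2/\eta$, and summing yields $\|V_l - Z\|_F^2 \le 80k\epsilon\|C\|_F^2/\eta$, i.e.\ $\sqrt{80k\epsilon/\eta}\,\|C\|_F$ rather than the stated $\sqrt{40k\epsilon/\eta}\,\|C\|_F$; the ``$\lesssim$'' in your write-up hides a genuine factor of $2$. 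The fix is to observe that the cross term is nonnegative: since $P_{j-1}\preceq P_j$ and $Q_{j-1}\preceq Q_j$,
\begin{equation*}
\langle P_j - Q_j,\, P_{j-1}-Q_{j-1}\rangle = (j-1) - \|Z_j^TV_{j-1}\|_F^2 - \|V_j^TZ_{j-1}\|_F^2 + (j-1) \ge 0,
\end{equation*}
because each of the two middle traces is at most $j-1$. Hence $\|z_jz_j^T - v_jv_j^T\|_F^2 \le \|P_j-Q_j\|_F^2 + \|P_{j-1}-Q_{j-1}\|_F^2 \le 40\epsilon\|C\|_F^2/\eta$, which recovers the constant. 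This nonnegativity is precisely what the paper's row/column-sum inequalities $\sum_a t_{a,b}\le 1$ and $\sum_b t_{a,b}\le 1$ encode in \eqref{QI:TLS:Eq:UUUT22}--\eqref{QI:TLS:Eq:UUUT3}; with that one-line addition your argument closes.
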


\begin{proof}
The matrix $Z \in \mathbb{R}^{(n+1) \times l}$ builds a bridge to evaluate the upper bound of $\|V_l - \hat{V}\|_F $ in Eq.\eqref{QI:TLS:Eq:VLV}. In fact, we need to estimate two items $\|V_l - Z\|_F$ and $\|Z - \hat{V} \|_F$. The second is somewhat easier. For the first item, we need the trace of $Z^{T}V_l$, which depends on the estimate about $\|Z_{j}^{T} V_j\|_F^2$.

Since $C =  U \Sigma V^{T}$ and $\sigma^2_{i} - \sigma^2_{i+1} = \eta_{i}$, $i \in [q]$, based on lemma \ref{QI:TLS:Theorem:CCZZ}, for $j \in [l]$, we have
\begin{equation}\label{QI:TLS:Eq:AUUAE}
\begin{aligned}
   10 \epsilon \|C\|_F^2
   & > \|C - CZ_{j} Z_{j}^{T}\|^2_F - \|C - C_{(j)}\|^2_F\\
   & = (\|C\|^2_F - \|CZ_{j}\|^2_F) - (\|C\|^2_F -\|C_{(j)}\|^2_F)\\
   & = \|C_{(j)}\|^2_F - \|CZ_{j}\|^2_F \\
   &  = \|C_{(j)}\|^2_F - \|Z_{j}^{T} V \Sigma^{T} U^{T}\|^2_F \\
   & = \sum_{i=1}^{j}\sigma^2_{i} - \sum_{i=1}^{n+1}\sigma^2_{i} \|Z_{j}^{T} V_{:,i}\|^2_{2} \\
   & = \sum_{i=1}^{j}\sigma^2_{i}(1 - \|Z_{j}^{T} V_{:,i} \|^2_{2}) - \sum_{i=j+1}^{n+1}\sigma^2_{i}\| Z_{j}^{T} V_{:,i} \|^2_{2}\\
   & \geq \sigma^2_{j}\sum_{i=1}^{j}(1 - \| Z_{j}^{T} V_{:,i} \|^2_{2}) - \sigma^2_{j+1}\sum_{i=j+1}^{n+1}\| Z_{j}^{T} V_{:,i} \|^2_{2}\\
   & = \sigma^2_{j}(j - \| Z_{j}^{T} V_j\|^2_F) - \sigma^2_{j+1}\left(\| Z_{j}^{T} V\|^2_F - \| Z_{j}^{T} V_j\|^2_F \right)\\
   & = \sigma^2_{j}(j - \| Z_{j}^{T} V_j\|^2_F) - \sigma^2_{j+1}\left(j - \| Z_{j}^{T} V_j\|^2_F \right)\\
   & = \eta_j (j - \|Z_{j}^{T} V_j \|^2_F),
\end{aligned}
\end{equation}
which implies that
\begin{equation}\label{QI:TLS:Eq:AUUAEE}
\begin{aligned}
   \|Z_{j}^{T} V_j\|_F^2  \geq j - \frac{10 \epsilon \|C\|_F^2}{\eta_j}.
\end{aligned}
\end{equation}

For $a,b \in [l]$, set $t_{a,b} = [(Z^{T})_{a,:} V_{:,b}]^2$. Based on Eq.\eqref{QI:TLS:Eq:AUUAEE}, we have $\sum_{a,b=1}^{j} t_{a,b} \geq j - \frac{10 \epsilon \|C\|_F^2}{\eta_j}$. It yields that $\sum_{a,b=1}^{j-1} t_{a,b} \geq j-1 - \frac{10 \epsilon \|C\|_F^2}{\eta_{j-1}}$. Adding the two inequality, we gain
\begin{equation}\label{QI:TLS:Eq:UUUT11}
\sum_{a,b=1}^{j-1} t_{a,b} + \sum_{a,b=1}^{j} t_{a,b} \geq  2j-1 - \left(\frac{1}{\eta_j} + \frac{1}{\eta_{j-1}}\right)10 \epsilon \|C\|_F^2.
\end{equation}

Due to $\|Z_{j}^{T} V_{:,j}\|^2_{2} \leq \|Z_{j}^{T}\|^2_{2} \|V_{:,j}\|^2_{2} = 1$, we gain $\sum_{a=1}^{j} t_{a,b} \leq 1$.
$\forall \, b \in [l]$, it follows that
\begin{equation}\label{QI:TLS:Eq:UUUT22}
- \sum_{b=1}^{j-1}\sum_{a=1}^{j} t_{a,b} \geq 1 - j.
\end{equation}

Since $\|(Z_{:,i})^{T}V_{j}\|^2_{2} \leq \|(Z_{:,i})^{T}\|^2_{2} \|V_{j}\|^2_{2} = 1$, we have $\sum_{b=1}^{j} t_{a,b} \leq 1$.
$\forall \, a \in [l]$, it holds that
\begin{equation}\label{QI:TLS:Eq:UUUT223}
- \sum_{a=1}^{j-1}\sum_{b=1}^{j} t_{a,b} \geq 1 - j.
\end{equation}
Combining Eqs.\eqref{QI:TLS:Eq:UUUT11}, \eqref{QI:TLS:Eq:UUUT22} with \eqref{QI:TLS:Eq:UUUT223}, we have
\begin{equation}\label{QI:TLS:Eq:UUUT3}
1 - \left(\frac{1}{\eta_j} + \frac{1}{\eta_{j-1}}\right)10 \epsilon \|C\|_F^2
\leq \sum_{a,b=1}^{j-1} t_{a,b} + \sum_{a,b=1}^{j} t_{a,b} - \sum_{b=1}^{j-1}\sum_{a=1}^{j} t_{a,b} - \sum_{a=1}^{j-1}\sum_{b=1}^{j} t_{a,b}    = t_{j,j},
\end{equation}
which implies the following relation $[(Z^{T})_{j,:} V_{:,j}]^2 = t_{j,j} \geq 1 - \left(\frac{1}{\eta_j} + \frac{1}{\eta_{j-1}}\right)10 \epsilon \|C\|_F^2$. Since $\eta = {\rm min}\{\eta_{1}, \eta_{2}, \cdots, \eta_{q}\}$ and $\eta \geq 20 \epsilon \|C\|^2_F$, we have
$$(Z^{T})_{j,:} V_{:,j} = \sqrt{t_{j,j}} \geq \sqrt{1 - \left(\frac{1}{\eta_j} + \frac{1}{\eta_{j-1}}\right)10 \epsilon \|C\|_F^2} \geq \sqrt{ 1 -  \frac{20 \epsilon \|C\|_F^2}{\eta}} \geq  1 -  \frac{20 \epsilon \|C\|_F^2}{\eta}.$$
By induction method, we can compute the low bound on ${\rm Tr}(Z^{T}V_l)$, it holds that
\begin{equation}\label{QI:TLS:Eq:UUUT1}
\begin{aligned}
{\rm Tr}(Z^{T}V_l)
& = (Z^{T})_{1,:} V_{:,1} + (Z^{T})_{2,:} V_{:,2} + \cdots + (Z^{T})_{l,:} V_{:,l}  \\
& \geq \left[1 -  \frac{20 \epsilon \|C\|_F^2}{\eta}\right] + \left[1 -  \frac{20 \epsilon \|C\|_F^2}{\eta}\right] + \cdots + \left[1 -  \frac{20 \epsilon \|C\|_F^2}{\eta}\right] \\
& = l -  \frac{20l \epsilon \|C\|_F^2}{\eta}.
\end{aligned}
\end{equation}

By a direct computation, we obtain
\begin{equation}\label{QI:TLS:Eq:VLZF}
\|V_l - Z\|_F^2
= {\rm Tr} [(V_l - Z)^{T}(V_l - Z)] 
= 2l - 2 {\rm Tr}(Z^{T}V_l)
\leq 2l - 2\left[l -  \frac{20l \epsilon \|C\|_F^2}{\eta}\right] 
= \frac{40l \epsilon \|C\|_F^2}{\eta}.
\end{equation}

Since $l \leq k$, applying Eqs.\eqref{QI:TLS:Eq:VZF} and \eqref{QI:TLS:Eq:VLZF}, we obtain
\begin{equation}\label{QI:TLS:Eq:UUUT0}
\begin{aligned}
\| V_l - \hat{V} \|_F
\leq \| V_l - Z \|_F + \|Z - \hat{V} \|_F
\leq \sqrt{ \frac{40 k \epsilon}{\eta} } \|C\|_F + \xi.
\end{aligned}
\end{equation}

\end{proof}

\begin{Theorem}\label{QI:TLS:Theorem:UI}
Suppose that a matrix $A \in \mathbb{R}^{m \times n}$ and a vector $b \in \mathbb{R}^{m}$ satisfying the sample model and data structure, the parameters $(\epsilon, \delta, k)$ in the specified range of QiSVD algorithm . Assume $m \geq n + 1$ and $d \leq l \leq q \leq k \leq n$. Suppose that $C = [A,\,b]$ has singular values $\sigma_{1}, \sigma_{2}, \cdots, \sigma_{n + 1}$ and $A$ has singular values $\sigma^{A}_{1}, \sigma^{A}_{2}, \cdots, \sigma^{A}_{n}$. Assume that $\sigma^2_i - \sigma^2_{i+1} = \eta_{i}$, $i \in [q ]$, $\eta = {\rm min}\{\eta_1, \eta_2, \cdots, \eta_q\}$ and $\sigma^{A}_q > \sigma_{q+1}$.
QiTTLS and TTLS algorithms output the approximate TTLS solution $x_{\rm QiTTLS} = (\hat{\mathbf{V}}_{11}^T)^{\dag}\hat{\mathbf{v}}_{21}^{T}$ and $x_{\rm TTLS} = (\mathbf{V}_{11}^T)^{\dag} \mathbf{v}_{21}^{T}$, respectively.
Assume that $\mathbf{V}_{11} \in \mathbb{R}^{n \times d}$ has singular values $\tau_1, \tau_2, \cdots, \tau_d$. If $\eta \geq 20 \epsilon \|C\|^2_F$, $\|x_{\rm TTLS}\|_{2} \neq 0$, $\tau_{d} > \epsilon_v$ and $\|b\|_{2} > \sigma_{d+1}$, then with probability $1- \delta$, it holds that
\begin{equation}\label{QI:TTLS:Eq:xqx}
\frac{\| x_{\rm TTLS} - x_{\rm QiTTLS}\|_{2}}{\|x_{\rm TTLS}\|_{2}}
\leq \frac{(\sqrt{2} \epsilon_{v} + \tau_{d} + 1 )}{\tau_{d} - \epsilon_v}  \frac{2 \sigma_1}{\|b\|_{2} - \sigma_{d+1}},
\end{equation}
where $\epsilon_v = \sqrt{ \frac{40 k \epsilon}{\eta} } \|C\|_F + \xi$.
\end{Theorem}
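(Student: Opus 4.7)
The plan is to leverage Theorem \ref{QI:TLS:Theorem:Vvev} together with standard perturbation bounds for the Moore--Penrose inverse, and then lower bound $\|x_{\rm TTLS}\|_2$ using the fact that $b$ appears as the last column of $C$.

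First I would transfer the bound on $\hat{V}$ to its two relevant sub-blocks. Since $d \leq l$, truncating $V_l$ and $\hat V$ to their first $d$ columns cannot increase the Frobenius distance, so Theorem \ref{QI:TLS:Theorem:Vvev} yields $\|V_{:,1:d} - \hat V_{:,1:d}\|_F \leq \epsilon_v$. Partitioning rowwise as in Algorithms \ref{QI:TLS:QiTTLS:a2} and \ref{QI:TLS:TTLS:a3} gives $\|\mathbf{V}_{11} - \hat{\mathbf{V}}_{11}\|_F \leq \epsilon_v$ and $\|\mathbf{v}_{21} - \hat{\mathbf{v}}_{21}\|_2 \leq \epsilon_v$, with the two squared norms summing to at most $\epsilon_v^2$.

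Second, I would control the pseudoinverse perturbation. Set $B := \mathbf{V}_{11}^T$ and $\hat B := \hat{\mathbf{V}}_{11}^T$. By Lemma \ref{QI:SLSMC:Lemma:SmSn} the smallest singular value satisfies $\sigma_{\min}(\hat B) \geq \tau_d - \epsilon_v$, which is strictly positive by hypothesis, so $B$ and $\hat B$ have the same rank $d$. A Wedin-type bound of the form $\|B^\dag - \hat B^\dag\|_2 \leq \sqrt{2}\,\|B^\dag\|_2\|\hat B^\dag\|_2\|B - \hat B\|_2$, together with $\|B^\dag\|_2 = 1/\tau_d$ and $\|\hat B^\dag\|_2 \leq 1/(\tau_d - \epsilon_v)$, then gives a fully quantitative estimate on $\|(\mathbf{V}_{11}^T)^\dag - (\hat{\mathbf{V}}_{11}^T)^\dag\|_2$ in terms of $\epsilon_v$ and $\tau_d$. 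I would then decompose
\begin{equation*}
x_{\rm TTLS} - x_{\rm QiTTLS} \;=\; \bigl[(\mathbf{V}_{11}^T)^\dag - (\hat{\mathbf{V}}_{11}^T)^\dag\bigr]\mathbf{v}_{21}^T \;+\; (\hat{\mathbf{V}}_{11}^T)^\dag\bigl[\mathbf{v}_{21}^T - \hat{\mathbf{v}}_{21}^T\bigr],
\end{equation*}
apply the triangle inequality, and use $\|\mathbf{v}_{21}\|_2 \leq 1$ (a row of a matrix with orthonormal columns has norm at most one) for the first term and $\|\mathbf{v}_{21}^T - \hat{\mathbf{v}}_{21}^T\|_2 \leq \epsilon_v$ for the second, assembling a numerator that collects the contributions $\sqrt{2}\epsilon_v$, $\tau_d$, and $1$ over the common denominator $\tau_d - \epsilon_v$.

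Third, I would lower bound $\|x_{\rm TTLS}\|_2$ through $\|b\|_2$. From $C = U\Sigma V^T$ and $b = C e_{n+1}$ one has $\|b\|_2^2 = \sum_{i=1}^{n+1}\sigma_i^2 v_{n+1,i}^2$; since the last row of $V$ has unit norm and $\sum_{i=1}^{d} v_{n+1,i}^2 = \|\mathbf{v}_{21}\|_2^2$, splitting the sum at index $d$ gives $\|b\|_2^2 \leq \sigma_1^2\|\mathbf{v}_{21}\|_2^2 + \sigma_{d+1}^2$, so $\|\mathbf{v}_{21}\|_2 \geq (\|b\|_2 - \sigma_{d+1})/\sigma_1$ under the assumption $\|b\|_2 > \sigma_{d+1}$. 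Because $\mathbf{V}_{11}$ is a submatrix of an orthogonal matrix, $\tau_d \leq 1$, and from $\mathbf{V}_{11}^T\mathbf{V}_{11} = I - \mathbf{v}_{21}^T\mathbf{v}_{21}$ one gets $\|x_{\rm TTLS}\|_2 = \|\mathbf{v}_{21}\|_2/\tau_d \geq \|\mathbf{v}_{21}\|_2 \geq (\|b\|_2 - \sigma_{d+1})/\sigma_1$. Dividing the absolute bound from the previous step by this lower bound produces an expression of the advertised shape.

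The main obstacle I expect is purely bookkeeping: three independent error sources (pseudoinverse perturbation, right-hand side perturbation, and the lower bound on $\|x_{\rm TTLS}\|_2$) must be combined so that the final expression matches the specific form $(\sqrt{2}\epsilon_v + \tau_d + 1)/(\tau_d - \epsilon_v) \cdot 2\sigma_1/(\|b\|_2 - \sigma_{d+1})$. Matching the exact constants will likely require slightly loosening each intermediate bound, e.g.\ using $\|\hat{\mathbf{v}}_{21}\|_2 \leq 1 + \epsilon_v$ when convenient and exploiting the slack $\sigma_1 \geq \|b\|_2 \geq \|b\|_2 - \sigma_{d+1}$ to homogenize the two additive error terms into a single fraction.
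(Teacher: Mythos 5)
Your proposal is correct and follows essentially the same route as the paper: the same splitting of $x_{\rm TTLS}-x_{\rm QiTTLS}$ into a pseudoinverse-perturbation term and a right-hand-side perturbation term, Weyl's inequality to get $\sigma_{\min}(\hat{\mathbf{V}}_{11}^T)\ge\tau_d-\epsilon_v$, and the reduction of $\|x_{\rm TTLS}\|_2$ to $\|\mathbf{v}_{21}\|_2$ followed by the bound $1/\|\mathbf{v}_{21}\|_2\le 2\sigma_1/(\|b\|_2-\sigma_{d+1})$. The only differences are cosmetic: you invoke the quantitative Wedin norm bound with constant $\sqrt{2}$ where the paper uses Wedin's additive identity and discards the $I-\mathbf{V}_{11}^T(\mathbf{V}_{11}^T)^{\dag}$ term via full row rank, and you rederive from $b=Ce_{n+1}$ the lower bound on $\|\mathbf{v}_{21}\|_2$ that the paper imports from Theorem 4.1 of \cite{MSW92}.
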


\begin{proof}
Set $\mathbf{V}_{11} = \hat{\mathbf{V}}_{11} + \check{E}$ and $\mathbf{v}_{21}^{T} = \hat{\mathbf{v}}_{21}^{T} + e$, respectively.
Let $A, B \in \mathbb{R}^{m \times n}$, according to \cite{PAW73}, it yields that
$$B^{\dag} - A^{\dag} = -B^{\dag}(B-A)A^{\dag} + B^{\dag}(I - AA^{\dag}) - (I - B^{\dag}B)A^{\dag}.$$
By a direct computation, we obtain
\begin{equation}\label{QI:TTLS:Eq:vv2vv0}
    \begin{aligned}
    & \ \ x_{\rm TTLS} - x_{\rm QiTTLS} \\
    & = (\hat{\mathbf{V}}_{11}^T)^{\dag}\hat{\mathbf{v}}_{21}^{T} - (\mathbf{V}_{11}^T)^{\dag} \mathbf{v}_{21}^{T}\\
    & = (\hat{\mathbf{V}}_{11}^T)^{\dag}(\mathbf{v}_{21}^{T} - e) - (\mathbf{V}_{11}^T)^{\dag} \mathbf{v}_{21}^{T}\\
    & = [(\hat{\mathbf{V}}_{11}^T)^{\dag} -(\mathbf{V}_{11}^T)^{\dag}]\mathbf{v}_{21}^{T} - (\hat{\mathbf{V}}_{11}^T)^{\dag} e \\
    & = -(\hat{\mathbf{V}}_{11}^T)^{\dag} \check{E}^T (\mathbf{V}_{11}^T)^{\dag}\mathbf{v}_{21}^{T} + (\hat{\mathbf{V}}_{11}^T)^{\dag} [I - \mathbf{V}_{11}^T {(\mathbf{V}_{11}^T)}^{\dag}]\mathbf{v}_{21}^{T} - [ I - {(\hat{\mathbf{V}}_{11}^T)}^{\dag} \hat{\mathbf{V}}_{11}^T] (\mathbf{V}_{11}^T)^{\dag}\mathbf{\mathbf{v}}_{21}^{T} - (\hat{\mathbf{V}}_{11}^T)^{\dag} e\\
    & = -(\hat{\mathbf{V}}_{11}^T)^{\dag} \check{E}^T x_{\rm TTLS} + (\hat{\mathbf{V}}_{11}^T)^{\dag} [I - \mathbf{V}_{11}^T {(\mathbf{V}_{11}^T)}^{\dag}]\mathbf{v}_{21}^{T} - [I -  {(\hat{\mathbf{V}}_{11}^T)}^{\dag}\hat{\mathbf{V}}_{11}^T] x_{\rm TTLS} - (\hat{\mathbf{V}}_{11}^T)^{\dag} e.
    \end{aligned}
\end{equation}

Based on $\sigma^{A}_q > \sigma_{q+1}$, it yields that $\mathbf{V}_{11}$ is full column rank, we have
$$I - \mathbf{V}_{11}^T  {(\mathbf{V}_{11}^T)}^{\dag}  = I -   \mathbf{V}_{11}^T  \mathbf{V}_{11} {(\mathbf{V}_{11}^T \mathbf{V}_{11}})^{-1} = 0,$$
and the Eq.\eqref{QI:TTLS:Eq:vv2vv0} becomes
\begin{equation}\label{QI:TTLS:Eq:vv2vv1}
x_{\rm TTLS} - x_{\rm QiTTLS} = -(\hat{\mathbf{V}}_{11}^T)^{\dag} \check{E}^T x_{\rm TTLS} - x_{\rm TTLS} +  {(\hat{\mathbf{V}}_{11}^T)}^{\dag}\hat{\mathbf{V}}_{11}^T x_{\rm TTLS} - (\hat{\mathbf{V}}_{11}^T)^{\dag} e.
\end{equation}

Based on the theorem \ref{QI:TLS:Theorem:Vvev}, $\hat{\mathbf{V}}_{11} = \hat{V}_{1:n,1:d}$ and $\hat{\mathbf{v}}_{21}= \hat{V}_{n+1,1:d}$, it yields that
\begin{equation}\label{QI:TTLS:Eq:Eekv}
    \epsilon^2_{v} \geq \|V_l - \hat{V}\|^2_F  \geq \|\mathbf{V}_{11} - \hat{\mathbf{V}}_{11}\|^2_F + \|\mathbf{v}_{21} - \hat{\mathbf{v}}_{21}\|^2_{2} = \|\check{E}\|^2_F + \|e\|^2_{2}.
\end{equation}

Assume that $\hat{\mathbf{V}}_{11} \in \mathbb{R}^{n \times d}$  has singular values $\hat{\sigma}_{1}, \hat{\sigma}_{2}, \cdots, \hat{\sigma}_{d}$. Using the lemma \ref{QI:SLSMC:Lemma:SmSn} and Eq.\eqref{QI:TTLS:Eq:Eekv}, for any $i \in [d]$, we have
\begin{equation}\label{QI:TTLS:Eq:sigmav2v}
|\tau_{i}(\mathbf{V}^{T}_{11}) - \hat{\sigma}_{i}(\hat{\mathbf{V}}^{T}_{11})| \leq \|\mathbf{V}^{T}_{11} - \hat{\mathbf{V}}^{T}_{11}\|_{2} = \|\mathbf{V}_{11} - \hat{\mathbf{V}}_{11}\|_{2} \leq \|\mathbf{V}_{11} - \hat{\mathbf{V}}_{11}\|_F \leq  \epsilon_v,
\end{equation}
which implies the following relationship
$$\hat{\sigma}_{1}(\hat{\mathbf{V}}^{T}_{11}) \leq \tau_{1}(\mathbf{V}^{T}_{11}) +  \epsilon_v \leq 1 +  \epsilon_v$$
and
$$\hat{\sigma}_{d}(\hat{\mathbf{V}}^{T}_{11}) \geq \tau_{d}(\mathbf{V}^{T}_{11})- \epsilon_v = \tau_{d} -  \epsilon_v.$$
Moreover, it yields that $\|(\hat{\mathbf{V}}_{11}^T)^{\dag}\|_{2} \leq \frac{1}{\tau_{d} -  \epsilon_v}$ and $\|\hat{\mathbf{V}}_{11}^T\|_{2} \leq 1 +  \epsilon_v$.

By directly computation, we check that
\begin{equation}\label{QI:TTLS:Eq:x2qx}
    \begin{aligned}
& \ \ \|x_{\rm TTLS} - x_{\rm QiTTLS}\|_{2}  \\
& = \|(\hat{\mathbf{V}}_{11}^T)^{\dag}\hat{\mathbf{v}}_{21}^{T} - (\mathbf{V}_{11}^T)^{\dag} \mathbf{v}_{21}^{T}\|_{2} \\
& = \|-(\hat{\mathbf{V}}_{11}^T)^{\dag} \check{E}^T x_{\rm TTLS} - x_{\rm TTLS} +  {(\hat{\mathbf{V}}_{11}^T)}^{\dag}\hat{\mathbf{V}}_{11}^T x_{\rm TTLS} - (\hat{\mathbf{V}}_{11}^T)^{\dag} e\|_{2} \\
& \leq \|(\hat{\mathbf{V}}_{11}^T)^{\dag}\|_{2} \|\check{E}^T\|_F  \|x_{\rm TTLS}\|_{2} + \|x_{\rm TTLS}\|_{2} + \|{(\hat{\mathbf{V}}_{11}^T)}^{\dag}\|_{2} \|\hat{\mathbf{V}}_{11}^T\|_{2}  \|x_{\rm TTLS}\|_2 + \|(\hat{\mathbf{V}}_{11}^T)^{\dag}\|_{2}  \|e\|_{2} \\
& \leq  \frac{\|\check{E}\|_F}{\tau_{d} - \epsilon_v}  \|x_{\rm TTLS}\|_{2} + \|x_{\rm TTLS}\|_{2}  +  \frac{1 +  \epsilon_v}{\tau_{d} -  \epsilon_v} \|x_{\rm TTLS}\|_{2} + \frac{\|e\|_{2}}{\tau_{d} - \epsilon_v} \\
& =  \frac{\|\check{E}\|_F + 1 + \tau_{d}}{\tau_{d} - \epsilon_v}  \|x_{\rm TTLS}\|_{2} + \frac{\|e\|_{2}}{\tau_{d} -  \epsilon_v} \\
& \leq \sqrt{ \left[\left(\frac{\|\check{E}\|_F + \tau_{d} + 1}{\tau_{d} - \epsilon_v}\right)^2 + \left(\frac{\|e\|_{2}}{\tau_{d} -  \epsilon_v}\right)^2 \right] (\|x_{\rm TTLS}\|^2_{2} + 1)} \\
& \leq \frac{\|\check{E}\|_F + \tau_{d} + 1 + \|e\|_{2}}{\tau_{d} -  \epsilon_v} \sqrt{\|x_{\rm TTLS}\|^2_{2} + 1}\\
& \leq \frac{\sqrt{2} \epsilon_{v} + \tau_{d} + 1}{\tau_{d} - \epsilon_v} \sqrt{\|x_{\rm TTLS}\|^2_{2} + 1}. \ \ \ \ ({\rm using}\, {\rm the}\, {\rm result}\,{\rm of}\,{\rm Eq}.\eqref{QI:TTLS:Eq:Eekv}) \\
    \end{aligned}
\end{equation}

Using the result of theorem 4.1 in \cite{MSW92}, the Eq.\eqref{QI:TTLS:Eq:x2qx} can be rewritten as
\begin{equation}\label{QI:TTLS:Eq:x2qx1}
    \begin{aligned}
\frac{\|x_{\rm TTLS} - x_{\rm QiTTLS}\|_{2}}{\|x_{\rm TTLS}\|_{2}}
& \leq \frac{(\sqrt{2} \epsilon_{v} + \tau_{d} + 1 )}{\tau_{d} -  \epsilon_v}  \frac{\sqrt{\|x_{\rm TTLS}\|^2_{2} + 1}}{\|x_{\rm TTLS}\|_{2}}\\
& = \frac{(\sqrt{2} \epsilon_{v} + \tau_{d} + 1 )}{\tau_{d} -  \epsilon_v}  \frac{1}{\|\mathbf{v}_{21}\|_{2}} \\
& \leq \frac{(\sqrt{2} \epsilon_{v} + \tau_{d} + 1 )}{\tau_{d} - \epsilon_v}  \frac{2 \sigma_1}{\|b\|_{2} - \sigma_{d+1}}.
    \end{aligned}
\end{equation}
\end{proof}

Next, we will briefly describe the computational complexity of each step in our QiTTLS algorithm. The cost of each step of QiTTLS algorithm is listed as follows.

\begin{enumerate}[(1)]
\item QiSVD algorithm
\begin{itemize}
  \item In step \ref{QI:TLS:QiTTLS:al:s0}, the time of solving parameters $\xi$, $\alpha$, $\theta$ and $p$ can be neglected.
  \item In step \ref{QI:TLS:QiTTLS:al:s1}-\ref{QI:TLS:QiTTLS:al:s2}, based on the data structure, we samples $p$ row indies, each sampling takes no more than $O ({\rm log}\, m)$ flops.
  \item In step \ref{QI:TLS:QiTTLS:al:s3}-\ref{QI:TLS:QiTTLS:al:s4}, similarly, we need to sample $p$ column indies, each sampling takes no more than $O ({\rm log}\, n)$ flops.
  \item In step \ref{QI:TLS:QiTTLS:al:s5}, there is the SVD of $W \in \mathbb{R}^{p \times p}$, which costs about $O(p^3)$ flops.
  \item In step \ref{QI:TLS:QiTTLS:al:s6}, it needs about $O(p)$ flops.
  \item In step \ref{QI:TLS:QiTTLS:al:s7}, Calculate the approximate right singular values $\hat{V} = S^{T} \bar{U} \bar{\Sigma}^{-1} \in \mathbb{R}^{(n+1) \times l}$, which costs about $O(npl)$ flops.
\end{itemize}
\item The classical SVD based on R-bidiagonalization (\emph{R}-SVD) \cite{GL13}

We use the block form of the approximate singular vector $\hat{V}$ to form the solution, i.e.,$x_{\rm QiTTLS} = (\hat{\mathbf{V}}_{11}^T)^{\dag} \hat{\mathbf{v}}_{21}^{T}$. The Moore-Penrose inverse via \emph{R}-SVD \cite{GL13} needs about $2nd^2 + 11d^3$ flops.
\end{enumerate}
Above all, since $p  = \left\lceil\frac{1}{\theta^2 \delta}\right\rceil$, $\theta = \alpha \xi$, $\alpha = \frac{\xi}{16 k^4}$ and $\xi = \frac{\epsilon}{2\epsilon + 4}$, we have $p = \left\lceil\frac{16^2 k^8 (2\epsilon+4)^4}{\epsilon^4 \delta}\right\rceil$. The total computational complexity of QiTTLS algorithm is the following
\begin{equation}\label{QI:TTLS:Eq:time3}
    \begin{aligned}
    & \ \ O(p {\rm log}\, m + p {\rm log}\, n + p^3 + npl + nd^2 + d^3)\\
    & = O\left(\frac{k^{8}}{\epsilon^{4} \delta} {\rm log}\, m + \frac{k^{16}}{\epsilon^{8} \delta^2} {\rm log}\, n + \frac{k^{24}}{\epsilon^{12} \delta^3} + \frac{k^{8}}{\epsilon^{4} \delta} nl + nd^2 + d^3 \right).
    \end{aligned}
\end{equation}

\section{Numerical experiments}\label{sec:QI:TLS:experiments}
In this section, we give several numerical examples to illustrate that the performance of the QiTTLS algorithm is as accurate as the classical methods, and make a comparison with the RTTLS algorithm in \cite{XXW19}. The numerical tests are performed on a laptop with Intel Core i5 by MATLAB R2016(a) with a machine precision of $10^{-16}$.

\begin{Example}\label{QI:TTLS:Exp1}
The ill-conditioned cases are taken from Hansen's regularization tools \cite{PCH07}. All the problems are derived from discretizations of Fredholm integral equations of the first kind with a square integrable kernel \cite{CWG84}
\begin{equation}\label{QI:TTLS:Numerical:Eq:fab}
    \int_{a}^{b} K(s,t)f(t)dt = g(s), \ \ c \leq s \leq d,
\end{equation}
where the right-hand side $g$ and the kernel $K$ are given, and $f$ is the unknown solution. There are two kinds of discretization methods, namely orthonormal basis function method and Galerkin method. Here we select the examples of foxgood, gravity, heat, phillips, baart, deriv2 as shown in Table \ref{QI:TLS:tab:8exm}. The decay trend of the corresponding singular values is shown in Figure \ref{fig:Dsigma}.
\end{Example}

\begin{table}[!htb]
    \caption{The 6 testing matrices in Hansen's regularization tools.}\label{QI:TLS:tab:8exm}
    \centering
	\begin{tabular}{l  l}
		\hline
		Matrix 	& Description(\cite{PCH07}) \\
		\hline
		Foxgood 	& Severely ill-posed test problem  \\
		Gravity 	& One-dimensional gravity surveying problem  \\
		Heat 	    & Inverse heat equation  \\
		Philips     & Phillips' famous test problem \\
		Baart 	    & Discretizations of the first kind Fredholm integral equations   \\
		Deriv2      & Computational of the second derivative \\
		\hline
	\end{tabular}
\end{table}

\begin{figure}[h]
	\centering
	\includegraphics[width=0.55\textwidth]{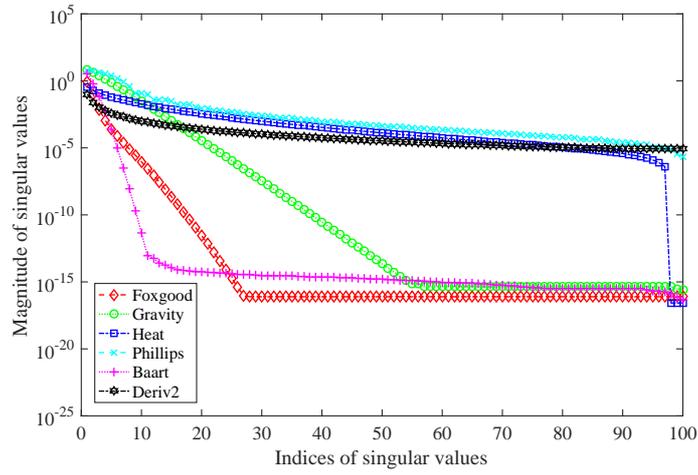}
	\caption{The decaying trends of singular values of the 6 testing problems. }
	\label{fig:Dsigma}
\end{figure}

For example, the above case Foxgood is generated by the MATLAB command $[\tilde{A}, \tilde{b}] = {\rm Foxgood}(m)$, where $m$ is the matrix size. Followed by the work in \cite{XZ13,XZ15,XXW19}, the observation data $A$ and $b$ are generated from the exact data $\tilde{A}$ and $\tilde{b}$ by adding noise $\eta$, respectively.
\begin{equation}\label{QI:TTLS:Numerical:Eq:AEb}
    A = \tilde{A} + \eta \|\tilde{A}\|_F \frac{G}{\|G\|_F}, \ \ b = \tilde{b} + \eta \|\tilde{b}\|_2 \frac{\zeta}{\|\zeta\|_2},
\end{equation}
where $G$ is a random matrix, i.e., $G = 2*{\rm rand}(m)-1$, and $\zeta$ is a random vector, i.e., $\zeta = 2*{\rm rand}(m,1)-1$ in MATLAB notations. Then, we will compute the TTLS solution of $Ax \approx b$ and compare it with the corresponding true solution $x_{\rm true}$ in the sense of time and accuracy.

\begin{figure}[H]
\begin{minipage}[t]{0.5\linewidth}
\centering
\includegraphics[width=2.8in]{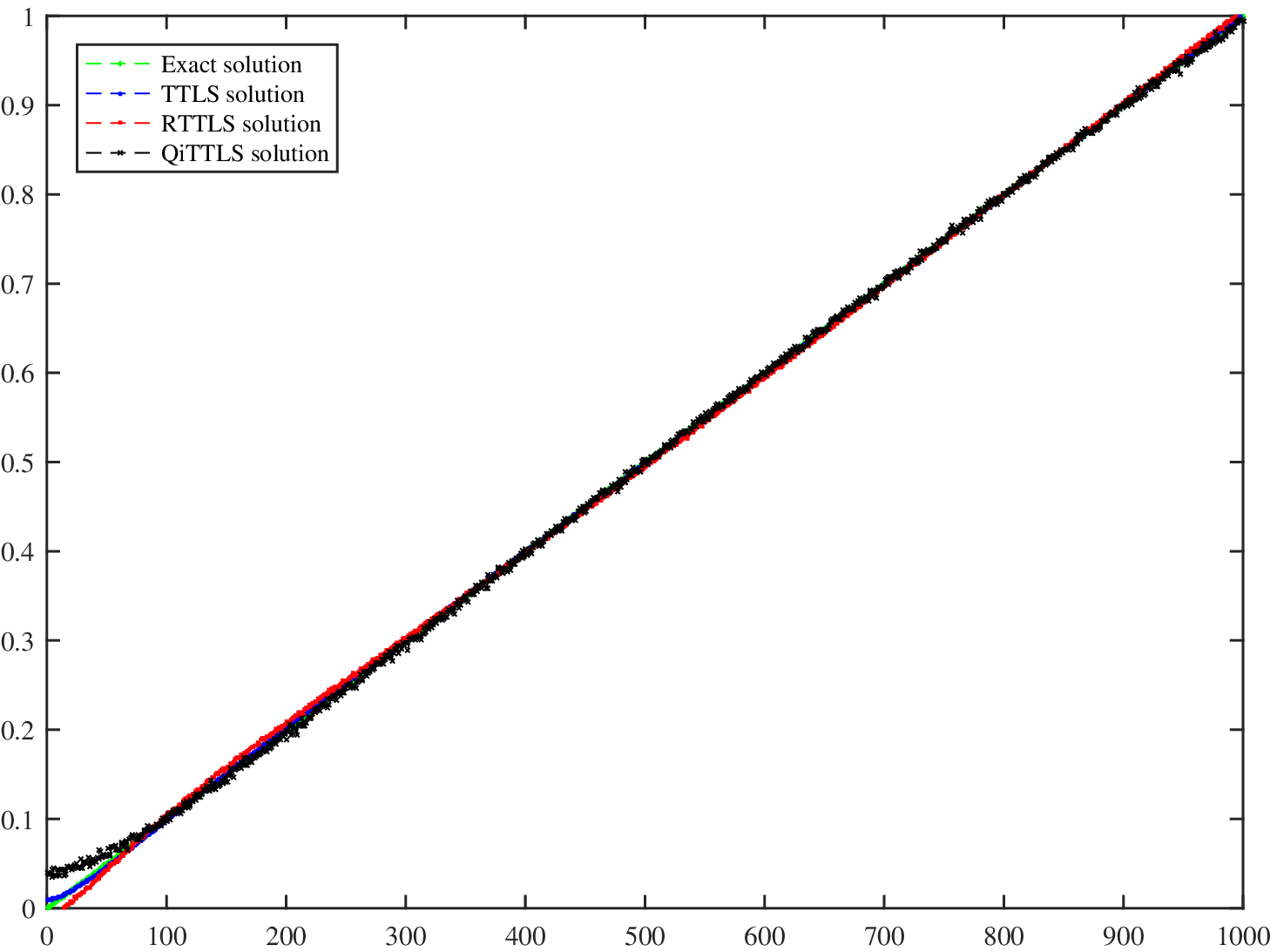}
\caption{Foxgood}
\label{fig:TTLS:fox}
\end{minipage}%
\begin{minipage}[t]{0.5\linewidth}
\centering
\includegraphics[width=2.8in]{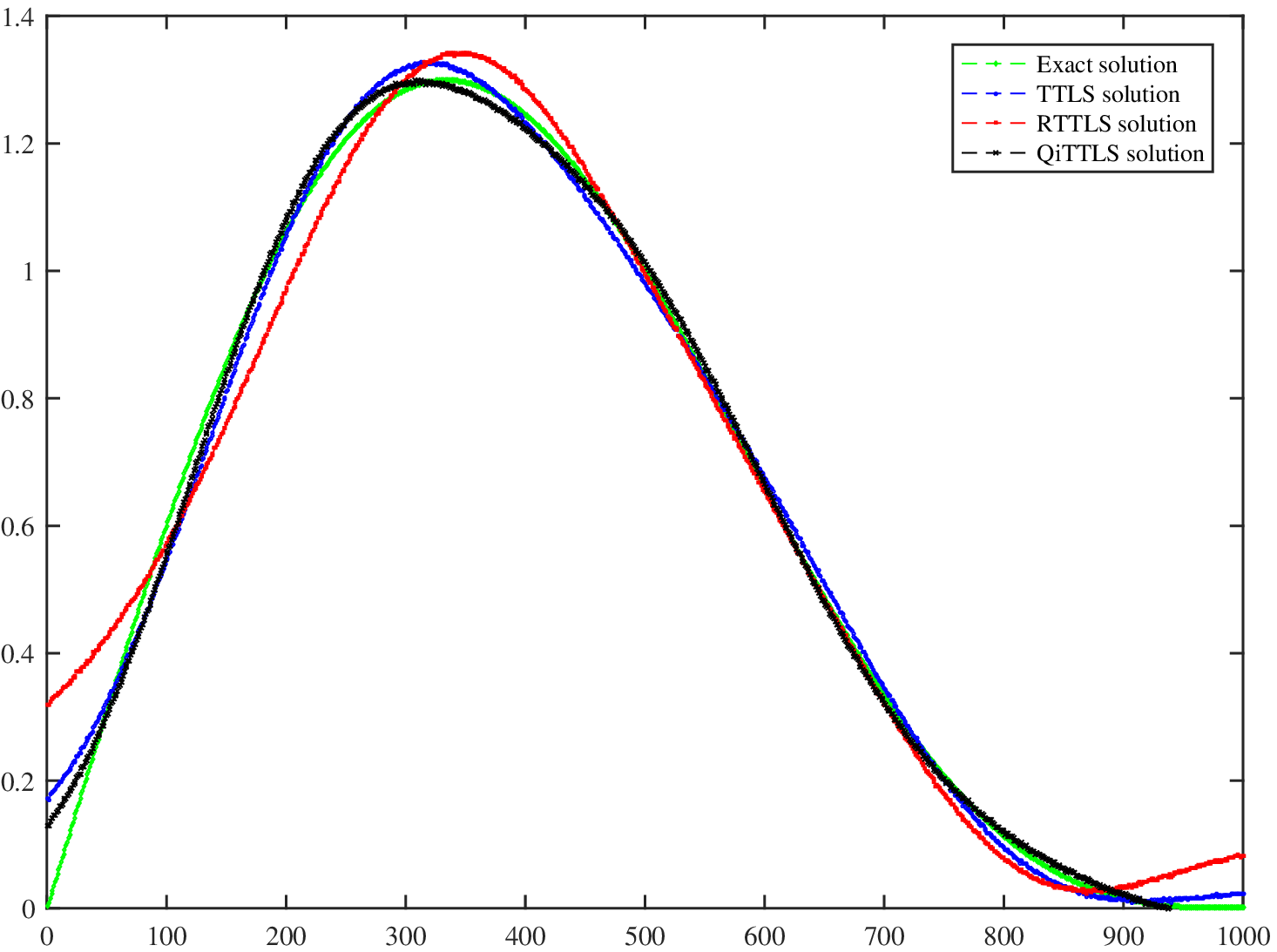}
\caption{Gravity}
\label{fig:TTLS:gra}
\end{minipage}
\end{figure}

\begin{figure}[H]
\begin{minipage}[t]{0.5\linewidth}
\centering
\includegraphics[width=2.8in]{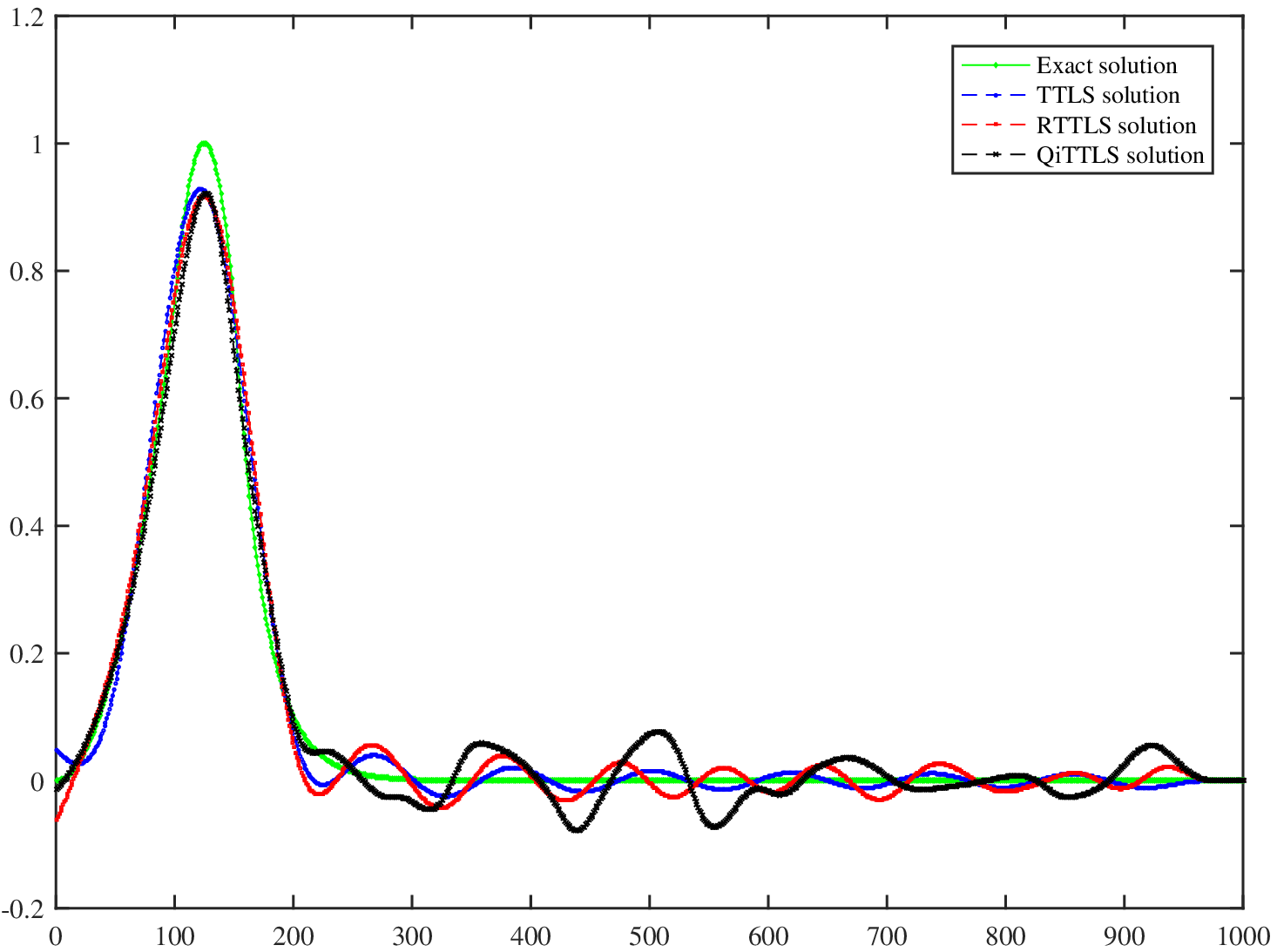}
\caption{Heat}
\label{fig:TTLS:heat}
\end{minipage}%
\begin{minipage}[t]{0.5\linewidth}
\centering
\includegraphics[width=2.8in]{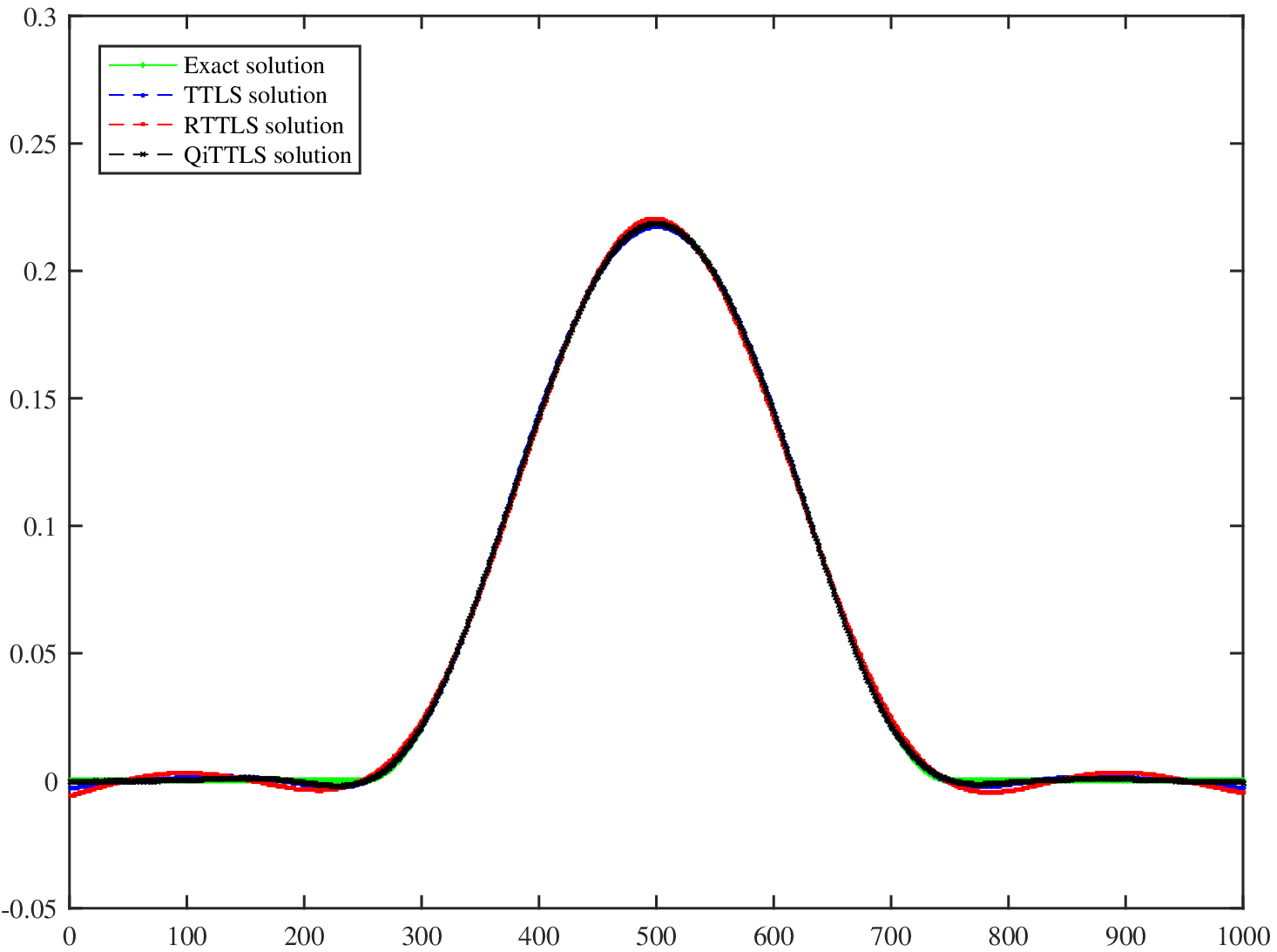}
\caption{Philips}
\label{fig:TTLS:phi}
\end{minipage}
\end{figure}

\begin{figure}[H]
\begin{minipage}[t]{0.5\linewidth}
\centering
\includegraphics[width=2.8in]{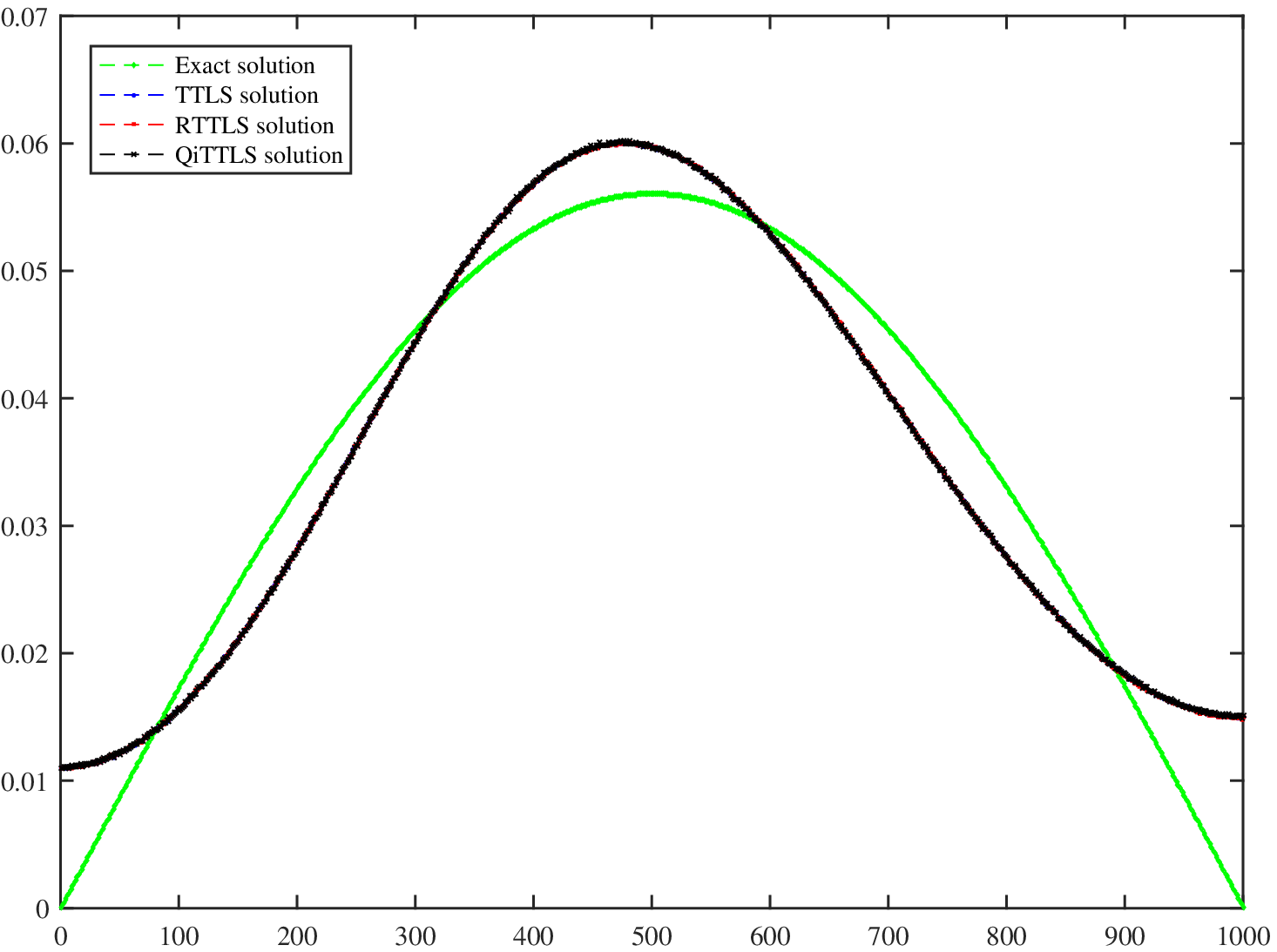}
\caption{Barrt}
\label{fig:TTLS:baat}
\end{minipage}%
\begin{minipage}[t]{0.5\linewidth}
\centering
\includegraphics[width=2.8in]{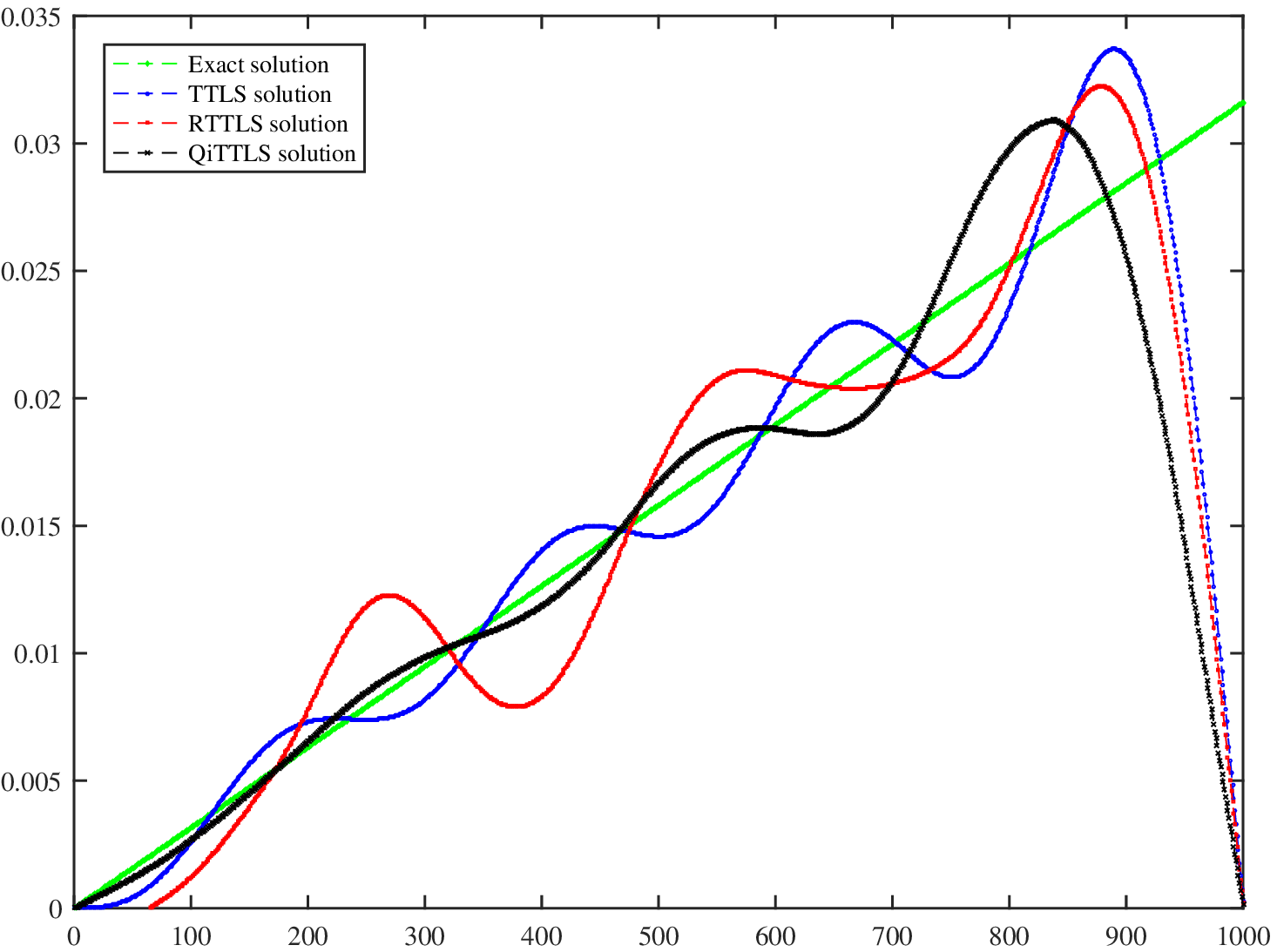}
\caption{Deriv2}
\label{fig:TTLS:deriv2}
\end{minipage}
\end{figure}

\begin{table}[!htb]
    \caption{Comparison with TTLS, RTTLS and QiTTLS algorithms of $A$ for $m = 1000$.}\label{QI:TTLS:tab:1comp}
    \centering
	\begin{tabular}{l c c c c c c c c c}
		\hline
		Matrix 	& $d$ & ${\rm Err}_{T}$ & ${\rm Time}_{T}$ & ${\rm Err}_{R}$ & ${\rm Time}_{R}$ & ${\rm Err}_{Qi}$ & ${\rm Time}_{Qi}$ \\
		\hline
		Foxgood 	& 4    & 1.830E-02  &  0.0345 & 3.330E-02  &  0.0270 & 1.985E-02  & 0.0224\\
		Gravity 	& 6    & 6.760E-02  &  0.0520 & 8.820E-02  &  0.0409 & 8.703E-02  & 0.0315\\
		Heat 	    & 15   & 9.370E-02  &  0.0920 & 1.226E-01  &  0.0583 & 9.351E-02  & 0.0487\\
		Philips     & 10   & 1.370E-02  &  0.0503 & 9.001E-03  &  0.0558 & 8.826E-03  & 0.0421\\
		Baart 	    & 4    & 2.654E-01  &  0.0493 & 2.642E-01  &  0.0271 & 2.614E-01  & 0.0258\\
		Deriv2      & 6    & 9.920E-01  &  0.0702 & 9.939E-01  &  0.0285 & 9.921E-01  & 0.0385\\
		\hline
	\end{tabular}
\end{table}

\begin{table}[!htb]
    \caption{Comparison with TTLS, RTTLS and QiTTLS algorithms of $A$ for $m = 4000$.}\label{QI:TTLS:tab:4comp}
    \centering
	\begin{tabular}{l c c c c c c c c c}
		\hline
		Matrix 	& $d$ & ${\rm Err}_{T}$ & ${\rm Time}_{T}$ & ${\rm Err}_{R}$ & ${\rm Time}_{R}$ & ${\rm Err}_{Qi}$ & ${\rm Time}_{Qi}$ \\
		\hline
		Foxgood 	& 5    & 1.390E-02  & 0.4133 & 8.410E-02  & 0.3113  & 7.792E-02  & 0.2554 \\
		Gravity 	& 6    & 1.361E-01  & 0.5127 & 1.717E-01  & 0.3304  & 1.610E-01  & 0.2917\\
		Heat 	    & 18   & 8.160E-02  & 0.8111 & 8.610E-02  & 0.8527  & 8.527E-02  & 0.5070\\
		Philips     & 11   & 1.370E-02  & 0.8291 & 1.850E-02  & 0.4614  & 1.763E-02  & 0.3789\\
		Baart 	    & 4    & 2.654E-01  & 0.3455 & 2.668E-01  & 0.3381  & 2.659E-01  & 0.3074\\
		Deriv2      & 6    & 9.980E-01  & 0.4495 & 9.987E-01  & 0.3976  & 9.981E-01  & 0.2936\\
		\hline
	\end{tabular}
\end{table}

For the ill-conditioned cases from the Hansen's Regularization Tools \cite{PCH07}, the numerical results of TTLS, RTTLS and QiTTLS algorithms are shown in Figures \ref{fig:TTLS:fox}-\ref{fig:TTLS:deriv2}. Since the true solutions of these problems are known in advance, we compute the relative errors by
\begin{equation*}
\begin{aligned}
{\rm Err}_{T} & = \| x_{\rm{TTLS}} -  x_{\rm{true}}\|_{\infty} / \| x_{\rm{true}}\|_{\infty},\\
{\rm Err}_{R} & = \| x_{\rm{RTTLS}} -  x_{\rm{true}}\|_{\infty} / \| x_{\rm{true}}\|_{\infty},\\
{\rm Err}_{Qi} & = \| x_{\rm{QiTTLS}} -  x_{\rm{true}}\|_{\infty} / \| x_{\rm{true}}\|_{\infty}.
\end{aligned}
\end{equation*}
Except for comparing ${\rm Err}_{T}$, ${\rm Err}_{R}$ and ${\rm Err}_{Qi}$, we also list the performances of TTLS, RTTLS and QiTTLS in terms of computing time in seconds (denoted by ${\rm Time}_{T}$, ${\rm Time}_{R}$ and ${\rm Time}_{Qi}$), where ${\rm Time}_{T}$, ${\rm Time}_{R}$ and ${\rm Time}_{Qi}$ are given by the MATLAB functions tic and toc. Besides, we know that the sampling parameters need to be chosen sufficiently large so that it can improve the performance. For simplicity, here we set the sampling parameter size $l = 20$ for all cases in the RTTLS algorthm. From the Tables
\ref{QI:TTLS:tab:1comp}-\ref{QI:TTLS:tab:4comp}, compared with TTLS and RTTLS algorithms, we obtain that QiTTLS algorithm can achieve a better accuracy and cost less time.

\begin{Example}\label{QI:TTLS:Exp2}
The TLS method is a promising approach in dealing with signal processing. In \cite{RY87}, Rahman and Yu propose a frequency estimation method using the TLS algorithm to solve linear prediction equations. Here we consider a set of linear prediction equations, which comes from the work of Majda et al. \cite{MSW89}. Suppose $a_{j} = [y_{j-1}, \cdots, y_{j + m - 2}]^{T}$, where $y_{l} = \sum_{j=1}^{p} \gamma _{j} z_{j}^{l}$, $z_{j} = e^{\lambda_{j}t}$, $j = 1, 2, \cdots, p$, the parameters $\gamma_{j}$ and $\lambda_{j}$ also need to be determined. Moreover, suppose $\gamma _{j}$ and $z_{j}$ are nonzeros, $z_j$ are different for $j = 1, 2, \cdots, p$. Denote $A_{n} = [a_{1}, a_{2}, \cdots, a_{n}]$, $b_{n} = - a_{n+1}$ and consider the following linear system
\begin{equation}\label{QI:TTLS:Numerical:Eq:AXB}
    A_{n} x = b_{n}.
\end{equation}
Assume that $m \geq n$, $m \geq p$. we know that ${\rm rank}(A_{n}) = {\rm min}\{n, p\}$. If $n \geq p$, then the linear system \eqref{QI:TTLS:Numerical:Eq:AXB} is compatible.
\end{Example}

\begin{table}[H]
    \caption{Six pairs of poles and residues \cite{MSW89}.}\label{QI:TTLS:tab:Ex2:cs}
    \centering
	\begin{tabular}{c c}
		\hline
		$\lambda_j$ & $\gamma_{j}$ \\
		\hline
		$-0.082 \pm 0.926i$ 	& 1  \\
		$-0.147 \pm 2.874i$ 	& 1  \\
		$-0.188 \pm 4.835i$ 	& 1  \\
        $-0.220 \pm 6.800i$	    & 1  \\
        $-0.247 \pm 8.767i$ 	& 1  \\
        $-0.270 \pm 10.733i$ 	& 1  \\
		\hline
	\end{tabular}
\end{table}

\begin{figure}[H]
	\centering
	\includegraphics[width=0.5\textwidth]{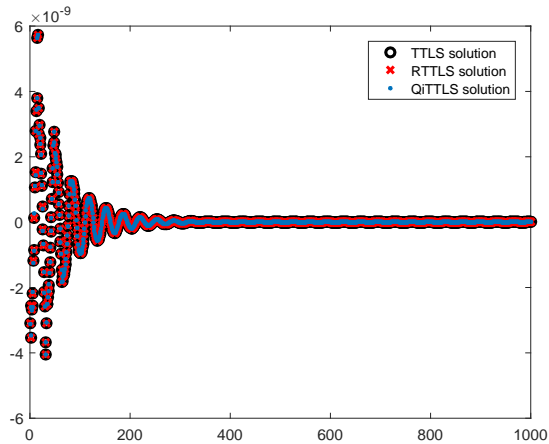}
	\caption{Computed RTTLS and QiTTLS solutions of Prony with $m = 1000$. }
	\label{fig:Prony}
\end{figure}

\begin{table}[H]
    \caption{Comparison with RTTLS and QiTTLS algorithms of $A$ for different $m$.}\label{QI:TTLS:tab:Ex2:2comp}
    \centering
	\begin{tabular}{c c c c c c c c c}
		\hline
		$m$ 	& $d$ & ${\rm Err}_{R}$ & ${\rm Time}_{R}$ & ${\rm Err}_{Qi}$ & ${\rm Time}_{Qi}$ \\
		\hline
		1000 	& 12 & 4.3408E-08 & 0.0364 & 6.9114E-10  & 0.0204 \\
		2000 	& 12 & 5.7792E-08 & 0.0422 & 1.0699E-08  & 0.0368 \\
		3000 	& 12 & 4.2830E-08 & 0.0468 & 7.9295E-09  & 0.0391 \\
        4000 	& 12 & 4.1737E-08 & 0.0697 & 7.7270E-09  & 0.0516 \\
		\hline
	\end{tabular}
\end{table}
For the Prony modeling example, in order to make a fair comparison with the RTTLS algorithm. Here the parameter settings are consistent with \cite{XXW19}, set the parameters $t = 0.2$, $d = 12$, $n = 1000$, with $\lambda_{j}$ and $\gamma_{j}$ as shown  in Table \ref{QI:TTLS:tab:Ex2:cs}. Since we do not know the exact solution of this problem, the relative errors are defined by
\begin{equation*}
    \begin{aligned}
    {\rm Err}_{R} & = \| x_{\rm{RTTLS}} - x_{\rm {TTLS}}\|_{\infty} / \|x_{\rm{TTLS}}\|_{\infty},\\
    {\rm Err}_{Qi} & = \|x_{\rm{QiTTLS}} - x_{\rm {TTLS}}\|_{\infty} / \|x_{\rm {TTLS}}\|_{\infty}.
    \end{aligned}
\end{equation*}
Except for comparing ${\rm Err}_{R}$ and ${\rm Err}_{Qi}$, we also list the performances of RTTLS and QiTTLS in terms of computing time in seconds (denoted by ${\rm Time}_{R}$ and ${\rm Time}_{Qi}$), where ${\rm Time}_{R}$ and ${\rm Time}_{Qi}$ are given by the MATLAB functions tic and toc. The results are given in Table \ref{QI:TTLS:tab:Ex2:2comp} and Figure \ref{fig:Prony}, we conclude that QiTTLS algorithm needs less time while maintaining the better accuracy.

\section{Conclusions}\label{sec:QI:TLS:conclusions}
In this paper, based on the sample model and data structure technique, we present a quantum-inspired total least squares algorithm for the large-scale ill-posed problem. It is a generalization of the method in \cite{FKV04,XXW19}. The proposed quantum-inspired truncated total least squares (QiTTLS) algorithm is essentially a randomized SVD algorithm. Next, we theoretically analyze the approximation accuracy and the computational complexity of our algorithm. Combined with numerical experiments, we show that the QiTTLS algorithm is competitive with the randomized TTLS algorithm. In the future, there are several improvements on our QiTTLS algorithm, such as choosing the other regularization parameter techniques, tightening the upper bounds in the theoretical analysis, and reducing the computational complexity.

\bibliographystyle{model1-num-names}

\begin{thebibliography}{00}

\bibitem{SVHJV91} S. Van Huffel and J. Vandewalle. {\it The Total Least Squares Problem: Computational Aspects and Analysis}. SIAM, Philadelphia, 1991.

\bibitem{GVL80} G. H. Golub and C. F. Van Loan. {\it An analysis of the total least squares problem}. SIAM Journal on Numerical Analysis. 6(1980), 883-893.

\bibitem{SVHPL02} S. Van Huffel and P. Lemmerling. {\it Total Least Squares and Errors-in-Variables Modeling, Analysis, Algorithms and Applications}. Springer-Science+Business Media, B.V., 2002.

\bibitem{BH91} B. Huang. {\it Detection of abrupt changes of total least squares models and application in fault detection}, IEEE Transactions on Control Systems Technology, 2(2001), 357-367.

\bibitem{SVH97} S. Van Huffel. {\it Recent advances in total least squares techniques and errors-in-variables modeling}. Proceedings of the 2nd International Workshop on Total Least Squares and Errors-in-Variables Modeling Held in Leuven, August 21-24, 1996, SIAM, Philadelphia, 1997.

\bibitem{SVH04} S. Van Huffel. {\it Total least squares and errors-in-variables modeling: bridging the gap between statistics, computational mathematics and engineering}. Proceedings in Computational Statistics. Physica, Heidelberg, 2004.

\bibitem{WMS92} M. Wei. {\it Algebraic relations between the total least squares and least squares problems with more than one solution}. Numerische Mathematik, 1(1992), 123-148.

\bibitem{BG11} M. Baboulin and S. Gratton.
{\it A contribution to the conditioning of the total least-squares problem}.
SIAM Journal on Matrix Analysis and Applications, 32(2011), 685-699.

\bibitem{LJ11} B. Li and Z. Jia.
{\it Some results on condition numbers of the scaled total least squares problem}.
Linear Algebra with Applications, 435(2011), 674-686.


\bibitem{XXW17} P. Xie, H. Xiang and Y. Wei.
{\it A contribution to perturbation analysis for total least squares problems}.
Numerical Algorithms, 75(2017), 381-395.

\bibitem{ZLWQ09} L. Zhou, L. Lin, Y. Wei and  S. Qiao.
{\it Perturbation analysis and condition numbers of scaled total least squares problems}.
Numerical Algorithms, 51(2009), 381-399.

\bibitem{CT09} X. Chang and D. Titley-Peloquin.
{\it Backward perturbation analysis for scaled total least squares problems}.
Numerical Linear Algebra with Applications, 16(2009), 627-648.


\bibitem{GTI13} S. Gratton, D. Titley-Peloquin and J. T. Ilunga.
{\it Sensitivity and conditioning of the truncated total least squares solution}.
SIAM Journal on Matrix Analysis and Applications, 34(2013), 1257-1276.


\bibitem{MDB21} Q. Meng, H. Diao and Z. Bai.
{\it Condition numbers for the truncated total least squares problem and their estimations}.
Numerical Linear Algebra with Applications, 28(2021), e2369.

\bibitem{JL13} Z. Jia and B. Li.
{\it On the condition number of the total least squares problem}.
Numerical Algorithms, 125(2013), 61-87.

\bibitem{ZY19} B. Zheng and Z. Yang.
{\it Perturbation analysis for mixed least squares-total least squares problems}.
Numerical Linear Algebra with Applications, 26(2019), e2239.

\bibitem{PS06} C. C. Paige and Z. Strako\v{s}.
{\it Core problems in linear algebraic systems}.
SIAM Journal on Matrix Analysis and Applications, 27(2006), 861-875.

\bibitem{HPSSV11} I. Hn\v{e}tynkov\'{a}, M. Ple\v{s}inger, D. M. Sima, Z. Strako\v{s} and S. Van Huffel.
{\it The total least squares problem in $AX \approx B$: a new classification with the relationship to the classical works}. SIAM Journal on Matrix Analysis and Applications, 3(2011), 748-770.

\bibitem{HPS13} I. Hn\v{e}tynkov\'{a}, M. Ple\v{s}inger and Z. Strako\v{s}.
{\it The core problem within a linear approximation problem $AX \approx B$ with multiple right-hand sides}. SIAM Journal on Matrix Analysis and Applications, 3(2013), 917-931.

\bibitem{ZMW17}
B.~Zheng, L.~Meng and Y.~Wei.
\newblock {\em Condition numbers of the multidimensional total least squares problem}.
\newblock  SIAM Journal on Matrix Analysis and Applications, 38(2017), 924--948.

\bibitem{HPSSV15} I. Hn\v{e}tynkov\'{a}, M. Ple\v{s}inger and Z. Strako\v{s}.
{\it Band generalization of the Golub-Kahan bidiagonalization, generalized Jacobi matrices, and the core problem}. SIAM Journal on Matrix Analysis and Applications, 2(2015), 417-434.

\bibitem{HPS16} I. Hn\v{e}tynkov\'{a}, M. Ple\v{s}inger and D. M. Sima.
{\it Solvability of the core problem with multiple right-hand sides in the TLS sense}. SIAM Journal on Matrix Analysis and Applications, 3(2016), 861-876.

\bibitem{MZW20} L. Meng, B. Zheng and Y. Wei.
{\it Condition numbers of the multidimensional total least squares problems having more than one solution}.
Numerical Algorithms, 84(2020), 887-908.

\bibitem{LJW22}
Q.~Liu, Z.~Jia and Y.~Wei.
\newblock {\em Multidimensional total least squares problem with linear equality constraints}.
\newblock  SIAM Journal on Matrix Analysis and Applications, 1(2022), 124-150.

\bibitem{LLW22}
Q.~Liu, C. Li and Y.~Wei.
\newblock {\em Condition numbers of multidimensional mixed least squares-total least squares problems}.
\newblock  Applied Numerical Mathematics, 178(2022), 52-68.

\bibitem{ZW22} P. Zhang and Q. Wang.
{\it Perturbation analysis and condition numbers of mixed least squares-scaled total least squares problem}.
Numerical Algorithms, 89(2022), 1223-1246.

\bibitem{GK65} G. H. Golub and W. Kahan.
{\it Calculating the singular values and pseudo-inverse of a matrix}. Journal of the Society for Industrial and Applied Mathematics. Series B, Numerical Analysis, 2(1965), 205-224.

\bibitem{SVH90} S. Van Huffel.
{\it Partial singular value decomposition algorithm}. Journal of Computational and Applied Mathematics, 33(1990), 105-112.

\bibitem{OS81} D. P. O'Leary and J. A. Simmons.
{\it A bidiagonalization-regularization procedure for large scale discretizations of ill-posed problems}. SIAM Journal on Scientific Computing, 4(1981), 474-489.

\bibitem{AB83} A. Bj\"{o}rck.
{\it A bidiagonalization algorithm for solving large and sparse ill-posed systems of linear equations}. BIT Numerical Mathematics, 3(1988), 659-670.

\bibitem{FGH97} R. D. Fierro, G. H. Golub, P. C. Hansen and D. P. O'Leary.
{\it Regularization by truncated total least squares}. SIAM Journal on Scientific Computing, 4(1997), 1223-1241.

\bibitem{GHO99} G. H. Golub, P. C. Hansen and D. P. O'Leary.
{\it Tikhonov regularization and total least squares}. SIAM Journal on Matrix Analysis and Applications, 21(1999), 185-194.

\bibitem{ZH22} H. Zare and M. Hajarian.
{\it An efficient Gauss-Newton algorithm for solving regularized total least squares problems}.
Numerical Algorithms, 89(2022), 1049-1073.

\bibitem{MSM14} X. Meng, M. A. Saunders and M. W. Mahoney.
{\it LSRN: a parallel iterative solver for strongly over- or underdetermined systems}. SIAM Journal on Scientific Computing, 2(2014), C95-C118.

\bibitem{RT08} V. Rokhlin and M. Tygert.
{\it A fast randomized algorithm for overdetermined linear least squares regression}. Proceedings of the National Academy of Sciences, USA, 36(2008), 13212-13217.

\bibitem{WLRT08} F. Woolfe, E. Liberty, V. Rokhlin and M. Tygert.
{\it A fast randomized algorithm for the approximation of matrices}. Applied and Computational Harmonic Analysis, 3(2008), 335-366.


\bibitem{AMT10} H. Avron, P. Maymounkov and S. Toledo.
{\it Blendenpik: Supercharging LAPACK's leastsquares solver}. SIAM Journal on Scientific Computing, 3(2010), 1217-1236.

\bibitem{MRT11} P.-G. Martinsson, V. Rokhlin and M. Tygert.
{\it A randomized algorithm for the decomposition of matrices}. Applied and Computational Harmonic Analysis, 1(2011), 47-68.

\bibitem{CRT11} E. Coakley, V. Rokhlin and M. Tygert.
{\it A fast randomized algorithm for orthogonal projection}. SIAM Journal on Scientific Computing, 2(2011), 849-868.

\bibitem{RR12} D. A. Rachkovskij and E. G. Revunova.
{\it A randomized method for solving discrete ill-posed problems}. Cybernetics and Systems Analysis, 4(2012), 621-635.

\bibitem{HMT11} N. Halko, P.-G. Martinsson and J. A. Tropp.
{\it Finding structure with randomness: Probabilistic algorithms for constructing approximate matrix decompositions}. SIAM Review, 2(2011), 217-288.

\bibitem{XZ13} H. Xiang and J. Zou.
{\it Regularization with randomized SVD for large-scale discrete inverse problems}. Inverse Problems, 8(2013), 085008.

\bibitem{XZ15} H. Xiang and J. Zou.
{\it Randomized algorithms for large-scale inverse problems with general Tikhonov regularizations}. Inverse Problems, 8(2015), 085008.

\bibitem{WXZ16}
Y.~Wei, P.~Xie and L.~Zhang.
\newblock {\em Tikhonov regularization and randomized GSVD}.
\newblock  SIAM Journal on Matrix Analysis and Applications, 2(2016), 649-675.

\bibitem{JY18} Z. Jia and Y. Yang.
{\it Modified truncated randomized singular value decomposition (MTRSVD) algorithms for large scale discrete ill-posed problems with general-form regularization}. Inverse Problems, 5(2018), 055013.

\bibitem{XXW19} P. Xie, H. Xiang  and  Y. Wei.
{\it Randomized algorithms for total least squares problems}. Numerical Linear Algebra with Applications, 26(2019), e2219.

\bibitem{ZW20} L. Zhang and Y. Wei.
{\it Randomized core reduction for discrete ill-posed problem}. Journal of Computational and Applied Mathematics, 375(2020), 112797.

\bibitem{WX19} H. F. Wang and H. Xiang.
{\it Quantum algorithm for total least squares data fitting}. Physics Letters A, 383(2019), 2235-2240.

\bibitem{GLM08} V. Giovannetti, S. Lloyd and L. Maccone.
{\it Quantum random access memory}. Physical Review Letters, 100(2008): 160501.

\bibitem{KP16} I. Kerenidis and A. Prakash.
{\it Quantum recommendation systems}. Proceedings of the 8th Innovations in Theoretical Computer Science Conference, 49(2017), 1-21.

\bibitem{ET18} E. Tang.
{\it A quantum-inspired classical algorithm for recommendation systems}. Proceedings of the 51st ACM Symposium on the Theory of Computing, 2019, 217-228.

\bibitem{FKV04} A. Frieze, R. Kannan and S. Vempala.
{\it Fast Monte-Carlo algorithms for finding low-rank approximations}. Journal of the ACM, 6(2004), 1025-1041.

\bibitem{CGLLTW20} N.-H. Chia, A. Gily\'{e}n, T. Y. Li, H.-H. Lin, E. Tang and C. H. Wang.
{\it Sampling-based sublinear low-rank matrix arithmetic framework for dequantizing quantum machine learning}. Proceedings of the 52nd Annual ACM SIGACT Symposium on Theory of Computing, 2020, 387-400.

\bibitem{JGS19} D. Jethwani,  F. L. Gall and  S. K. Singh.
{\it Quantum-inspired classical algorithms for singular value transformation}. arXiv:1910.05699, 2019.

\bibitem{CLS19} Z. Chen, Y. Li, X. Sun, P. Yuan and J. Zhang.
{\it A quantum-inspired classical algorithm for separable non-negative matrix factorization}. Proceedings of the 28th International Joint Conference on Artificial Intelligence, 2019, 4511-4517.

\bibitem{ET182} E. Tang.
{\it Quantum principal component analysis only achieves an exponential speedup because of its state preparation assumptions}. Physical Review Letters, 127(2021), 060503.

\bibitem{DBH21} C. Ding, T. Y. Bao and H. L. Huang.
{\it Quantum-inspired support vector machine}. IEEE Transactions on Neural Networks and Learning Systems, 99(2021), 1-13.

\bibitem{DHLT20} Y. X. Du, M.-H. Hsieh, T. L. Liu and D. C. Tao.
{\it Quantum-inspired algorithm for general minimum conical hull problems}. Physical Review Research, 2(2020), 033199.

\bibitem{KMKM21} N. Koide-Majima and  K. Majima.
{\it Quantum-inspired canonical correlation analysis for exponentially large dimensional data}. Neural Networks, 135(2021), 55-67.

\bibitem{ADBL20} J. M. Arrazola, A. Delgado, B. R. Bardhan and S. Lloyd.
{\it Quantum-inspired algorithms in practice}. Quantum, 4(2020), 307.
	
\bibitem{ZX21} Q. Zuo and H. Xiang.
{\it A quantum-inspired algorithm for approximating statistical leverage scores}. arXiv:2111.08915, 2021.

\bibitem{GL13} G. H. Golub and C. Van Loan.
{\it Matrix Computations}. 4th edition, Johns Hopkins University Press, Baltimore, MD, 2013.

\bibitem{HW12} H. Weyl.
{\it Das asymptotische verteilungsgesetz der eigenwerte linearer partieller differentialgleichungen (mit einer anwendung auf die theorie der hohlraumstrahlung)}. Mathematische Annalen, 4(1912), 441-479.

\bibitem{PDKM06} P. Drineas, R. Kannan and M. W. Mahoney.
{\it Fast Monte Carlo algorithms for matrices II: computing a low-rank approximation to a matrix}. SIAM Journal on Computing, 36(2006), 158-183.

\bibitem{GHW21} G. H. Golub, M. Heath and G. Wahba.
{\it Generalized cross-validation as a method for choosing a good ridge parameter}. Technometrics, 21(1979), 215-223.

\bibitem{SJG91} J.-G. Sun.
{\it Perturbation bounds for the Cholesky and QR factorizations}. BIT, 31(1991), 341-352.

\bibitem{PAW73} P. A. Wedin.
{\it Perturbation theory for pseudo-inverses}. BIT, 13(1973), 217-232.

\bibitem{MSW92} M. Wei.
{\it The analysis for the total least squares problem with more than one solution}. SIAM Journal on Matrix Analysis and Applications, 13(1992), 746-763.

\bibitem{PCH07} P. C. Hansen.
{\it Regularization Tools version 4.0 for Matlab 7.3}. Numerical Algorithm, 2(2007), 189-194.

\bibitem{CWG84} C. W. Groetsch.
{\it The Theory of Tikhonov Regularization for Fredholm Equations of the First Kind}. Pitman, Boston, 1984.

\bibitem{BRS08} C. Brezinski, G. Rodriguez and S. Seatzu.
{\it Error estimates for linear systems with applications to regularization}. Numerical Algorithms, 49(2008), 85-104.

\bibitem{RY87} M. D. Rahman and K.-B. Yu.
{\it Total least squares approach for frequency estimation using linear prediction}. IEEE Transactions on Acoustics Speech and Signal Processing, 10(1987), 1440-1454.

\bibitem{MSW89} G. Majda, W. A. Strauss and M. Wei.
{\it Computation of exponentials in transient data}. IEEE Transactions on Antennas and Propagation, 10(1989), 1284-1290.




\end{thebibliography}

\end{document}